\def\frak{\mathfrak}
\def\Bbb{\mathbb}
\def\Cal{\mathcal}
\let\phi\varphi
\newcommand{\x}{\times}
\renewcommand{\o}{\circ}
\newcommand{\al}{\alpha}
\newcommand{\be}{\beta}
\newcommand{\ga}{\gamma}
\newcommand{\ka}{\kappa}
\newcommand{\la}{\lambda}
\newcommand{\om}{\omega}
\newcommand{\ph}{\phi}
\newcommand{\ps}{\psi}
\renewcommand{\th}{\theta}
\newcommand{\si}{\sigma}
\newcommand{\ze}{\zeta}
\newcommand{\Ga}{\Gamma}
\newcommand{\La}{\Lambda}
\newcommand{\Ph}{\Phi}
\newcommand{\Ps}{\Psi}
\newcommand{\Om}{\Omega}
\newcommand{\tsum}{\textstyle\sum}
\newcommand{\Ad}{\operatorname{Ad}}
\newcommand{\Adb}{\operatorname{\underline{Ad}}}
\newcommand{\Adgr}{\operatorname{Ad_{\operatorname{gr}}}}
\newcommand{\hor}{\operatorname{hor}}
\newcommand{\im}{\operatorname{im}}
\newcommand{\gr}{\operatorname{gr}}
\newcommand{\id}{\operatorname{id}}
\newcommand{\pr}{\operatorname{pr}}
\newcommand{\ad}{\operatorname{ad}}
\newcommand{\Aut}{\operatorname{Aut}}
\newcommand{\tcn}{\widetilde{\mathcal N}}
\newcommand{\tcg}{\widetilde{\mathcal G}}
\newcounter{theorem}
\numberwithin{theorem}{section}
\numberwithin{equation}{section}
\newtheorem{thm}[theorem]{Theorem}
\newtheorem*{thm*}{Theorem \thesubsection}
\newtheorem{lemma}[theorem]{Lemma}
\newtheorem{prop}[theorem]{Proposition}
\newtheorem*{lemma*}{Lemma \thesubsection}
\newtheorem*{prop*}{Proposition \thesubsection}
\newtheorem*{cor*}{Corollary \thesubsection}
\theoremstyle{definition}
\newtheorem{definition}[theorem]{Definition}
\newtheorem*{definition*}{Definition \thesubsection}
\newtheorem{example}[theorem]{Example}
\newtheorem*{example*}{Example \thesubsection}
\theoremstyle{remark}
\newtheorem{remark}[theorem]{Remark}
\newtheorem*{remark*}{Remark \thesubsection}
\def\sideremark#1{\ifvmode\leavevmode\fi\vadjust{\vbox to0pt{\vss
 \hbox to 0pt{\hskip\hsize\hskip1em
 \vbox{\hsize3cm\tiny\raggedright\pretolerance10000
  \noindent #1\hfill}\hss}\vbox to8pt{\vfil}\vss}}}%
\begin{document}

\title{On canonical Cartan connections\\ associated to filtered
  G--structures} 

\author{Andreas \v Cap} 
\address{Faculty of Mathematics\\
University of Vienna\\
Oskar--Morgenstern--Platz 1\\
1090 Wien\\
Austria}
\email{Andreas.Cap@univie.ac.at}

\date{September 5, 2017}

\begin{abstract}
  A filtered manifold is a smooth manifold $M$ together with a
  filtration of the tangent bundle by smooth subbundles which is
  compatible with the Lie bracket of vector fields in a certain
  sense. The Lie bracket of vector fields then induces a bilinear
  operation on the associated graded of each tangent space of $M$
  making it into a nilpotent graded Lie algebra. Assuming that these
  symbol algebras are the same for all points, one obtains a natural
  frame bundle for the associated graded to the tangent bundle, and
  filtered G--structures are defined as reductions of structure group
  of this bundle. 

  Generalizing the case of parabolic geometries, this article is
  devoted to the question of whether a filtered G--structure of given
  type determines a canonical Cartan connection on an extended
  bundle. As for existence, the result are roughly as general as
  Morimoto's theorem from 1993, but it has several specific
  features. First, we allow for general candidates for a homogeneous
  model and a general version of normalization conditions. Second, the
  construction is entirely phrased in terms of Lie algebra valued
  forms and leads to an explicit characterization of the canonical
  Cartan connection. To verify that the procedure can be applied to a
  given type of filtered G--structures, only finite dimensional
  algebraic verifications have to be carried out. 
\end{abstract}
\subjclass[2010]{primary: 53B15, 53C10, 53C15 ; secondary: 53A40,
  53A55, 53C05, 53C17}
\keywords{canonical Cartan connection, Cartan geometry, filtered
  manifold, filtered G--structure}
\thanks{Supported by project P27072-N25 of the Austrian Science Fund
  (FWF) is gratefully acknowledged. I want to thank Boris Doubrov and
  Dennis The for very helpful discussions and for convincing me to
  write this article.}

\maketitle

\pagestyle{myheadings}\markboth{\v Cap}{Canonical Cartan connections}

\section{Introduction}\label{1}
Starting from E.\ Cartan's classical works in the early 20th century,
there is a long line of articles constructing canonical Cartan
connections associated to certain geometric structures. A Cartan
connection provides a description of the structure which is formally
similar to a certain homogeneous space called the \textit{homogeneous
  model} of the structure in question. Such constructions give rise to
a nice solution to the equivalence problem for the geometric structure
in question, since the curvature of a Cartan connection is known to be
a complete invariant. The classical constructions were often carried
out in the context of Cartan's method of equivalence, which gave them
a flavor of being difficult and involving extensive computations.

One of the structures dealt with by Cartan himself are strictly
pseudoconvex hypersurfaces in $\Bbb C^2$, for which a canonical Cartan
connection was constructed in \cite{Cartan:CR}. Generalizing this
result to higher dimensions was a hot topic in the late 1960's and
early 1970's, with the final results obtained (in the setting of
abstract CR structures) independently by N.~Tanaka in \cite{Tanaka:CR}
and by S.S.~Chern and J.~Moser in \cite{Chern-Moser}. While the
article by Chern and Moser was quickly considered as very important,
Tanaka's work received much less attention for a long time. (Of
course, it has to be mentioned here that \cite{Chern-Moser} does not
only contain the construction of a canonical Cartan connection, but
also deep and very influential results on normal forms for embedded CR
manifolds.)

Still, Tanaka's work has several very remarkable features. On the one
hand, it is more general, since only a very weak condition on
integrability of the CR structure is required. Moreover, Tanaka
noticed that the fact that the Lie algebra governing CR geometry is
simple is of crucial importance for the construction of a canonical
Cartan connection. Indeed, in the pioneering work
\cite{Tanaka:simple}, Tanaka showed that any parabolic subalgebra in a
simple Lie algebra determines a geometric structure for which
canonical Cartan connections can be constructed. The nature of these
results forced Tanaka's approach to be rather unusual in several
respects.

Most notably, the starting point for the construction was not provided
by the geometric structure to which one wants to associate a canonical
Cartan connection, but by the simple Lie algebra and parabolic subalgebra
describing the homogeneous model of the final Cartan geometry. Given a
parabolic subalgebra in a simple Lie algebra, there is a (reasonably
involved) description of an underlying geometric structure, to which
the procedure associates a canonical Cartan connection. Since simple
Lie algebras and parabolic subalgebras can be completely classified,
one ends up with a definite family of geometric structures for which
the procedure leads to canonical Cartan connections.

The geometric structures underlying the Cartan connections
corresponding to parabolic subalgebras were described in
\cite{Tanaka:simple} as standard G--structures satisfying certain
additional conditions. This makes them rather complicated to deal
with and even for simple examples like CR structures, the relation to
standard descriptions is not completely obvious. A big step towards a
simpler description of these structures was made in the works of
T.~Morimoto. His starting point was the concept of a \textit{filtered
  manifold}, i.e.~a smooth manifold $M$ endowed with a sequence of
nested smooth subbundles in the tangent bundle $TM$, which satisfy
certain (non--)integrability properties. It then turns out that the
Lie bracket of vector fields induces a bracket on the associated
graded vector space to each tangent space $T_xM$, making it into a
nilpotent graded Lie algebra, called the \textit{symbol algebra} of
the filtered manifold at the point $x$. These associated graded spaces
fit together to define a smooth vector bundle $\gr(TM)$ over $M$. Now
assume that for all points $x$, the symbol algebra is isomorphic to
fixed nilpotent graded Lie algebra $\frak m$. Then there is a natural
frame bundle for $\gr(TM)$ with structure group the group of
automorphisms of the graded Lie algebra $\frak m$. Similarly to the
classical case of G--structures, one can then consider reductions of
structure group of the natural frame bundle of $\gr(TM)$. Such
reductions are called \textit{filtered G--structures} and the
underlying structures obtained by Tanaka can be equivalently described
as such.

Morimoto initiated a comprehensive study of filtered manifolds, not
only from a geometric point of view, but also addressing topics like a
filtered version of prolongation of systems of differential equations
and, more generally, a filtered version of analysis. A substantial
body of results in that direction can be found in the long article
\cite{Morimoto:Cartan}. The last part of the article also contains a
general result on the existence of canonical Cartan connections
associated to filtered G--structures of finite type. This is embedded
into a general theory of (infinite) prolongations of geometric
structures using a non--commutative (semi--holonomic) version of frame
bundles. This prolongation procedure is used to determine the
homogeneous model of the geometry associated to a type of filtered
G--structures. Knowing this, there is an abstract definition of a
normalization condition needed to uniquely pin down the Cartan
connection. In view of this setup, applying Morimoto's result on
existence of canonical Cartan connections is a non--trivial task.

Starting from the 1980's, important developments in conformal geometry
gave new momentum to the theory. For example, the Fefferman--Graham
ambient metric from \cite{FG-amb} is motivated by CR geometry, and the
natural higher dimensional analog of (anti--)self--duality in
four--dimensional conformal geometry is provided by quaternionic
structures. Since all these structures admit canonical Cartan
connections this lead to renewed interest in Cartan geometries. The
developments in twistor theory and the Penrose transform as described
in the book \cite{Beastwood} opened up the perspective of studying
geometries related to all parabolic subalgebras in simple Lie
algebras. Initially being unaware of the results of Tanaka and
Morimoto, an independent procedure for constructing canonical Cartan
connections in this situation was found in \cite{Cap-Schichl}. This
initiated the general study of parabolic geometries, and after some
further developments the core of this theory was collected in
\cite{book}.

To construct a canonical Cartan connection one also has to extend the
principal bundle describing the underlying geometry to a bundle with a
larger structure group. In the constructions mentioned so far
(including the one in \cite{Cap-Schichl}) quite a lot of work goes
into the construction of this larger principal bundle. These
constructions are done in such a way that parts of the Cartan connection
can then be defined in a tautological way. On the other hand, simple
topological arguments show that in the parabolic case the principal
bundle on which the Cartan connection is defined has to be a trivial
extension of the bundle describing the filtered G--structure. 

Starting from this observation, yet another independent construction
of the canonical Cartan connections in the parabolic cases was given
in Section 3.1 of \cite{book} along the following lines. One directly
defines the Cartan bundle as a trivial extension of the bundle
describing the underlying structure. Next, one shows that there is a
Cartan connection on this extended bundle, which induces the
underlying geometric structure. This involves making choices, so it is
not canonical at all. Then one shows that any Cartan connection can be
modified to one that satisfies an appropriate normalization condition
without changing the underlying structure. Finally, one proves that a
normal Cartan geometry is uniquely determined up to isomorphism by the
underlying structure. In all that, the main role is played by the
algebraic properties of the normalization condition.

\medskip

The aim of this article is to generalize the construction of canonical
Cartan connections from \cite{book} beyond the parabolic case. The
main ingredient that is needed is a normalization condition with
appropriate algebraic properties. These properties are closely similar
to the so--called ``condition (C)'' from \cite{Morimoto:Cartan}. As
far as I can see (given the question of determining the group
governing the geometry as discussed above) this result should
essentially cover the same cases as Morimoto's result on Cartan
connections. However, there are some distinctive features of the
approach taken here:

\noindent
$\bullet$ Careful separation between geometry and algebra (point--wise
issues): In constructions of Cartan connections via the method of
equivalence, a typical part are step--by--step constructions of
adapted coframes. This is phrased in the language of differential
forms, but the conditions for a coframe to be adapted usually are
point--wise. Taking exterior derivatives exhibits consequences of the
conditions imposed so far, which then are used in the further steps of
the process. These frequent changes between geometry and point--wise
conditions often make it hard to understand what is going
on. Moreover, the point--wise conditions are often best expressed in
the language of linear algebra or of representation theory, which is
not as easy to use in the language of differential forms. In the
method of equivalence, this mix of geometric and point--wise
considerations is partly unavoidable, in particular if the process is
used to exhibit certain subclasses of geometries, which then require
separate treatment.

In this article, we carefully separate the geometric constructions
used to obtain canonical Cartan connections from the algebraic
background information needed in the construction. We work with a
uniform structure from the beginning. To show that the procedure
applies to a given type of geometric structure, only algebraic (finite
dimensional) verifications have to be carried out. Once these
verifications have been done, the universal constructions in this
article lead to canonical Cartan connections.

\noindent
$\bullet$ The starting point is a candidate for a homogeneous model
rather than a filtered geometric structure. Given a homogeneous space
$G/P$ we describe the algebraic data needed to obtain a $G$--invariant
filtered geometric structure on $G/P$. These can be phrased as a
filtration on the Lie algebra $\frak g$ of $G$ which has to be
compatible with the subgroup $P$ in a certain sense. One may then
forget about the group $G$ and just consider $\frak g$ and $P$. This
leads to the concept of an \textit{admissible pair}, see Definition
\ref{def2.2}. In particular, the filtration defines a closed subgroup
$P_+\subset P$ such that $G_0:=P/P_+$ is the structure group of the
underlying filtered geometric structure.

For an admissible pair $(\frak g,P)$, it turns out that any regular
Cartan connection of type $(\frak g,P)$ determines an underlying
filtered $G_0$--structure. If this structure determines a canonical
Cartan connection of type $(\frak g,P)$, then certainly $\frak g$ must
be the full Lie algebra of its infinitesimal automorphisms. This can
be phrased as the (purely algebraic) condition that the associated
graded Lie algebra $\gr(\frak g)$ is the full prolongation of its
non--positive part, see Definition \ref{def2.5}.

If one wants to start from the filtered $G_0$--structure instead
(i.e.~if no candidate for a homogeneous model is available), then it
is possible to build up such a candidate via Tanaka prolongation, see
Example (4) in Section \ref{2.6}. This however only determines the
associated graded Lie algebra to $\frak g$ and there may be several
possible choices of an admissible pair $(\frak g,P)$ inducing this
associated graded. For all these choices, the condition on the full
prolongation is then satisfied automatically. 

\noindent
$\bullet$ We use a general concept of normalization conditions which
extracts the essential properties that are needed. Imposing a
normalization condition on the curvature of the Cartan connection is
always necessary to ensure that the connection is uniquely determined
by the underlying filtered $G_0$--structure. For an admissible pair
$(\frak g,P)$ satisfying the condition on the full prolongation, a
good choice of normalization condition is the only additional
ingredient needed in order to get the machinery developed in this
article going. In the most general version, such a condition is
described by a linear subspace in a certain vector space, with
requirements detailed in Definition \ref{def-norm}. We show how such
normalization conditions can be obtained from a
\textit{codifferential}, but this is already a special case. Inner
products with certain invariance properties, which are the central
requirement in the construction of canonical Cartan connections in
\cite{Alekseevsky-David} can be used to construct codifferentials, but
only play an auxiliary role.

\noindent
$\bullet$ We obtain an explicit characterization of the canonical
Cartan connection, which leads to strong uniqueness results. For many
of the constructions available in the literature, uniqueness of
canonical Cartan connections follows from the naturality of the
construction used to obtain them. While this is a perfectly legitimate
argument, such an approach makes it difficult to compare the results
of different constructions of canonical Cartan connections. The
uniqueness results we prove here are of completely different nature. A
normalization condition of the form we use singles out a subspace in
the space of $\frak g$--valued two--forms on any Cartan geometry of
the given type. Such a geometry then is called \textit{normal} if the
curvature of the Cartan connection lies in this subspace. The basic
uniqueness result we prove in Theorem \ref{thm4.5} is that if two
normal regular Cartan geometries have isomorphic underlying filtered
$G_0$--structures, then they are themselves isomorphic. So to compare
to other constructions one just has to prove that these other
constructions lead to normal Cartan connections.

\noindent
$\bullet$ We develop a general concept of essential curvature
components. Having chosen a normalization condition, a general notion
of a \textit{negligible submodule} is given in Definition
\ref{def-neg}. Such a submodule defines a subspace in the space of
normal $\frak g$--valued two forms. We prove that projecting the
curvature to the quotient by this subspace one still obtains a
complete obstruction to local flatness. This generalizes the concept
of harmonic curvature used for parabolic geometries. In particular, we
show that a codifferential (in the sense of Definition
\ref{def-codiff}) automatically gives rise not only to a normalization
condition but also to a maximal negligible submodule.

\noindent
$\bullet$ The full construction is done in the language of $\frak
g$--valued forms on the Cartan bundle. This is in sharp contrast to
the construction in Section 3.1 of \cite{book} in which rather subtle
constructions with associated vector bundles play a crucial
role. Staying on the principal bundle, however, makes it necessary to
carefully distinguish between filtered objects and associated graded
objects on the level of the linear algebra background of the
construction. In particular, it will be important to carefully
distinguish between the filtered Lie algebra $\frak g$ and its
associated graded $\gr(\frak g)$, even though in many cases of
interest these happen to be isomorphic.

\medskip

The actual motivation for working out this general construction of
canonical Cartan connections was the joint article \cite{CDT} with
B.~Doubrov and D.~The, which uses Cartan connections canonically
associated to (systems of) ODEs. These applications depend on the
explicit characterization of normal Cartan geometries and the strong
algebraic properties or normalization conditions introduced here, which
are not available for earlier versions of canonical Cartan connections
associated to (systems of) ODEs. The algebras underlying these cases
are far from being parabolic, see part 3 of Example \ref{2.6}. The
construction of a normalization condition for this case is sketched in
part 3 of Example \ref{3.4}. The article \cite{CDT} contains the
complete algebraic verifications needed to apply the theory developed
here in this family of cases.

\smallskip 

It should be remarked here that many of the algebraic subtleties
described above can be avoided if one is willing to only construct an
absolute parallelism on an extended bundle rather than a Cartan
connection. This removes the necessity of requiring invariance or
equivariancy properties of normalization conditions, which may allow
the construction of canonical absolute parallelisms associated to
structures for which no canonical Cartan connections exist. General
constructions of such parallelisms again go back to work of Tanaka,
see \cite{Tanaka:prolon}. A simpler, complete construction for
filtered $G_0$--structures of finite type in modern language can be
found in \cite{Zelenko}. A canonical absolute parallelism still gives
rise to a complete set of invariants and thus a solution to the
equivalence problem. But already in this aspect, it seems much more
difficult to give a geometric interpretation of the resulting
invariants then in the case of a Cartan connection. Moreover, many
geometric tools are available for Cartan geometries (see
\cite{Sharpe}) or at least for the subclass of parabolic geometries
(see \cite{book}). While it seems plausible that many of the latter
tools can be generalized to larger classes of Cartan geometries, it
seems very hard to extend even basic geometric tools to the case of
absolute parallelisms. Thus, I believe that it is worthwhile to try to
obtain Cartan connections whenever possible.

\smallskip

To conclude this introduction, let us describe the contents of the
individual sections of the article. Section \ref{2} deals with the
algebraic ingredients needed to get an infinitesimal homogeneous model
for a filtered G--structure. We discuss the notion of an admissible
pair $(\frak g,P)$ and the condition that $\gr(\frak g)$ is the full
prolongation of of its non--positive part. We then define regular
Cartan geometries of type $(\frak g,P)$ and show in Theorem
\ref{thm2.4} that any such Cartan geometry determines an underlying
filtered $G_0$--structure, where $G_0=P/P_+$. In the end of the
Section, several examples are discussed in detail.

Section \ref{3} is devoted to the study of normalization conditions
and negligible submodules. Again, several examples are discussed in
the end of the section. In Section \ref{4} we start by setting up the
necessary background on $\frak g$--valued differential forms on
principal $P$--bundles and then define the covariant exterior
derivative on $\frak g$--valued forms induced by a Cartan
connection. Given a normalization condition, we define normal Cartan
connections. If we have also given a negligible submodule, we define
essential curvature and prove in Proposition \ref{prop4.3} that
vanishing of the essential curvature implies vanishing of the
curvature.

The first crucial result is Theorem \ref{thm4.4} on normalizing Cartan
connections. This only needs an appropriate normalization condition
for an admissible pair $(\frak g,P)$. Given a regular Cartan
connection $\om$ on a principal $P$--bundle $\Cal G\to M$, we show
that there is a regular normal Cartan connection $\hat\om$ on $\Cal
G$, which induces the same underlying filtered $G_0$--structure as
$\om$. The second crucial result is Theorem \ref{thm4.5}, which shows
that if two regular normal Cartan connections $\om$ and $\hat\om$ on a
principal $P$--bundle $\Cal G$ induce the same underlying structure,
then they are related by an automorphism of $\Cal G$ inducing the
identity on the underlying filtered $G_0$--structure.

To obtain a result on canonical Cartan connections associated to
filtered $G_0$--structures, one more ingredient is needed. This
concerns the algebraic and topological structure of the group $P$ and
is spelled out in Definition \ref{def4.6}. Assuming this condition, we
prove our final result in Theorem \ref{thm4.6}, namely that there is
an equivalence of categories between regular normal Cartan geometries
of type $(\frak g,P)$ and filtered $G_0$--structures. The key issue in
the proof is that the condition on $P$ suffices to show that the
bundle defining a filtered $G_0$--structure can always be extended to
a principal $P$--bundle. Making choices, one constructs a regular
Cartan connection on the extended bundle, which induces the given
filtered $G_0$--structure, and this can then be normalized. On the
other hand, the condition also implies that any morphism between the
underlying structures of two regular normal Cartan geometries lifts to
a morphism of the Cartan bundles, which then can be converted into a
morphism of Cartan geometries using the uniqueness result from Theorem
\ref{thm4.5}.

\section{Infinitesimal homogeneous models}\label{2}
We first recall the filtered version of a reduction of structure
group of the frame bundle, which, for compatibility with later
notation, we call a filtered $G_0$--structure. Next, we study the data
needed to define a $G$--invariant filtered $G_0$--structure on a
homogeneous space $G/P$ for an appropriate quotient group $G_0$ of
$P$. These data can all be phrased in terms of the Lie algebra $\frak
g$ of $G$ and the group $P$ only, which leads to the concept of an
admissible pair $(\frak g,P)$. Given such a pair, there is a natural
concept of Cartan geometry of type $(\frak g,P)$ and a notion of
regularity for such a geometry. We show that any regular Cartan
geometry of type $(\frak g,P)$ induces an underlying filtered
$G_0$--structure. The fact that for a homogeneous space $G/P$ as
above, the group $G$ is the full automorphism group of the
corresponding filtered $G_0$--structure has algebraic consequences,
which again can be phrased entirely in Lie algebraic terms. Since this
must evidently be the case if there is a canonical Cartan connection
of type $(\frak g,P)$ associated to filtered $G_0$--structures, this
gives rise to a necessary condition for existence such canonical
Cartan connections. We call an admissible pair $(\frak g,P)$ an
\textit{infinitesimal homogeneous model} for filtered
$G_0$--structures if this condition is satisfied.

\subsection{Filtered $G_0$--structures}\label{2.1} 
Recall that a \textit{filtered manifold} is a smooth manifold $M$
together with a filtration of the tangent bundle, which we write as
$$
TM=T^{-\mu}M\supset T^{-\mu+1}M\supset\dots\supset T^{-2}M\supset
T^{-1}M 
$$
such that for sections $\xi\in\Ga(T^iM)$ and $\eta\in\Ga(T^jM)$ the
Lie bracket $[\xi,\eta]$ is a section of $T^{i+j}M$. We call
$\mu\in\Bbb N$ the \textit{depth} of the filtration and follow the usual
convention that $T^{\ell} M=TM$ for all $\ell<-\mu$ and $T^\ell
M=M\x\{0\}$ for $\ell\geq 0$.

The \textit{associated graded} to the tangent bundle is then defined
as the bundle $\gr(TM)=\oplus_{i=-\mu}^{-1}\gr_i(TM)$, with
$\gr_i(TM)=T^iM/T^{i+1}M$. In particular, the fiber of $\gr_i(TM)$
over a point $x\in M$ simply is the quotient $T_x^iM/T_x^{i+1}M$, so
this is $\gr_i(T_xM)$. If necessary, we put $\gr_i(TM)=M\x\{0\}$ for
$i<-\mu$ and $i\geq 0$. Further, we denote by
$q_i(x):T^i_xM\to\gr_i(T_xM)$ the natural quotient map, so we get a
vector bundle map $q_i:T^iM\to\gr_i(TM)$.

Fix a point $x\in M$ and consider the operator
$\Ga(T^iM)\x\Ga(T^jM)\to\gr_{i+j}(T_xM)$ defined by $(\xi,\eta)\mapsto
q_{i+j}([\xi,\eta](x))$, which is well defined by definition of a
filtered manifold. Since $i,j\geq i+j+1$ a short computation using the
definition of a filtered manifold once more shows that
$q_{i+j}([\xi,\eta](x))$ depends only on the values of $\xi$ and
$\eta$ at $x$ and in fact only on their classes in $\gr_i(T_xM)$ and
$\gr_j(T_xM)$, respectively. Hence we get a well defined bilinear map
$\gr_i(T_xM)\x\gr_j(T_xM)\to\gr_{i+j}(T_xM)$. Collecting these maps
for different values of $i$ and $j$, we obtain a bilinear map $\Cal
L_x:\gr(T_xM)\x\gr(T_xM)\to\gr(T_xM)$, called the \textit{Levi
  bracket} at $x$. The properties of the Lie bracket of vector fields
readily imply that this operation makes $\gr(T_xM)$ into a graded
Lie algebra, which has to be nilpotent since the grading has finite
length. This is called the \textit{symbol algebra} of the filtered
manifold at $x$. Of course, we can collect the Levi brackets at the
individual points into a bilinear bundle map $\Cal
L:\gr(TM)\x\gr(TM)\to\gr(TM)$, the \textit{Levi bracket}.

A standing assumption on filtered manifolds that we will make is that
they are of constant type, i.e.~that the symbol algebras at all points
are isomorphic. More precisely, we require that the symbol algebras
form a locally trivial bundle of graded Lie algebras modelled on a
fixed nilpotent graded Lie algebra $\frak m=\oplus_{i=-\mu}^{-1}\frak
m_i$. This means that for each $x\in M$, we find an open neighborhood
$U\subset M$ of $x$ and local trivializations $\gr_i(TM)|_U\to
U\x\frak m_i$ for each $i=-\mu,\dots,-1$ such that for all $y\in U$
the corresponding isomorphisms $\ph_i:\gr_i(T_yM)\to\frak m_i$ have
the property that $\ph_{i+j}(\Cal L_y(u,v))=[\ph_i(u),\ph_j(v)]$ for
all $i$ and $j$, all $u\in\gr_i(T_yM)$ and $v\in\gr_j(T_yM)$. We then
say that the filtered manifold $(M,\{T^iM\})$ is \textit{regular of
  type $\frak m$}.

Now consider the group $GL(\frak m)$ of linear automorphisms of $\frak
m$. The group $\Aut_{\gr}(\frak m)$ of automorphisms of the Lie algebra
$\frak m$, which in addition preserve the grading of $\frak m$, clearly
is a closed subgroup in $GL(\frak m)$ and thus a Lie group. It is a
well known fact from Lie theory that the Lie algebra of this group is
$\frak{der}_{\gr}(\frak m)$, the space of all linear maps $\al:\frak
m\to\frak m$ which preserve the grading and are derivations in the
sense that they satisfy $\al([X,Y])=[\al(X),Y]+[X,\al(Y)]$ for all
$X,Y\in\frak m$.

\begin{prop}\label{prop2.1}
  Let $\frak m=\oplus_{i=-\mu}^{-1}\frak m_i$ be a finite dimensional
  nilpotent graded Lie algebra. Then for any filtered manifold
  $(M,\{T^iM\}_{i=-\mu}^{-1})$ that is regular of type $\frak m$, the
  bundle $\gr(TM)$ admits a canonical frame bundle $\Cal PM$ that is a
  principal bundle with structure group $\Aut_{\gr}(\frak m)$.
\end{prop}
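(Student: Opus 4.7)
\medskip

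\textbf{Plan of proof.} The natural candidate is the set of graded Lie algebra isomorphisms from the model. For each $x\in M$ I would let $\Cal P_xM$ be the set of all bijective linear maps $\ph:\frak m\to\gr(T_xM)$ which preserve the gradings and intertwine the bracket of $\frak m$ with the Levi bracket $\Cal L_x$. Put $\Cal PM:=\bigsqcup_{x\in M}\Cal P_xM$ with the obvious projection to $M$, and define a right action of $\Aut_{\gr}(\frak m)$ by $\ph\cdot g:=\ph\o g$. Since any element of $\Cal P_xM$ is a graded Lie algebra isomorphism, the composition of inverses $\ps^{-1}\o\ph$ with $\ph,\ps\in\Cal P_xM$ lies in $\Aut_{\gr}(\frak m)$, so the action is free and transitive on each fiber; the algebraic structure of a principal $\Aut_{\gr}(\frak m)$--bundle is thus forced on $\Cal PM$.

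The substantive point is to upgrade this set--theoretic description to a smooth principal bundle, and this is exactly what the assumption of regularity of type $\frak m$ buys us. Given $x\in M$, pick a neighborhood $U$ and trivializations $\ph_i:\gr_i(TM)|_U\to U\x\frak m_i$ of the kind appearing in the definition of regularity. For each $y\in U$, the inverses $(\ph_i|_y)^{-1}$ assemble to a map $\frak m\to\gr(T_yM)$ which, by the compatibility condition $\ph_{i+j}(\Cal L_y(u,v))=[\ph_i(u),\ph_j(v)]$, lies in $\Cal P_yM$. Call this element $s(y)$. This gives a map $s:U\to\Cal PM$ covering the identity, and we use it to define a bijection
$$
\Ps:U\x\Aut_{\gr}(\frak m)\to\Cal PM|_U,\qquad (y,g)\mapsto s(y)\o g.
$$
Declaring all such $\Ps$ to be diffeomorphisms defines the smooth structure on $\Cal PM$.

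The main obstacle, and the only nontrivial verification, is that this smooth structure is well defined, i.e.~that the transition functions between two such trivializations are smooth maps into $\Aut_{\gr}(\frak m)$. If $s,\tilde s$ arise from two choices of trivializations $\ph_i,\tilde\ph_i$ on overlapping opens, then on the overlap $\tilde s(y)^{-1}\o s(y)=\tilde\ph\o\ph^{-1}|_y$, which is the transition function of the smooth vector bundle $\gr(TM)$ assembled from the $\gr_i(TM)$. This is smooth as a map into $GL(\frak m)$, and it takes values in $\Aut_{\gr}(\frak m)$ because both $s(y)$ and $\tilde s(y)$ preserve gradings and the Levi bracket; hence the transition is smooth into the closed subgroup $\Aut_{\gr}(\frak m)$. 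Smoothness of the right action and of the projection are then immediate from the definition of $\Ps$, so $\Cal PM\to M$ is a principal $\Aut_{\gr}(\frak m)$--bundle.

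Finally, I would note that $\Cal PM$ genuinely is a frame bundle for $\gr(TM)$: the associated bundle $\Cal PM\x_{\Aut_{\gr}(\frak m)}\frak m$ is naturally isomorphic to $\gr(TM)$ via $[\ph,X]\mapsto\ph(X)$, which is well defined by equivariance and is a fiberwise linear isomorphism because each $\ph\in\Cal P_xM$ is. Canonicity is evident from the construction, which uses only the intrinsic datum $(\gr(TM),\Cal L)$ and the model $\frak m$.
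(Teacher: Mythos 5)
Your proposal is correct and follows essentially the same route as the paper: define $\Cal P_xM$ as the set of graded Lie algebra isomorphisms $\frak m\to\gr(T_xM)$, take the disjoint union, and use the local trivializations from the definition of regularity of type $\frak m$ to obtain local bundle charts. You simply spell out the transition-function check and the identification $\Cal PM\x_{\Aut_{\gr}(\frak m)}\frak m\cong\gr(TM)$ that the paper dismisses as routine.
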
 
\begin{proof}
  Given $x\in M$, one defines $\Cal P_xM$ to be the set of all
  isomorphisms $\frak m\to\gr(T_xM)$ of graded Lie algebras. Then one
  defines $\Cal PM$ to be the disjoint union of the $\Cal P_xM$,
  endowed with the obvious projection $p:\Cal PM\to M$. Fixing one
  element of $\Cal P_x$ and composing it from the right by elements
  from $\Aut_{\gr}(\frak m)$ identifies $\Aut_{\gr}(\frak m)$ with $\Cal
  P_xM$. Taking an open subset $U$ as in the definition of constant
  type and doing this construction in each point, one obtains a
  bijection $p^{-1}(U)\to U\x\Aut_{\gr}(\frak m)$. It is now routine to
  use this to define a topology on $\Cal PM$ and make it into a
  principal bundle over $M$.
\end{proof}

The bundle $\Cal PM$ is the perfect analog of the linear frame bundle
of a smooth manifold in the setting of filtered manifolds of constant
type. Indeed, the linear frame bundle occurs as a special case. If one
takes the filtration of $M$ to be trivial, i.e.~$\mu=1$ and
$T^{-1}M=TM$, then one just gets a smooth manifold and this is regular
of type the abelian Lie algebra $\Bbb R^n$ with trivial grading. Of
course, the construction from Proposition \ref{prop2.1} then just
recovers the usual linear frame bundle with structure group $GL(n,\Bbb
R)$. Hence the following definition generalizes the usual concept of
G--structures.

\begin{definition}\label{def2.1} 
  Fix a nilpotent graded Lie algebra $\frak m$ and let $G_0$ be a Lie
  group endowed with a fixed infinitesimally injective homomorphism
  $\be:G_0\to\Aut_{\gr}(\frak m)$. Then a \textit{filtered
    $G_0$--structure} over a filtered manifold $M$ which is regular of
  type $\frak m$ is a reduction of structure group of the natural
  frame bundle $\Cal PM$ for $\gr(TM)$ to the group $G_0$. More
  explicitly, this is given by a principal $G_0$--bundle $\Cal G_0\to
  M$ and a smooth bundle map $\Ph:\Cal G_0\to\Cal PM$ that covers the
  identity on $M$ and is equivariant in the sense that $\Ph(u\cdot
  g)=\Ph(u)\cdot\be(g)$ for all $u\in\Cal G_0$ and $g\in G_0$.
\end{definition}

There is an obvious concept of morphisms in this setting. For a local
diffeomorphism $f$ between filtered manifolds $M$ and $\tilde M$,
which both are regular of type $\frak m$, there is an obvious concept
of being \textit{filtration preserving}. We just require that for each
point $x\in M$ the tangent map $T_xf:T_xM\to T_{f(x)}\tilde M$ is
compatible with the filtrations on two spaces. This implies that $T_x
f$ induces a linear isomorphism $\gr(T_xM)\to\gr(T_{f(x)}\tilde M)$
and it is easy to verify that this map is compatible with the
Levi--brackets. Hence there is an induced principal bundle map $\Cal
Pf:\Cal PM\to\Cal P\tilde M$ with base map $f$. Given a filtered
$G_0$--structures $\Cal G_0\to M$ defined by $\Ph:\Cal G_0\to\Cal PM$
and likewise for $\tilde M$, a morphism of filtered $G_0$--structures
is a principal bundle map $F:\Cal G_0\to\tcg_0$ such that $\tilde\Ph\o
F=\Cal Pf\o\Ph$ (which in particular implies that $F$ has base map
$f$). In the case that $G_0$ is a subgroup of $\Aut_{\gr}(\frak m)$, we
can view $\Cal G_0$ as a subbundle of $\Cal PM$, and we must have
$F=\Cal Pf|_{\Cal G_0}$, so the main condition is that $\Cal Pf(\Cal
G_0)\subset\tcg_0$.

\begin{remark}\label{rem2.1}
  (1) In most cases of interest, $G_0$ will simply be a subgroup of
  $\Aut_{\gr}(\frak m)$. The slightly more general setup is chosen to
  allow structures analogous to spin--structures in Riemannian
  geometry. In any case, infinitesimal injectivity implies that the
  derivative $\be'$ of $\be$ defines an isomorphism from $\frak g_0$
  onto a Lie subalgebra of $\mathfrak{der}_{\gr}(\frak m)$, so at least
  we will usually view $\frak g_0$ as a Lie subalgebra in there.

  (2) One gets even closer to the classical picture in the case that
  $\frak m$ is \textit{fundamental}, which means that it is generated
  by $\frak m_{-1}$ as a Lie algebra. This readily implies that any
  automorphism of the graded Lie algebra $\frak m$ is uniquely
  determined by its restriction to $\frak m_{-1}$. Consequently,
  $\Aut_{\gr}(\frak m)\subset GL(\frak m_{-1})$ and $\Cal PM$ can be
  viewed as a subbundle of the linear frame bundle of the vector
  bundle $T^{-1}M$. So a reduction $\Cal G_0\to\Cal PM$ can be
  interpreted as an additional reduction of structure group of
  $T^{-1}M$.

  (3) Similarly to the classical case, reductions of structure group
  can also be characterized by a filtered analog of a soldering
  form. To describe this, observe first that there is an induced
  filtration of $T\Cal G_0$. One simply defines $T^i\Cal G_0$ as the
  pre--image of $T^iM$ for $i<0$ and as the vertical subbundle for
  $i=0$. These subbundles are easily seen to be invariant under the
  principal right action. Now for each $i<0$, one obtains a
  ``differential form'' $\th_i$ which is only defined on the subbundle
  $T^i\Cal G_0$ and has values in $\frak m_i$, such that the
  point--wise kernel coincides with $T^{i+1}\Cal G_0$. Moreover these
  forms are equivariant for the principal right action and the
  representation of $G_0$ on $\frak m_i$ induced by the homomorphism
  $\be$. Conversely, one can construct a homomorphism to the frame
  bundle from such a family of partially defined forms, compare with
  Sections 3.1.6 and 3.1.7 of \cite{book}.
\end{remark}

\begin{example}\label{ex2.1} 
  (1) Being a filtered manifold which is regular of type $\frak m$ may
  already be an interesting geometric structure in its own right. So
  we can take the case $G_0=\Aut_{\gr}(\frak m)$ and $\Cal G_0=\Cal
  PM$ as a filtered $G_0$--structure. It is known from examples of
  parabolic geometries that such structures may already be of finite
  type and determine canonical Cartan geometries, see Section 4.3.1 of
  \cite{book}.

  Let us in particular mention that a standard way to get to filtered
  manifolds is to start from a bracket--generating distribution
  $H=T^{-1}M\subset TM$ on a smooth manifold $M$. Then one considers
  the subspaces in the tangent spaces spanned by sections of $H$ and
  by brackets of two such sections. Assuming that these subspaces all
  have the same dimension, they define a smooth subbundle
  $T^{-2}M\supset T^{-1}M$. Proceeding in that way, one obtains a
  filtered manifold (if the constant rank assumption is satisfied in
  each step). As a further regularity assumption on $H$, one can then
  require that the resulting filtered manifold is regular of type
  $\frak m$ for a nilpotent graded Lie algebra $\frak m$ which is
  automatically fundamental.

  In several cases, all these regularity properties are consequences
  of some genericity assumptions on $H$. For example if $\dim(M)=6$
  and $H$ has rank $3$, then one may assume that sections of $H$
  together with Lie brackets of two such sections span the full
  tangent space in each point. This automatically implies that $M$ is
  regular of type $\frak m=\frak m_{-2}\oplus\frak m_{-1}$ with $\frak
  m_{-1}=\Bbb R^3$ and $\frak m_{-2}=\La^2\Bbb R^3$ with the wedge
  product as the Lie bracket. These are the distributions studied by
  R.~Bryant in his thesis, see \cite{Bryant}. Similarly, generic rank
  two distributions in dimension five as studied in E.~Cartan's ``five
  variables paper'' \cite{Cartan:five} automatically give rise to
  regular filtered manifolds with $\frak m$ the free three--step
  nilpotent Lie algebra on two generators.

  (2) The analogy to classical G--structures has to be taken with a
  bit of care. Not all structures that look like filtered analogs of
  G--structures actually are filtered $G_0$--structures. Let us
  consider the example of contact manifolds, which by definition are
  just filtered manifolds that are regular of type $\frak m$ for a
  Heisenberg algebra $\frak m$. This means that $\frak m=\frak
  m_{-2}\oplus\frak m_{-1}$ with $\frak m_{-2}\cong\Bbb R$ and such
  that the bracket $\frak m_{-1}\x\frak m_{-1}\to\frak m_{-2}$ is
  non--degenerate as a bilinear map (which implies that $\frak m_{-1}$
  has even dimension). Fixing an identification of $\frak m_{-2}$ with
  $\Bbb R$, the bracket defines a symplectic form on $\frak m_{-1}$
  and $\Aut_{\gr}(\frak m)$ is isomorphic to the conformally
  symplectic group $CSp(\frak m_{-1})\subset GL(\frak m_{-1})$.

  Recall that a sub--Riemannian metric on a contact manifold $(M,H)$
  is given by a smooth family $g_x$ of inner products on the spaces
  $H_x$ for $x\in M$. Unfortunately, a sub--Riemannian metric is not a
  filtered $G_0$--structure in general. The problem is that already in
  the model case, there is not only one positive definite inner
  product on $\frak m_{-1}$ up to equivalence. Given an inner product
  $\langle\ ,\ \rangle$ on $\frak m_{-1}$ we can identify $\frak
  m_{-2}$ with $\Bbb R$ and then diagonalize the skew symmetric
  bilinear form defined by the bracket with respect to the inner
  product. If $\dim(\frak m_{-1})=2k$, then this gives $k$ eigenvalues
  determined up to sign, which only change by an overall factor upon
  changing the identification of $\frak m_{-2}$ with $\Bbb R$. Thus
  the ratios of the positive eigenvalues are independent of all
  choices. Clearly, if two inner products on $\frak m_{-1}$ are
  equivalent under the action of $CSp(\frak m_{-1})$, they must lead to
  the same ratios of eigenvalues. So at least if $\dim(\frak
  m_{-1})\geq 4$, the isomorphism classes of inner products on $\frak
  m_{-1}$ depend on continuous parameters. Different values of these
  parameters may lead to non--isomorphic stabilizers of the inner
  product within $CSp(\frak m_{-1})$ and even to stabilizers of
  different dimension.

  To obtain a filtered $G_0$--structure there have to be isomorphisms
  $H_x\to\frak m_{-1}$ for all points $x\in M$, which at the same time 
  are compatible with the conformally symplectic structures on both spaces
  and with $g_x$ and a fixed inner product on $\frak
  m_{-1}$. This clearly shows that for the continuous invariants for
  the inner products $g_x$ all have to be constant in order for a
  sub--Riemannian metric to define a filtered $G_0$--structure, which
  is a very restrictive condition. If this condition is satisfied,
  however, then sub--Riemannian metrics nicely fit into the general
  concept of filtered $G_0$--structures.
\end{example}

\subsection{Admissible pairs}\label{2.2}
If it is possible to associate a canonical Cartan connection to
filtered G--structures of some fixed type, then there must be a
homogeneous model for the geometry (at least on an infinitesimal
level). It is rather easy to describe existence of a homogeneous
filtered $G_0$--structure infinitesimally.

Consider a Lie group $G$ and a closed subgroup $P\subset G$ and let
$\frak p\subset\frak g$ be their Lie algebras. We make the standard
assumption that the action of $G$ on $G/P$ is infinitesimally
effective, see Section 1.4.1 in \cite{book}. This means that any
normal subgroup of $G$ contained in $P$ has to be discrete, or
equivalently, that there is no non--trivial ideal of $\frak g$ which
is contained in $\frak p$. Observe that for a normal subgroup
$K\subset G$ that is contained in $P$, one may always replace $(G,P)$
by $(G/K,P/K)$, which leads to the same homogeneous space. Allowing
non--trivial discrete subgroups $K$ is again done to include
structures like spin structures.

Then it is well known that for the homogeneous space $G/P$, the
tangent bundle is the associated bundle $T(G/P)\cong G\x_P(\frak
g/\frak p)$. Hence a $G$--invariant filtration
$\{T^{i}(G/P)\}_{i=-1}^{-\mu}$ is equivalent to a sequence of
$P$--invariant subspaces in $\frak g/\frak p$. Taking the pre--images
in $\frak g$ we get a sequence
$$
\frak g=\frak g^{-\mu}\supset \frak g^{-\mu+1}\supset\dots\supset\frak
g^{-1}\supset\frak g^0,
$$ 
of $\Ad(P)$--invariant subspaces, where we put $\frak g^0:=\frak
p$. The corresponding subbundles in $T(G/P)$ then are the images of
the subbundles of $TG$ spanned by left--invariant vector fields with
generators in these subspaces. This readily implies that the
filtration $\{T^{i}(G/P)\}$ makes $G/P$ into a filtered manifold if
and only if $[\frak g^i,\frak g^j]\subset\frak g^{i+j}$ for all
$i,j\leq 0$. 

There is an obvious way to continue this filtration. Define $\frak
g^1\subset\frak g^0$ as the space of those elements $X\in\frak g^0$
such that $\ad(X)(\frak g^i)\subset\frak g^{i+1}$ for all
$i=-\mu,\dots,-1$. The Jacobi identity then implies that we also have
$[\frak g^0,\frak g^1]\subset\frak g^1$. Then define $\frak
g^2\subset\frak g^1$ as the space of those elements $X$ for which
$\ad(X)(\frak g^i)\subset\frak g^{i+2}$ for all
$i=-\mu,\dots,-1$. Again by the Jacobi identity, $[\frak g^0,\frak
g^2]$ and $[\frak g^1,\frak g^1]$ are both contained in $\frak g^2$.
Inductively we obtain a sequence of subspaces $\frak g^j\subset\frak
g^0$ for all $j>0$ such that $\frak g^{j+1}\subset\frak g^j$ for all
$j$ and such that $[\frak g^i,\frak g^j]\subset\frak g^{i+j}$ provided
that all three spaces have been defined already. Since $\frak g$ is
finite dimensional, this sequence of subspaces has to stabilize at
some stage, and we denote by $\nu$ the largest index such that $\frak
g^\nu\neq \frak g^{\nu+1}$. By construction, we then obtain that
$[\frak g^{\nu+1},\frak g]\subset\frak g^{\nu+1}$. Thus $\frak
g^{\nu+1}$ is an ideal in $\frak g$ that is contained in $\frak g^0$, so
$\frak g^{\nu+1}=\{0\}$ by infinitesimal effectivity. Hence we
conclude that we get a filtration of $\frak g$ of the form
\begin{equation}\label{filt-def}
  \frak g=\frak g^{-\mu}\supset\dots\supset\frak g^{-1}\supset\frak
  g^0=\frak p\supset\frak g^1\supset\dots\supset\frak g^\nu.
\end{equation}
This makes $\frak g$ into a \textit{filtered Lie algebra} in the sense
that $[\frak g^i,\frak g^j]\subset\frak g^{i+j}$ for all $i,j$, where
we agree that $\frak g^\ell=\frak g$ for $\ell<-\mu$ and $\frak
g^\ell=\{0\}$ for $\ell>\nu$. Consider the automorphism group
$\Aut(\frak g)$ of $\frak g$, which is a closed subgroup of $GL(\frak
g)$ and thus a Lie group. Define $GL_f(\frak g):=\{\ph\in GL(\frak
g):\forall i:\ph(\frak g^i)\subset\frak g^i\}$, the subgroup of
elements of $GL(\frak g)$ which preserve the filtration of $\frak g$,
and put $\Aut_f(\frak g)=\Aut(\frak g)\cap GL_f(\frak g)$. Then
$GL_f(\frak g)$ and $\Aut_f(\frak g)$ are closed subgroups and thus
Lie subgroups of $GL(\frak g)$. Their Lie algebras are the spaces
$L_f(\frak g,\frak g)$ and $\frak{der}_f(\frak g)$ of filtration
preserving linear maps and derivations, respectively. Abstracting the
properties derived here motivates the following definition.

\begin{definition}\label{def2.2}
  An \textit{admissible pair} $(\frak g,P)$ consists of
\begin{itemize}
\item[(i)] A Lie algebra $\frak g$ endowed with a filtration $\{\frak
  g^i\}_{i=-\mu}^\nu$ as in \eqref{filt-def} making $\frak g$ into a
  filtered Lie algebra.
\item[(ii)] A Lie group $P$ with Lie algebra $\frak p:=\frak g^0$. 
\item[(iii)] A group homomorphism $\Ad:P\to\Aut_f(\frak g)$ whose
  derivative coincides with $\ad|_{\frak g^0}:\frak
  g^0\to\frak{der}_f(\frak g)$. 
\end{itemize}  
such that 
\begin{itemize}
\item [(A)] There is no ideal in $\frak g$ which is contained in
  $\frak g^0$ (``infinitesimal effectivity'').
\item [(B)] If $A\in\frak g^0$ is such that for all $i=-\mu,\dots,-1$,
  we have $\ad(A)(\frak g^i)\subset\frak g^{i+1}$, then $A\in\frak
  g^1$. 
\end{itemize}
\end{definition}

In the above considerations, we have only assumed that we have given
$\frak g^i\subset\frak g$ for $i\leq 0$, and then constructed specific
filtration components for $i>0$. In the definition of an admissible
pair, we start with an arbitrary filtration, and condition (B) just
ensures that we get the same subspace $\frak g^1\subset\frak g^0$ as
constructed above. We will see later that the fact that we get the
``right'' higher filtration components is a consequence of the an
assumption on prolongations that we will impose later on.

Observe that for an admissible pair $(\frak g,P)$ and a subgroup $Q\subset
P$ which contains the connected component of the identity of $P$, also
$(\frak g,Q)$ is an admissible pair.

\begin{example}\label{ex2.2}
  Suppose that $(\frak g,\{\frak g^i\}_{i=-\mu}^\nu)$ is a filtered Lie
  algebra which satisfies the conditions (A) and (B) from Definition
  \ref{def2.2}. Suppose further that $\frak g^0\subset\frak g$
  coincides with its normalizer in $\frak g$, so if $X\in\frak g$
  satisfies $[X,\frak g^0]\subset\frak g^0$ then $X\in \frak g^0$.
  Then given a Lie group $G$ with Lie algebra $\frak g$, define 
$$
P:=\{g\in G:\Ad(g)\in GL_f(\frak g)\}\subset G.
$$ 
This is the pre--image of the closed subgroup $GL_f(\frak g)\subset
GL(\frak g)$ under a smooth homomorphism and thus a closed subgroup,
too. The Lie algebra of $P$ by construction is $\{X\in\frak
g:\ad(X)\in L_f(\frak g,\frak g)\}$, so by definition it contains
$\frak g^0$. On the other hand, the condition on the normalizer
implies that $\ad(X)(\frak g^0)\subset\frak g^0$ already implies
$X\in\frak g^0$, so $P$ has Lie algebra $\frak g^0$. Thus we see that
restricting the adjoint action of $G$ to $P$ makes $(\frak g,P)$ into
an admissible pair.

Without the assumption on the normalizer, one can take any closed
subgroup $\tilde P\subset G$ with Lie algebra $\frak g^0$ and then
form the closed subgroup $P:=\{g\in P:\Ad(g)\in GL_f(\frak g)\}$. As
above, one concludes that this contains the connected component of the
identity of $\tilde P$ and hence has Lie algebra $\frak g^0$ and that
$(\frak g,P)$ is an admissible pair.
  \end{example}

\subsection{Passing to the associated graded}\label{2.3}
For a filtered Lie algebra $(\frak g,\{\frak g^i\})$, one can form the
associated graded vector space to $\frak g$, which inherits a
canonical Lie algebra structure. We put $\gr_i(\frak g):=\frak
g^i/\frak g^{i+1}$ and then define $\gr(\frak
g):=\oplus_{i=-\mu}^\nu\gr_i(\frak g)$. Then we observe that $[X+\frak
  g^{i+1},Y+\frak g^{j+1}]:=[X,Y]+\frak g^{i+j+1}$ is a well--defined
bilinear map $\gr_i(\frak g)\x\gr_j(\frak g)\to\gr_{i+j}(\frak
g)$. Putting these maps together we obtain a bracket $[\ ,\ ]$ on
$\gr(\frak g)$, which makes that space into a \textit{graded Lie
  algebra}. Observe that there is neither a canonical map from $\frak
g$ to $\gr(\frak g)$ nor in the opposite direction and that $\frak g$
and $\gr(\frak g)$ are not isomorphic Lie algebras in general.

The action of $P$ on $\frak g$ by definition preserves the filtration
$\{\frak g^i\}$. Hence for $g\in P$ and each $i$, the linear
isomorphism $\Ad(g):\frak g^i\to\frak g^i$ descends to an isomorphism
$\gr_i(\frak g)\to\gr_i(\frak g)$. Taking these maps together, we
obtain a linear isomorphism $\Adgr(g):\gr(\frak g)\to\gr(\frak g)$
compatible with the grading. Since $\Ad(g)$ is a Lie algebra
automorphism on $\frak g$, we easily conclude that $\Adgr(g)$ is an
automorphism of the graded Lie algebra $\gr(\frak g)$. Of course, this
defines a smooth homomorphism $\Adgr:P\to \Aut_{\gr}(\gr(\frak g))$.

Next, consider the negative part $\frak
m:=\oplus_{i=-\mu}^{-1}\gr_i(\frak g)$ of $\frak g$. By the grading
property, this is a nilpotent graded subalgebra of $\gr(\frak
g)$. Hence we can restrict automorphisms and derivations of $\gr(\frak
g)$ which preserve the grading to the subalgebra $\frak m$, thus
obtaining homomorphisms $\Aut_{\gr}(\gr(\frak g))\to\Aut_{\gr}(\frak
m)$ and $\frak{der}_{\gr}(\gr(\frak g))\to\frak{der}_{\gr}(\frak
m)$. In general, these homomorphisms are neither injective nor
surjective, but under the assumptions we have imposed, we can prove
the following. 

\begin{prop}\label{prop2.3}
  The kernel of the homomorphism $\Adgr:P\to\Aut_{\gr}(\gr(\frak g))$
  is a closed normal subgroup $P_+\subset P$ with Lie algebra $\frak
  g^1$. Denoting the quotient group $P/P_+$ by $G_0$, the Lie algebra
  of $G_0$ can be naturally identified with $\gr_0(\frak g)$ and the
  homomorphism $\Adgr$ descends to an infinitesimally injective
  homomorphism $G_0\to\Aut_{\gr}(\gr(\frak g))$. Via restriction to
  $\frak m\subset\gr(\frak g)$, one obtains a homomorphism
  $G_0\to\Aut_{\gr}(\frak m)$, which is also infinitesimally
  injective.
\end{prop}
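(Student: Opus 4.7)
The plan is to identify the kernel of $\Adgr$ at the Lie algebra level, using hypothesis (iii) and condition (B) of Definition \ref{def2.2}, and then descend everything to the quotient. Since $\Adgr:P\to\Aut_{\gr}(\gr(\frak g))$ is a smooth group homomorphism, its kernel $P_+$ is automatically closed and normal in $P$; the real content is to compute its Lie algebra. By hypothesis (iii), the derivative of $\Ad:P\to\Aut_f(\frak g)$ at $e$ is $\ad|_{\frak g^0}$, so the derivative of $\Adgr$ is obtained by post-composing with the canonical map $\frak{der}_f(\frak g)\to\frak{der}_{\gr}(\gr(\frak g))$. An element $A\in\frak g^0$ lies in the kernel of this composite precisely when $\ad(A)(\frak g^i)\subset\frak g^{i+1}$ for every $i$.

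Now I would use condition (B): it says that requiring these inclusions merely for $i=-\mu,\dots,-1$ already forces $A\in\frak g^1$. Conversely, if $A\in\frak g^1$, then the filtered Lie algebra property $[\frak g^1,\frak g^i]\subset\frak g^{i+1}$ gives the inclusions for all $i$. Hence $\ker(d\Adgr|_e)=\frak g^1$, which is therefore the Lie algebra of $P_+$. Standard Lie theory then makes $G_0:=P/P_+$ a Lie group with $\mathrm{Lie}(G_0)=\frak g^0/\frak g^1=\gr_0(\frak g)$, and $\Adgr$ factors through a continuous, hence smooth, injective homomorphism $G_0\to\Aut_{\gr}(\gr(\frak g))$. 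Its derivative sends $A+\frak g^1$ to the class of $\ad(A)$ on $\gr(\frak g)$, which vanishes exactly when $A\in\frak g^1$ by the computation just done, establishing infinitesimal injectivity.

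For the final assertion, the restriction $\Aut_{\gr}(\gr(\frak g))\to\Aut_{\gr}(\frak m)$ composed with the descended $\Adgr$ has differential sending $A+\frak g^1$ to the restriction of $\ad(A)$ to $\frak m=\oplus_{i<0}\gr_i(\frak g)$. This restriction vanishes iff $[A,\frak g^i]\subset\frak g^{i+1}$ for $i=-\mu,\dots,-1$, which is precisely the hypothesis of condition (B) and therefore forces $A\in\frak g^1$. The main, and essentially only, point to watch is the role of condition (B): it is exactly calibrated so that vanishing on the negative part $\frak m$ already implies vanishing on all of $\gr(\frak g)$, which is what makes the third, and strongest, infinitesimal-injectivity claim work. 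Beyond this, the argument is a direct linear-algebra manipulation with the filtered Lie algebra structure, and no genuine obstacle is expected.
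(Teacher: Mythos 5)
Your proposal is correct and follows essentially the same route as the paper: identify the Lie algebra of $P_+$ as the kernel of the derivative of $\Adgr$ (namely those $A\in\frak g^0$ with $\ad(A)(\frak g^i)\subset\frak g^{i+1}$ for all $i$, which equals $\frak g^1$ by condition (B) together with the filtered Lie algebra property), then descend to $G_0=P/P_+$ and invoke condition (B) once more for the restriction to $\frak m$. The only cosmetic difference is that the paper spells out the intermediate homomorphism $GL_f(\frak g)\to GL_{\gr}(\gr(\frak g))$ explicitly, whereas you phrase the same step as post-composition with the canonical map on derivations.
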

\begin{proof}
  Since $\Adgr$ is a smooth homomorphism of Lie groups, its kernel is
  a closed normal subgroup $P_+\subset P$. Next, denote by
  $GL_{\gr}(\gr(\frak g))$ the group of all linear automorphisms of
  $\gr(\frak g)$ that preserve the grading. Then there is an obvious
  homomorphism $GL_f(\frak g)\to GL_{\gr}(\gr(\frak g))$ such that the
  image of $\ph\in GL_f(\frak g)$ is given on $\gr_i(\frak g)$ as the
  isomorphism induced by $\ph|_{\frak g^i}:\frak g^i\to\frak g^i$. On
  the Lie algebra level, this corresponds to the map $L_f(\frak
  g,\frak g)\to L_{\gr}(\gr(\frak g),\gr(\frak g))$ obtained in the
  same way.

  By construction, $\Adgr$ is simply the composition of $\Ad$ with
  this homomorphism, so the derivative of $\Adgr$ maps $X\in\frak g^0$
  to the map $\gr(\frak g)\to\gr(\frak g)$ induced by $\ad(X)\in
  L_f(\frak g,\frak g)$. Now the Lie algebra of $P_+$ by construction
  coincides with the kernel of this derivative. Hence it consists of
  all $X$ such that $\ad(X)(\frak g^i)\subset\frak g^{i+1}$ for all
  $i=-\mu,\dots,\nu$. This evidently contains $\frak g^1$ and by
  condition (B) in Definition \ref{def2.2} it actually coincides with
  $\frak g^1$.

  Now it is clear by construction that $\Adgr$ descends to an
  infinitesimally injective homomorphism
  $P/P_+=G_0\to\Aut_{\gr}(\gr(\frak g))$. Restricting the resulting
  maps, we get a homomorphism $G_0\to\Aut_{\gr}(\frak m)$. But by
  construction the kernel of the derivative of the composition $P\to
  G_0\to \Aut_{\gr}(\frak m)$ consists of all $X\in\frak g^0$ such
  that $\ad(X)(\frak g^i)\subset\frak g^{i+1}$ for all
  $i=-\mu,\dots,-1$. Again by condition (B) in Definition
  \ref{def2.2}, this coincides with $\frak g^1$, so the last claim
  follows.
\end{proof}

\begin{example}\label{ex2.3}
  A simple but important example showing that different filtered Lie
  algebras may lead to the same data on the level of the associated
  graded is related to model mutation, see Definition 3.8 in
  \cite{Sharpe}. Consider the group $G=O(n+1,\Bbb R)$ and let
  $P:=O(n)\subset G$ be the stabilizer of the hyperplane $\Bbb
  R^n\subset\Bbb R^{n+1}$. Then on the Lie algebra $\frak
  g=\frak{o}(n+1)$, we define a filtration by $\frak g^{-1}=\frak g$
  and $\frak g^0:=\frak{o}(n)=\frak p\subset\frak g$. (This makes
  $\frak g$ into a filtered Lie algebra, since $\frak g^0$ is a Lie
  subalgebra of $\frak g$, so any homogeneous space corresponds to an
  admissible pair.) In matrix form, we can decompose any skew
  symmetric matrix into blocks of size $n$ and $1$ as $\begin{pmatrix}
    A & v\\ -v^t & 0\end{pmatrix}$ with $A\in\frak{o}(n)$ and
  $v\in\Bbb R^n$. Denoting elements of $\frak g$ by $(A,v)$ we see
  that $\frak g^0$ corresponds to the elements of the form
  $(A,0)$. Hence $\gr(\frak g)=\gr_{-1}(\frak g)\oplus\gr_0(\frak
  g)\cong\Bbb R^n\oplus\frak{o}(n)$, and the only non--zero brackets
  are the bracket on $\gr_0(\frak g)\cong\frak g^0$ and the one
  $\gr_0(\frak g)\x\gr_{-1}(\frak g)\to\gr_{-1}(\frak g)$ given by the
  standard action of $\frak{o}(n)$ on $\Bbb R^n$. So the fact that two
  matrices $(0,v)$ and $(0,w)$ in general have non-trivial bracket
  (contained in $\frak g^0$) is forgotten when passing to the
  associated graded. Likewise, the action of $P=O(n)$ on $\gr(\frak
  g)$ is just the direct sum of the standard action on $\Bbb R^n$ and
  the adjoint action on $\frak{o}(n)$.

  The key issue about this example is that one can start in a very
  similar way starting with $G=\text{Euc}(n)$, the group of Euclidean
  motions and $P\cong O(n)\subset G$ the stabilizer of a point, or
  with $G=O(n,1)$ and $P\cong O(n)\subset G$ the stabilizer of a
  positive hyperplane. Both these examples lead to the same graded Lie
  algebra $\gr(\frak g)$, the same group $P$, and the same action of
  $P$ on $\gr(\frak g)$. This corresponds to the fact that one may
  equally well take Euclidean space, the sphere, or hyperbolic space
  as the homogeneous model of Riemannian geometry. The difference
  between the three models only shows up in the resulting notion of
  curvature. Here Euclidean space leads to the standard notion of
  Riemann curvature while the other models lead to a shift by a
  curvature tensor of constant sectional curvature chosen in such a
  way that the sphere respectively hyperbolic space have zero
  curvature.
\end{example}

\subsection{Cartan geometries and underlying structures}\label{2.4} 
Traditionally, Cartan geometries are defined starting from a pair
$(G,P)$, but it is clear that there is no problem to start from a pair
$(\frak g,P)$ instead. So given an admissible pair $(\frak g,P)$, and
a smooth manifold $M$ a \textit{Cartan geometry} $(p:\Cal G\to M,\om)$
of type $(\frak g,P)$ on $M$ is given by a principal fiber bundle
$p:\Cal G\to M$ with structure group $P$, which is endowed with a
\textit{Cartan connection} $\om\in\Om^1(\Cal G,\frak g)$. Denoting by
$r^g$ the principal right action of an element $g\in P$ and by $\ze_X$
the fundamental vector field generated by and element $X\in\frak p$,
the defining properties of a Cartan connection are
$(r^g)^*\om=\Ad(g^{-1})\o\om$, $\om(\ze_X)=X$, and the fact that for
each point $u\in\Cal G$, the value $\om_u:T_u\Cal G\to\frak g$ is a
linear isomorphism. Observe that the last condition forces the
dimension of $M$ to be equal to $\dim(\frak g)-\dim(\frak
g^0)$. Together with the condition on fundamental vector fields, we
also conclude that the vertical subbundle of $p:\Cal G\to M$ can be
characterized via $V_uP=\{\xi\in T_uP:\om_u(\xi)\in\frak g^0\}\subset
T_uP$.

The \textit{curvature} of a Cartan connection $\om\in\Om^1(\Cal
G,\frak g)$ is the two--form $K\in\Om^2(\Cal G,\frak g)$ defined by
$K(\xi,\eta):=d\om(\xi,\eta)+[\om(\xi),\om(\eta)]$ for
$\xi,\eta\in\frak X(\Cal G)$. It easily follows from the defining
properties of a Cartan connection $\om$ that $K$ is equivariant for
the principal right action and \textit{horizontal},
i.e.~$(r^g)^*K=\Ad(g^{-1})\o K$ for all $g\in P$ and $0=K(\ze_X,\eta)$
for any $X\in\frak g^0=\frak p$ and $\eta\in\frak X(\Cal G)$. This is
also a consequence of Proposition \ref{prop4.1}, whose proof is
independent of what we are doing here. 

A classical concept is that the Cartan connection $\om$ is called
\textit{torsion--free} if and only if $K(\xi,\eta)\in\frak
g^0\subset\frak g$ for all tangent vectors $\xi$ and $\eta$ on $\Cal
G$. A weakening that will be crucial for the further development is the
concept of regularity. We call the Cartan connection $\om$
\textit{regular} if for tangent vectors $\xi,\eta\in T_u\Cal G$ such
that $\om_u(\xi)\in\frak g^i$ and $\om_u(\eta)\in\frak g^j$ for some
$i,j$, we always have $K_u(\xi,\eta)\in\frak g^{i+j+1}$. Observe that
this condition is always satisfied if one of the indices is $\geq 0$
by horizontality of $K$. Since for negative indices $i$ and $j$, we
always have $i+j+1<0$, we conclude that a torsion--free Cartan
connection is automatically regular. 

Now we can prove that any regular Cartan geometry modelled on an
admissible pair gives rise to an underlying filtered
$G_0$--structure. Suppose that we have a filtered Lie algebra $(\frak
g,\{\frak g^i\}_{i=-\mu}^\nu)$ with associated graded $\gr(\frak
g)=\oplus_{i=-\mu}^\nu\gr_i(\frak g)$. In Section \ref{2.3} above, we
have observed that the negative part $\mathfrak
m:=\oplus_{i=-\mu}^{-1}$ is nilpotent graded Lie subalgebra of
$\gr(\frak g)$. For $A\in\gr_0(\frak g)$, we can restrict the adjoint
action of $A$ on $\gr(\frak g)$ to $\frak m$, thus obtaining a
homomorphism $\ad_{\frak m}:\frak g_0\to\frak{der}_{\gr}(\frak
m)$. Observe that this is the derivative of the homomorphism
$G_0\to\Aut_{\gr}(\frak m)$ constructed in Proposition \ref{prop2.3},
so we have seen there that $\ad_{\frak m}$ is injective. Observe that
this proposition shows that the concept of a filtered $G_0$--structure
makes sense on regular filtered manifolds of type $\frak m$.

\begin{thm}\label{thm2.4}
  Let $(\frak g,P)$ be an admissible pair, $\gr(\frak g)$ the
  associated graded Lie algebra to $\frak g$, and $\frak m$ its
  negative part. Let $P_+\subset P$ be the subgroup defined in
  Proposition \ref{prop2.3} and put $G_0:=P/P_+$.

  Then any regular Cartan geometry $(p:\Cal G\to M,\om)$ of type
  $(\frak g,P)$ over a smooth manifold $M$ naturally induces a
  filtration $\{T^iM\}_{i=-\mu}^{-1}$ of the tangent bundle $TM$, which
  makes $M$ into a filtered manifold that is regular of type
  $\mathfrak m$, as well as a filtered $G_0$--structure on $M$.
\end{thm}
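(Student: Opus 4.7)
The plan is to transport the filtration of $\frak g$ from the Cartan bundle down to $M$, then use regularity together with the curvature formula to get both the filtered manifold structure and the reduction to $G_0$.

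First I would define the filtration on $\Cal G$ by $T^i\Cal G:=\om^{-1}(\frak g^i)$. Because $(r^g)^*\om=\Ad(g^{-1})\o\om$ and $\Ad(g)$ preserves the filtration by condition (iii) of Definition~\ref{def2.2}, this filtration is $P$--invariant. For $i<0$ we have $T^i\Cal G\supset T^0\Cal G=V\Cal G=\om^{-1}(\frak g^0)$, so $Tp$ identifies $T^i\Cal G/V\Cal G$ with a $P$--invariant smooth subbundle $T^iM\subset TM$; equivalently, any (not necessarily equivariant) lift of a section of $T^iM$ automatically takes values in $T^i\Cal G$.

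Next, to check the filtered manifold property, I would use $P$--invariant lifts. Given sections $\xi\in\Ga(T^iM)$, $\eta\in\Ga(T^jM)$ with $i,j<0$, take $P$--invariant lifts $\tilde\xi,\tilde\eta$; these automatically satisfy $\om(\tilde\xi)\in\frak g^i$ and $\om(\tilde\eta)\in\frak g^j$ pointwise, and $[\tilde\xi,\tilde\eta]$ is a $P$--invariant lift of $[\xi,\eta]$. The identity
\begin{equation*}
\om([\tilde\xi,\tilde\eta])=\tilde\xi\cdot\om(\tilde\eta)-\tilde\eta\cdot\om(\tilde\xi)+[\om(\tilde\xi),\om(\tilde\eta)]-K(\tilde\xi,\tilde\eta)
\end{equation*}
then reads, term by term: the derivative terms lie in $\frak g^j$ and $\frak g^i$ respectively (hence in $\frak g^{i+j}$ since $i,j\leq 0$), the bracket term lies in $\frak g^{i+j}$ because $\frak g$ is filtered, and $K(\tilde\xi,\tilde\eta)\in\frak g^{i+j+1}\subset\frak g^{i+j}$ by regularity. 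So $[\xi,\eta]\in\Ga(T^{i+j}M)$.

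To pin down the symbol algebra, I would repeat the same computation one filtration level finer. For $u\in\Cal G$ with $x=p(u)$ and $i<0$, the isomorphism $\om_u$ induces $\ph_u^i:\gr_i(T_xM)\cong T^i_u\Cal G/T^{i+1}_u\Cal G\to\frak g^i/\frak g^{i+1}=\gr_i(\frak g)$, assembling into a graded vector space isomorphism $\ph_u:\gr(T_xM)\to\frak m$. Taking the displayed formula modulo $\frak g^{i+j+1}$, the two derivative terms lie in $\frak g^j$ and $\frak g^i$, both inside $\frak g^{i+j+1}$ since $i,j\leq -1$, and $K(\tilde\xi,\tilde\eta)\in\frak g^{i+j+1}$ again by regularity. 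Hence $\om([\tilde\xi,\tilde\eta])\equiv[\om(\tilde\xi),\om(\tilde\eta)]\pmod{\frak g^{i+j+1}}$, which exactly says that $\ph_u$ intertwines the Levi bracket with the bracket on $\frak m\subset\gr(\frak g)$. Thus $\ph_u^{-1}\in\Cal P_xM$.

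Finally, the equivariance $(r^g)^*\om=\Ad(g^{-1})\o\om$ translates into $\ph_{u\cdot g}^{-1}=\ph_u^{-1}\o\Adgr(g^{-1})$ for $g\in P$, so $u\mapsto\ph_u^{-1}$ is a smooth $P$--equivariant map $\Ph:\Cal G\to\Cal PM$, where $P$ acts on $\Cal PM$ via $\Adgr:P\to\Aut_{\gr}(\frak m)$. By Proposition~\ref{prop2.3}, $P_+$ is exactly the kernel of this action, so $\Ph$ factors through a smooth $G_0$--equivariant map $\bar\Ph:\Cal G_0:=\Cal G/P_+\to\Cal PM$; since $P_+\subset P$ is closed and normal, $\Cal G_0\to M$ is a principal $G_0$--bundle, and local sections of $\Cal G_0$ pushed through $\bar\Ph$ give the local trivializations of $\gr(TM)$ that exhibit $M$ as regular of type $\frak m$. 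The main point requiring care is the bookkeeping of filtration indices in the two applications of the structure equation; everything else is routine once one uses $P$--invariant lifts.
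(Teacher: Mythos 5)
Your argument follows the paper's proof of Theorem \ref{thm2.4} essentially verbatim: the filtration $T^i\Cal G=\om^{-1}(\frak g^i)$ pushed down to $M$, the structure equation for $\om([\tilde\xi,\tilde\eta])$ combined with regularity (read once modulo $\frak g^{i+j}$ and once modulo $\frak g^{i+j+1}$) to obtain both the filtered--manifold property and the identification of the symbol algebra with $\frak m$, and the quotient by $P_+$ to get the reduction; the use of $P$--invariant lifts is an unnecessary but harmless variation, since (as you note) any lift of a section of $T^iM$ lies in $T^i\Cal G$. The one slip is the equivariance formula, which should read $\ph_{u\cdot g}^{-1}=\ph_u^{-1}\o\Adgr(g)$ (from $\ph_{u\cdot g}=\Adgr(g^{-1})\o\ph_u$) --- as written, $g\mapsto\Adgr(g^{-1})$ is an anti--homomorphism and would not give a reduction of structure group --- but this is a bookkeeping error, not a gap in the argument.
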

\begin{proof}
  By definition, for each $u\in\Cal G$ the map $\om_u:T_u\Cal
  G\to\frak g$ is a linear isomorphism. Thus for $i=-\mu,\dots,\nu$, we
  can define $T^i_u\Cal G\subset T_u\Cal G$ as the subspace consisting
  of all tangent vectors $\xi$ such that $\om_u(\xi)\in\frak
  g^i\subset\frak g$. Smoothness of $\om$ immediately implies that
  these spaces fit together to define smooth subbundles $T^i\Cal
  G\subset T\Cal G$ such that $T^i\Cal G\supset T^{i+1}\Cal G$ for all
  $i$. Moreover, by definition, $T^0\Cal G$ is the vertical subbundle
  $\ker(Tp)$ of $p:\Cal G\to M$. In particular, for each $u\in\Cal G$,
  the map $\om_u:T_u\Cal G\to\frak g$ descends to a linear isomorphism
  $T_u\Cal G/T^0_u\Cal G\to\frak g/\frak g^0$, and since $T^0_u\Cal
  G=\ker(T_up)$, the left hand space is naturally isomorphic to
  $\im(T_up)=T_{p(u)}M$.

  Equivariancy of $\om$ shows that for $\xi\in T_u^i\Cal G$ and $g\in
  P$ with principal right action $r^g:\Cal G\to\Cal G$, we get
$$
\om_{u\cdot
  g}(T_ur^g\cdot\xi)=((r^g)^*\om)(u)(\xi)=\Ad(g^{-1})(\om_u(\xi))\in \frak
g^i. 
$$ Since $\Ad(g^{-1})\in\Aut_f(\frak g)$, the subbundle $T^i\Cal G$ is
invariant under $Tr^g$ for each $i$ and each $g\in P$. Now for a point
$x\in M$ we can choose a point $u\in\Cal G$ such that $p(u)=x$ and
consider the linear isomorphism $\ph_u:T_xM\to\frak g/\frak g^0$ from
above. Any other point over $x$ is of the form $u\cdot g$ for some
element $g\in P$ and we conclude that $\ph_{u\cdot
  g}=\Adb(g^{-1})\o\ph_u$. Here $\Adb(g^{-1})$ denotes the linear
automorphism of $\frak g/\frak g^0$ induced by $\Ad(g^{-1})$. In
particular, for all $i=-\mu,\dots,-1$, the pre--image
$\ph_u^{-1}(\frak g^i/\frak g^0)\subset T_xM$ is independent of the
choice of $u$, thus giving rise to a well defined linear subspace
$T^i_xM\subset T_xM$.

Now take a local smooth section $\si:U\to\Cal G$ of the principal bundle
$p:\Cal G\to M$, let $\pi:\frak g\to\frak g/\frak g^0$ be the
canonical projection, and consider $\pi\o\si^*\om\in \Om^1(M,\frak
g/\frak g^0)$. By construction, for each $x\in U$, this restricts to
the linear isomorphism $\ph_{\si(x)}:T_xM\to \frak g/\frak g^0$. Hence
it defines a trivialization of $TM|_U$ under which the subspace
$T^i_xM$ for $x\in U$ correspond to $\frak g^i/\frak g^0\subset\frak
g/\frak g^0$. Thus we see that we have actually constructed smooth
subbundles $TM=T^{-\mu}M\supset\dots\supset T^{-1}M$, and we claim
that these make $M$ into a filtered manifold which is regular of type
$\frak m$. 

To see this, take local smooth sections $\xi\in\Ga(T^iM)$ and $\eta\in
\Ga(T^jM)$, defined on a subset of $M$ over which $\Cal G$ is trivial. Then
there are smooth lifts $\tilde\xi,\tilde\eta\in\frak X(\Cal G)$ and by
construction we have $\tilde\xi\in\Ga(T^i\Cal G)$ and
$\tilde\eta\in\Ga(T^j\Cal G)$. It is a basic fact of differential
geometry that $[\tilde\xi,\tilde\eta]$ then is a lift of
$[\xi,\eta]\in\frak X(M)$. Using the definition of the exterior
derivative and of the curvature $K$, we now compute
\begin{equation}\label{bracket}
\begin{aligned}
  \om\left(\left[\tilde\xi,\tilde\eta\right]\right)&=
  -d\om(\tilde\xi,\tilde\eta)+
  \tilde\xi\cdot\om(\tilde\eta)-\tilde\eta\cdot\om(\tilde\xi)\\
  &= -K(\tilde\xi,\tilde\eta)+[\om(\tilde\xi),\om(\tilde\eta)]+
  \tilde\xi\cdot\om(\tilde\eta)-\tilde\eta\cdot\om(\tilde\xi).
\end{aligned}
\end{equation}
Now by assumption, the function $\om(\tilde\xi)$ has values in $\frak
g^i\subset\frak g^{i+j+1}$, so the same holds for the derivative
$\tilde\eta\cdot\om(\tilde\xi)$. Likewise,
$\tilde\xi\cdot\om(\tilde\eta)$ has values in $\frak g^j\subset\frak
g^{i+j+1}$, and by regularity, also $K(\tilde\xi,\tilde\eta)$ has
values in $\frak g^{i+j+1}$. Finally,
$[\om(\tilde\xi),\om(\tilde\eta)]$ has values in $[\frak g^i,\frak
g^j]\subset\frak g^{i+j}$, which shows that $[\xi,\eta]$ is a section
of $T^{i+j}M$, so $(M,\{T^iM\})$ is a filtered manifold. 

Now of course, the above local simultaneous trivializations of the
bundles $T^iM$ induces local trivializations $\gr_i(TM)|_U\cong
U\x\gr_i(\frak g)$, so one obtains a local trivialization
$\gr(TM)|_U\cong U\x\frak m$. By construction, this can be explicitly
described as follows: Given $v\in\Ga(\gr_i(TM)|_U)$, first choose a
representative vector field $\xi\in\Ga(T^iU)$ and then $v$ corresponds
to the function $U\to\frak g^i/\frak g^{i+1}$ defined by
$\si^*\om(\xi)+\frak g^{i+1}$. Of course, the same result is obtained
if one applies $\om$ to any other lift of $\xi$ in $T_{\si(x)}\Cal
G$. But now in the above situation, the class of
$[\om(\tilde\xi),\om(\tilde\eta)]$ in $\frak g^{i+j}/\frak g^{i+j+1}$
coincides with the bracket in $\frak m\subset\gr(\frak g)$ of the
elements $\om(\tilde\xi)+\frak g^{i+1}$ and $\om(\tilde\eta)+\frak
g^{j+1}$. By the above argument, this coincides with the class of
$\om([\tilde\xi,\tilde\eta])$, thus representing the class of
$[\xi,\eta]$ in $\gr_{i+j}(TM)$. This shows that in our local
trivialization the Levi--bracket on $M$ is represented by the Lie
bracket on $\frak m$, which shows that the filtered manifold
$(M,\{T^iM\})$ is regular of type $\frak m$.

To construct the filtered $G_0$--structure, observe that the closed
normal subgroup $P_+\subset P$ acts freely on $\Cal G$ by the
restriction of the principal right action. Using a local
trivialization of $\Cal G$, one easily concludes that the orbit space
$\Cal G_0:=\Cal G/P_+$ endowed with the obvious projection $p_0:\Cal
G_0\to M$ is a principal fiber bundle with structure group
$P/P_+=G_0$. As we have observed above, for a point $u\in\Cal G$ we
obtain an isomorphism $\ph_u:T_{p(u)}M\to\frak g/\frak g^0$ which is
compatible with the filtrations on the two spaces. Hence we can pass
to the induced linear isomorphism
$\underline{\ph}_u:\gr(T_{p(u)}M)\to\frak m=\gr(\frak g/\frak
g^0)$. We have seen already that $\ph_{u\cdot g}=\Adb(g^{-1})\o\ph_u$,
which readily implies that $\underline{\ph}_{u\cdot
  g}=\Adgr(g^{-1})|_{\frak m}\o\underline{\ph}_u$. But now by
definition $g\in P_+$ implies that $\Adgr(g^{-1})=\id$ so
$\underline{\ph}_u$ depends only on the class of $uP_+\in\Cal
G/P_+=\Cal G_0$. Hence for any point $u_0\in\Cal G_0$, we obtain a
linear isomorphism $\ps_{u_0}:\gr(T_{p_0(u_0)}M)\to\frak m$ and for
$g_0\in G_0$, we get $\ps_{u_0\cdot g_0}=\Adgr(g^{-1})\o\ps_{u_0}$,
where $g\in P$ is any element such that $gP_+=g_0$. Using a local
trivialization of $\Cal G$ and the induces local trivialization of
$\Cal G_0$, one easily shows that this depends smoothly on $u_0$, thus
defining a reduction of structure group $\Cal G_0\to\Cal PM$ as
required.
\end{proof}

Take an admissible pair $(\frak g,P)$ and suppose that $G$ is a Lie
group with Lie algebra $\frak g$ that contains $P$ as a closed
subgroup. Then of course $G\to G/P$ is a principal $P$--bundle and the
left Maurer--Cartan form makes this into a Cartan geometry, which is
flat by the Maurer--Cartan equation. Hence it is regular so by the
theorem, $G/P$ is a regular filtered manifold of type $\frak m$ and
$G/P_+\to G/P$ is a filtered $G_0$--structure. By construction all
these structures are homogeneous under the action of $G$, so we have
found many examples of homogeneous filtered $G_0$--structures.

\subsection{Tanaka prolongation}\label{2.5} 
At the current stage, we have just encoded the fact that a Cartan
geometry induces an underlying filtered $G_0$--structure into
algebraic data for the modelling pair $(\frak g,P)$. This by no means
implies that a Cartan geometry of this type should be canonically
associated to this underlying filtered $G_0$--structure. We simply
have not imposed any conditions in that direction so far. An algebraic
condition serving that purpose has been introduced in the pioneering
work of N.~Tanaka, see \cite{Tanaka:prolon}. Since it is actually
phrased in the language of graded Lie algebras, we can directly impose
this condition in our setting.

\begin{definition}\label{def2.5}
  Let $(\frak g,P)$ be an admissible pair, let $\{\frak
  g^i\}_{i=-\mu}^\nu$ be the corresponding filtration of $\frak g$,
  $\gr(\frak g)$ the associated graded Lie algebra and $\frak m$ its
  negative part.

  (1) We say that $\gr(\frak g)$ is the \textit{full prolongation of
    $(\frak m,\gr_0(\frak g))$} if for each $j\geq 1$ and each linear
  map $\ph:\frak m\to\gr(\frak g)$ that is homogeneous of degree $j$
  (i.e.~satisfies $\ph(\frak m_i)\subset\gr_{i+j}(\frak g)$) such that
  for all $X,Y\in\frak m$ we have $\ph([X,Y])=[\ph(X),Y]+[X,\ph(Y)]$
  there is an element $Z\in\gr_j(\frak g)$ such that
  $\ph=\ad(Z)|_{\frak m}:\frak m\to\gr(\frak g)$.

In this case, we say that $(\frak g,P)$ is an \textit{infinitesimal
  homogeneous model} for filtered $G_0$--structures, where
$G_0=P/P_+$.

  (2) We say that $\gr(\frak g)$ is the \textit{full prolongation of
    $\frak m$} if in addition $\ad_{\frak m}:\gr_0(\frak g)\to
  \frak{der}_{\gr}(\frak m)$ is an isomorphism. 

  In this case, we say that $(\frak g,P)$ is an \textit{infinitesimal
    homogeneous model} for filtered manifolds that are regular of type
  $\frak m$.
\end{definition}

In our context, this condition is quite easy to understand. Observe
that for each $i>0$ and each $Z\in\gr_i(\frak g)$, the Jacobi identity
for $\frak g$ shows that the map $\ad(Z)|_{\frak m}:\frak
m\to\gr(\frak g)$ satisfies
$\ad(Z)([X,Y])=[\ad(Z)(X),Y]+[X,\ad(Z)(Y)]$. This works for any
admissible pair inducing a filtered $G_0$--structure. So the condition
of being the full prolongation says that $(\frak g,P)$ is (in some
sense) maximal among the admissible pairs inducing a filtered
$G_0$--structure. For (finite dimensional) Cartan geometries of type
$(\frak g,P)$ being equivalent to the underlying filtered
$G_0$--structure (i.e.~not encoding additional data) should certainly
imply that $(\frak g,P)$ is maximal in this sense.

\begin{remark}\label{rem2.5} 
  It turns out that if $(\frak g,P)$ is an admissible pair such that
  $\gr(\frak g)$ is the full prolongation of $(\frak m,\gr_0(\frak
  g))$, then the filtration on $\frak g$ is obtained from its
  non--positive part as described in Section \ref{2.2}. This means
  that if $A\in\frak g^1$ has the property that $\ad(A)(\frak
  g^i)\subset\frak g^{i+j}$ for some $j>1$ and all $i<0$, then
  $A\in\frak g^j$. Since this fact will not be needed in what follows,
  we only sketch briefly how this is proved:

  Suppose that for some $0<\ell<j$ we have $A\in\frak g^\ell$ with
  non--zero image in $\gr_\ell(\frak g)$, such that $\ad(A)(\frak
  g^i)\subset\frak g^{i+j}$ for all $i<0$. Then $\ad(A)$ induces a map
  $\ph$ on $\gr(\frak g)$ which is homogeneous of degree $j$ and
  satisfies $\ph([X,Y])=[\ph(X),Y]+[X,\ph(Y)]$ for all
  $X,Y\in\gr(\frak g)$ by the Jacobi identity. Since $\gr(\frak g)$ is
  the full prolongation of $(\frak m,\gr_0(\frak g))$ we conclude that
  $\ph$ must coincide with the adjoint action of some element of
  $\gr_j(\frak g)$. Taking a representative $B\in\frak g^j$ for this
  element, we conclude that $A-B\in\frak g^\ell$ has the property that
  its adjoint action maps $\frak g^i$ to $\frak g^{i+j+1}$ for all
  $i<0$ and also has non--zero image in $\gr_\ell(\frak g)$. Assuming
  that already $A$ has this property, we can iterated this until we
  reach an element $A\in \frak g^\ell$ with non--zero image in
  $\gr_\ell(\frak g)$ such that $\ad(A)(\frak g^i)\subset\frak
  g^{i+\nu+1}$, where $\frak g^\nu$ is the smallest non--trivial
  filtration component of $\frak g$.

  At this stage, the map $\ph$ on $\gr(\frak g)$ induced by $\ad(A)$
  is homogeneous of degree $\nu+1$. But for degrees $s>\nu$, we have
  $\gr_s(\frak g)=\{0\}$, so the condition on being the full
  prolongation actually says that $\ph=0$. Thus we see that
  $\ad(A)(\frak g^i)\subset\frak g^{i+\nu+2}$ and iterating once more,
  we reach $\ad(A)(\frak g^i)\subset\frak g^{i+\mu+\nu+1}=\{0\}$ for
  all $i<0$. But this then says that $A$ lies in the center of $\frak
  g$ and thus spans a one--dimensional ideal in $\frak g$. Since this
  ideal is contained in $\frak g^\ell\subset\frak g^0$, infinitesimal
  effectivity leads to a contradiction.
\end{remark}

The conditions from Definition \ref{def2.5} can be neatly phrased in
terms of Lie algebra cohomology. Since $\frak m\subset\gr(\frak g)$ is
a Lie subalgebra, the restriction of the adjoint action makes
$\gr(\frak g)$ into a graded module over the graded Lie algebra $\frak
m$. Now the standard complex for computing the Lie algebra cohomology
$H^*(\frak m,\gr(\frak g))$ has the chain groups $C^k(\frak
m,\gr(\frak g)):=\La^k\frak m^*\otimes\gr(\frak g)$ of $k$--linear,
alternating maps $\frak m^k\to\gr(\frak g)$. The usual homogeneity of
maps defines a grading $C^k(\frak m,\gr(\frak g))=\oplus_\ell
C^k(\frak m,\gr(\frak g))_\ell$. Here we say that $\ph:\frak
m^k\to\gr(\frak g)$ is homogeneous of degree $\ell$ if and only if for
$X_j\in\frak m_{i_j}$ with $j=1,\dots,k$ and $i_j\in\{-\mu,\dots,-1\}$
for all $j$, we have
$\ph(X_1,\dots,X_k)\in\gr_{i_1+\dots+i_k+\ell}(\frak g)$.

The standard differential $\partial:C^k(\frak m,\gr(\frak g))\to
C^{k+1}(\frak m,\gr(\frak g))$ is defined by the usual formula 
\begin{equation}
  \label{partial-def}
  \begin{aligned}
    \partial\ph(X_0,\dots,X_k):=&
    \tsum_{i=0}^k(-1)^i[X_i,\ph(X_0,\dots,\widehat{X_i},\dots,
    X_k)]+\\
    &\tsum_{i<j}(-1)^{i+j}\ph([X_i,X_j],X_0,\dots,\widehat{X_i},\dots,
\widehat{X_j},\dots X_k)
  \end{aligned}
\end{equation}
for $X_0,\dots,X_k\in\frak m$ with hats denoting omission. Since the
brackets preserve the grading, it readily follows that if $\ph$ is
homogeneous of degree $\ell$, then the same holds for
$\partial\ph$. This implies that the cohomology spaces inherit a
grading which we denote by $H^k(\frak m,\gr(\frak g))=\oplus_\ell
H^k(\frak m,\gr(\frak g))_\ell$. Using this, the following result is
well known, we include the simple proof for completeness.

\begin{prop}\label{prop2.5}
  Let $(\frak g,P)$ be an admissible pair. Then $\gr(\frak g)$ is the
  full prolongation of $(\frak m,\gr_0(\frak g))$ (respectively of
  $\frak m$) if and only if $H^1(\frak m,\gr(\frak g))_\ell=0$ for all
  $\ell>0$ (respectively for all $\ell\geq 0$).
\end{prop}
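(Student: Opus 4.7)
The plan is to translate the full prolongation condition into the language of Lie algebra cohomology by spelling out what $\partial$ does on $C^0$ and $C^1$ and matching cocycles with derivations, coboundaries with inner derivations.

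First I would evaluate the formula \eqref{partial-def} in degrees $0$ and $1$. For a $0$--cochain $Z\in C^0(\frak m,\gr(\frak g))=\gr(\frak g)$, only the first sum contributes and one gets $\partial Z(X)=[X,Z]=-\ad(Z)(X)$. Since $\partial Z$ is homogeneous of degree $\ell$ precisely when $Z\in\gr_\ell(\frak g)$, the coboundaries in $C^1(\frak m,\gr(\frak g))_\ell$ are exactly the maps $-\ad(Z)|_{\frak m}$ with $Z\in\gr_\ell(\frak g)$. For a $1$--cochain $\ph:\frak m\to\gr(\frak g)$ one computes $\partial\ph(X,Y)=[X,\ph(Y)]-[Y,\ph(X)]-\ph([X,Y])$, so $\partial\ph=0$ is exactly the derivation identity $\ph([X,Y])=[X,\ph(Y)]+[\ph(X),Y]$. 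Thus the cocycles in $C^1(\frak m,\gr(\frak g))_\ell$ are precisely the degree $\ell$ derivations $\frak m\to\gr(\frak g)$, viewing $\gr(\frak g)$ as an $\frak m$--module via $\ad$.

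Combining these two observations, $H^1(\frak m,\gr(\frak g))_\ell=0$ holds if and only if every homogeneous degree $\ell$ derivation $\ph:\frak m\to\gr(\frak g)$ is of the form $\ad(Z)|_{\frak m}$ for some $Z\in\gr_\ell(\frak g)$. Quantifying this over all $\ell\geq 1$ is verbatim the condition that $\gr(\frak g)$ be the full prolongation of $(\frak m,\gr_0(\frak g))$, which proves the first equivalence.

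For the second equivalence I need to include the case $\ell=0$. A degree $0$ map $\ph:\frak m\to\gr(\frak g)$ automatically satisfies $\ph(\frak m_i)\subset\gr_i(\frak g)=\frak m_i$ for $i<0$, and hence takes values in $\frak m$; its cocycle condition then says exactly that $\ph\in\frak{der}_{\gr}(\frak m)$. The coboundaries in degree $0$ are the maps $-\ad(Z)|_{\frak m}$ with $Z\in\gr_0(\frak g)$, so $H^1(\frak m,\gr(\frak g))_0=0$ is equivalent to surjectivity of $\ad_{\frak m}:\gr_0(\frak g)\to\frak{der}_{\gr}(\frak m)$. Since $\ad_{\frak m}$ is already injective by Proposition \ref{prop2.3}, surjectivity is equivalent to bijectivity, which is the additional requirement in Definition \ref{def2.5}(2). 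No real obstacle appears; the proof is essentially a dictionary between two equivalent formulations of the same algebraic condition.
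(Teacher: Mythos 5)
Your proof is correct and follows essentially the same route as the paper: both arguments simply unwind $\partial$ on $C^0$ and $C^1$, identify cocycles in degree $\ell$ with homogeneous derivations and coboundaries with the maps $\pm\ad(Z)|_{\frak m}$ for $Z\in\gr_\ell(\frak g)$, and invoke the injectivity of $\ad_{\frak m}$ from Proposition \ref{prop2.3} to handle the $\ell=0$ case. The only difference is that you spell out the sign and the surjectivity-versus-bijectivity point slightly more explicitly, which is harmless.
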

\begin{proof}
  Let $\ph:\frak m\to\gr(\frak g)$ be homogeneous of degree $\ell\geq
  0$, so $\ph\in C^1(\frak m,\gr(\frak g))_\ell$. Then $\partial\ph=0$
  exactly says that $0=[X,\ph(Y)]-[Y,\ph(X)]-\ph([X,Y])$ for all
  $X,Y\in\frak m$. If $\ell=0$, then $\ph$ has values in $\frak
  m\subset\gr(\frak g)$ and this equation exactly says that
  $\ph\in\frak{der}_{\gr}(\frak m)$. For $\ell>0$ it exactly boils
  down to the condition used in Definition \ref{def2.5}. On the other
  hand $C^0(\frak m,\gr(\frak g))_\ell=\gr_\ell(\frak g)$ and
  $\ph=\partial Z$ exactly says that $\ph(X)=[X,Z]$, so
  $\ph=-\ad(Z)|_{\frak m}$ and the claim follows.
\end{proof}

\subsection{Examples}\label{2.6}
\textbf{1.~Vanishing prolongation}: Let $\frak m=\frak
m_{-\mu}\oplus\dots\oplus\frak m_{-1}$ be any nilpotent graded Lie
algebra and fix a Lie subalgebra $\frak
g_0\subset\frak{der}_{\gr}(\frak m)$. Then $\frak m\oplus\frak g_0$
naturally is a Lie algebra via
$[(X,A),(Y,B)]:=([X,Y]+A(Y)-B(X),[A,B])$. Assume that $\frak
m\oplus\frak g_0$ is the full prolongation of $(\frak m,\frak g_0)$,
i.e.~that there is no non-zero linear map $\ph:\frak m\to\frak
m\oplus\frak g_0$ which is homogeneous of some positive degree such
that $\ph([X,Y])=[\ph(X),Y]+[X,\ph(Y)]$. Let $G_0$ be a Lie group with
Lie algebra $\frak g_0$ such that the inclusion $\frak
g_0\hookrightarrow\frak{der}_{\gr}(\frak m)$ integrates to a
homomorphism $G_0\to\Aut_{\gr}(\frak m)$. (For example, one may take
the connected virtual Lie subgroup in $\Aut_{\gr}(\frak m)$
corresponding to $\frak g_0$.) Then the filtration defined by $\frak
g^i:=(\oplus_{j\geq i}\frak m_j)\oplus\frak g_0$ for $i=-\mu,\dots,0$,
evidently makes $(\frak g:=\frak m\oplus\frak g_0,G_0)$ into an
admissible pair. The associated graded Lie algebra $\gr(\frak g)$ then
of course is just $\frak m\oplus\frak g_0$ and thus coincides with the
full prolongation of $(\frak m,\gr_0(\frak g))$.

While this is a very simple situation, it covers several interesting
cases. On the one hand, consider a fundamental graded Lie algebra
$\frak m=\oplus_{i=-\mu}^{-1}\frak m_i$ and a positive definite inner
product $b$ on $\frak m_{-1}$. Then as observed in Remark
\ref{rem2.1}(2), any graded derivation of $\frak m$ is determined by
its restriction to $\frak m_{-1}$. Thus we may form $\frak
g_0:=\frak{der}_{\gr}(\frak m)\cap\frak{so}(\frak m_{-1})$ and a
theorem of Morimoto (see \cite{Morimoto:subRiem}) shows that $(\frak
m,\frak g_0)$ has vanishing prolongation. Hence the models for
sub--Riemannian structures of constant type all fall into this
category.

On the other hand, consider a real vector space $\frak m_{-1}$ of even
dimension endowed with a non--degenerate, skew--symmetric bilinear
form $b$. View this as a linear surjection $\La^2\frak m_{-1}\to\Bbb
R$ and define $\frak m_{-2}:=\La^2_0\frak m_{-1}$ to be its kernel. On
the other hand, we can view $b$ as defining a linear isomorphism
$\frak m_{-1}\to\frak m_{-1}^*$ and the inverse of this isomorphism
defines an element $\tilde b\in\La^2\frak m_{-1}$ such that $b(\tilde
b)=1$. Now we define a bracket $[\ ,\ ]:\frak m_{-1}\x\frak
m_{-1}\to\frak m_{-2}$ by $[X,Y]:=X\wedge Y-b(X,Y)\tilde b$. This is
evidently skew--symmetric and since the Jacobi identity is trivially
satisfied, it makes $\frak m:=\frak m_{-2}\oplus\frak m_{-1}$ into a
fundamental graded Lie algebra. Now we define $\frak{csp}(\frak
m_{-1})$ to be the Lie algebra of all endomorphisms $A$ of $\frak
m_{-1}$ for which there is a number $\la\in\Bbb R$ such that for all
$X,Y\in\frak m_{-1}$ we get $b(AX,Y)+b(X,AY)=\la b(X,Y)$. It is easy
to see that for $A\in\frak{csp}(\frak m_{-1})$, the induced map on
$\La^2\frak m_{-1}$ preserves the direct sum decomposition $\frak
m_{-2}\oplus\Bbb R\cdot\tilde b$. Hence there is an induced
endomorphism of $\frak m_{-2}$ and one immediately verifies that
together with $A$, this defines a graded derivation of $\frak m$.

One shows that this construction actually defines an isomorphism
between $\frak{csp}(\frak m_{-1})$ and $\frak{der}_{\gr}(\frak m)$,
and one may take this full algebra to be $\frak g_0$. Now of course
$\frak{csp}(\frak m_{-1})$ is reductive with one--dimensional center
and semisimple part $\frak g_0^{ss}:=\frak{sp}(\frak m_{-1})$. Using
this, the beginning of the standard complex computing the Lie algebra
cohomology $H^*(\frak m,\frak m\oplus\frak g_0)$ can be analyzed using
representation theory of $\frak g_0^{ss}$. This is carried out in the
thesis \cite{deZanet}, and in particular it is shown in Proposition 11
of that reference that $(\frak m,\frak g_0)$ has vanishing
prolongation. Hence in this case, we can simply put
$G_0=\Aut_{\gr}(\frak m)\cong CSp(\frak m_{-1})$ to obtain an
appropriate admissible pair $(\frak g,G_0)$. It turns out that this is
the model for the unique type of generic distributions of even rank
$n=2m$ in manifolds of dimension $\frac{n(n+1)}{2}-1$, see
\cite{deZanet}.

\smallskip

\textbf{2.~Parabolics}: Let $G$ be a Lie group, whose Lie algebra
$\frak g$ is semisimple, and let $\frak p:=\frak g^0$ be a parabolic
subalgebra. One characterization of parabolic subalgebras is that the
annihilator of $\frak p$ with respect to the Killing form $B$ of
$\frak g$ is contained in $\frak p$ and coincides with the nilradical
of $\frak p$. Denoting this by $\frak g^1\subset\frak g^0$, one
defines $\frak g^2:=[\frak g^1,\frak g^1]$ and inductively $\frak
g^{i+1}=[\frak g^i,\frak g^1]$. This defines a filtration $\frak
g^0\supset\frak g^1\supset\dots\supset\frak g^\nu\supset\frak
g^{\nu+1}=\{0\}$, where we agree that $\frak g^\nu$ is the last non--zero
term. Then for $i<0$, one defines $\frak g^i$ as the annihilator of
$\frak g^{-i+1}$ under the Killing form. The resulting filtration then
has the form 
$$ \frak g=\frak g^{-\nu}\supset\frak
g^{-\nu+1}\supset\dots\supset\frak g^0\supset\frak
g^1\supset\dots\supset\frak g^\nu
$$ and this makes $\frak g$ into a filtered Lie algebra.  It is well
known that parabolic subalgebras can be equivalently described in
terms of gradings on the Lie algebra $\frak g$ and essentially finding
such a grading amounts to choosing a Cartan subalgebra contained in
$\frak g^0$, see Section 3.2 in \cite{book}. More precisely, there are
subspaces $\frak g_i\subset\frak g$ for $i=-\nu,\dots,\nu$ such that
$[\frak g_i,\frak g_j]\subset\frak g_{i+j}$ and such that for each
$j=-\nu,\dots,\nu$, we have $\frak g^j=\oplus_{i\geq j}\frak g_i$. In
particular this shows that $\gr(\frak g)$ is isomorphic to $\frak g$
as a Lie algebra, but conceptually it is better to distinguish between
the two. Assuming condition (A) in Definition \ref{def2.1}, i.e.~that
none of the simple ideals of $\frak g$ is contained in $\frak g^0$,
then it is well known that condition (B) from that Definition is
automatically satisfied, too.

Now choose a subgroup $P\subset G$ that lies between the normalizer of
$\frak g^0$ in $G$ and its connected component of the identity. Then
we see that $(\frak g,P)$ is an admissible pair. For this specific
case, the Lie algebra cohomology $H^*(\frak m,\gr(\frak g))$ can be
computed using Kostant's theorem (see \cite{Kostant}) for complex
$\frak g$. Via complexification, this also handles the real case, and
it turns out that for almost all cases $\frak g$ is the full
prolongation of $(\frak m,\gr_0(\frak g))$. Basically, this result go
back to N.~Tanaka in \cite{Tanaka:simple}, see also K.~Yamaguchi's
article \cite{Yamaguchi} and Section 3.3.7 of \cite{book}. It is also
possible to characterize the cases in which $\gr(\frak g)$ is the full
prolongation of $\frak m$, see \cite{Yamaguchi} and Proposition 4.3.1
of \cite{book}.

\smallskip

\textbf{3.~Algebras related to (systems of) ODEs}:

Consider the one--dimensional projective space $\Bbb RP^1$, realized
as the qoutient $(\Bbb R^2\setminus\{0\})/\sim$, where $x\sim y$ iff
there is a number $t\in\Bbb R$ such that $y=tx$. Via the standard
action of $SL(2,\Bbb R)$ on $\Bbb R^2$, this is identified with the
homogeneous space $SL(2,\Bbb R)/B$, where $B$ is the stabilizer of a
distinguished line in $\Bbb R^2$. For $k\in\Bbb Z$ and $m\geq 1$, we
can define $\Cal O(k)^m$ as $((\Bbb R^2\setminus\{0\})\times\Bbb
R^m)/\sim_k$, where $(x,v)\sim_k (y,w)$ iff there is a number
$t\in\Bbb R$ such that $y=tx$ and $w=t^kv$. The projection onto the
first factor gives rise to smooth map $\Cal O(k)^m\to\Bbb RP^1$ which
makes $\Cal O(k)^m$ into a vector bundle of rank $m$ over $\Bbb
RP^1$. For $k=-1$ and $m=1$, this produces the tautological line
bundle $\Cal O(-1)$ over $\Bbb RP^1$. By construction, there is a
natural action of the group $SL(2,\Bbb R)\x GL(m,\Bbb R)$ on $\Cal
O(k)^m$ which extends the action on $\Bbb RP^1$ via the first factor.

From the definition it is also clear that smooth sections of the
bundle $\Cal O(k)^m$ can be identified with smooth maps $\ph:\Bbb
R^2\setminus\{0\}\to\Bbb R^m$ which are homogeneous of degree $k$ in
the sense that $\ph(tx)=t^k\ph(x)$. For $k>0$, we can in particular
consider the space $V_k^m:=S^k\Bbb R^{2*}\otimes\Bbb R^m$ of $\Bbb
R^m$--valued homogeneous polynomials of degree $k$ on $\Bbb R^2$. Any
such polynomial defines a global section of the bundle $\Cal O(k)^m$.
Using this, we can define a map from $\Bbb RP^1\x V_k^m\to J^k(\Cal
O(k)^m)$ of $k$--jets of local smooth sections of the bundle $\Cal
O(k)^m$, by sending $(\ell,\ps)$ to the $k$--jet of the global section
of $\Cal O(k)^m$ determined by $\ps\in V_k^m$ at the point
$\ell\in\Bbb RP^1$. It is elementary to verify that this construction
defines an isomorphism $\Bbb RP^1\x V_k^m\to J^k(\Cal O(k)^m)$ of
vector bundles and thus a natural trivialization of this specific jet
bundle. Also, this trivialization is compatible with the natural
actions of the group $SL(2,\Bbb R)\x GL(m,\Bbb R)$ on both
sides. Finally, via the trivialization, for a section $\si\in\Ga(\Cal
O(k)^m)$, the $k$--jet $j^k\si$ defines a smooth function $\Bbb
RP^1\to V_k^m$. Requiring this function to have vanishing derivative
can be viewed as a differential equation of order $k+1$ on sections of
$\Cal O(k)^m$. It is easy to see that in standard local adapted jet
coordinates, this is expressed by the trivial system $y_i^{(k+1)}=0$
for $i=1,\dots,m$.

Now we define $G:=(SL(2,\Bbb R)\x GL(m,\Bbb R))\ltimes V_k^m$, the
semi--direct product of the group $SL(2,\Bbb R)\x GL(m,\Bbb R)$ with
its representation $V_k^m$. This naturally acts on $\Bbb RP^1\x V_k^m$
with $SL(2,\Bbb R)\x GL(m,\Bbb R)$ acting as described above and
elements of $V_k^m$ acting by translation in the second factor. By
construction, $G$ acts transitively on $\Bbb RP^1\x V_k^m$ and its
action preserves the system of ODEs constructed above. Denoting by
$\ell_0\in\Bbb RP^1$ the line stabilized by $B$, the isotropy group of
$(\ell_0,0)$ under the $G$--action is visibly given by $(B\x GL(m,\Bbb
R))\ltimes\{0\}\subset G$.

On the level of Lie algebras, we get $\frak g=(\frak{sl}(2,\Bbb
R)\x\frak {gl}(m,\Bbb R))\oplus V_k^m$ (semi--direct sum) and $\frak
g^0=\frak p=\frak b\x\frak{gl}(m,\Bbb R)$. Now $\frak{sl}(2,\Bbb R)$
carries the canonical $B$--invariant filtration defined by
$\frak{sl}(2,\Bbb R)\supset\frak b\supset[\frak b,\frak b]$, and we
define a filtration on $\frak{sl}(2,\Bbb R)\x\frak {gl}(m,\Bbb R)$ by
simply taking products with $\frak{gl}(m,\Bbb R)$. On the other hand,
the representation $S^k\Bbb R^{2*}=V_k^1$ has an obvious
$B$--invariant filtration induced from the standard weight
decomposition. We fix the degrees in such a way that the component of
degree $0$ is trivial, while for $i>0$, the component of degree $-i$
to be spanned by the weight spaces corresponding to the $i$ largest
weights of the representation $V_k^1$. Since the action of $B$ never
lowers weights, this filtration is $B$--invariant. Taking the tensor
product with $\Bbb R^m$, we arrive at a filtration of $V_k^m$, which
is invariant under $B\x GL(m,\Bbb R)$. Here the dimensions of the
filtration components grow by $m$ in each step.

Taking these together, we obtain a filtration of $\frak g$, with
$\mu=k+1$ and $\nu=1$, i.e.~of the form $\frak g=\frak
g^{-k-1}\supset\dots\supset\frak g^0=\frak p\supset\frak
g^1\supset\{0\}$. For the associated graded we get $\frak
m=\oplus_{i=-k-1}^{-1}\frak m_i$. The dimension of $\frak m_{-1}$ is
$m+1$ (with one dimension corresponding to the negative root space in
$\frak{sl}(2,\Bbb R)$ and the rest corresponding to the tensor product
of the highest weight space in $S^k\Bbb R^{2*}$ with $\Bbb R^m$),
while all lower components of $\frak m$ have dimension $m$. The
subalgebra $\gr_0(\frak g)$ is isomorphic to $(\frak b/[\frak b,\frak
b])\oplus\frak{gl}(m,\Bbb R)$, while $\gr_1(\frak g)$ is
one--dimensional and spanned by the positive root space in
$\frak{sl}(2,\Bbb R)$. From this one immediately verifies that $(\frak
g,P)$ is an admissible pair in the sense of Definition \ref{def2.2}.

\smallskip

It turns out that $\gr(\frak g)$ is \textit{not} the full prolongation
of $(\frak m,\gr_0(\frak g))$ for all possible choices of $k$ and
$m$. Indeed, if $k=1$, then for each $m\geq 1$ the pair $(\frak
m,\gr_0(\frak g))$ is isomorphic to the non--positive part in the
grading of $\frak{sl}(m+2,\Bbb R)$ corresponding to the first two
simple roots. As discussed in Example 2 above, this implies that that
full prolongation of $(\frak m,\gr_0(\frak g))$ is $\frak{sl}(m+2,\Bbb
R)$, whose dimension is strictly larger than $\dim(\frak g)$. This
corresponds to the fact that second order ODEs and systems of second
order ODEs are equivalent to parabolic geometries via the concept of
path geometries, compare with Sections 4.4.3 and 4.4.4 in \cite{book}
and Section 4.7 in \cite{twistor}.

Similarly, if $k=2$ and $m=1$, then $(\frak m,\gr_0(\frak g))$ is
isomorphic to the non--positive part of the grading of
$\frak{sp}(4,\Bbb R)\cong\frak{so}(3,2)$ determined by both simple
roots. Again by Example 2, we conclude that the full prolongation of
$(\frak m,\gr_0(\frak g))$ is $\frak{sp}(4,\Bbb R)$ in this case, and 
this is strictly larger than $\frak g$. This corresponds to Chern's
classical result \cite{Chern} on the geometry of a single third order
ODE up to contact transformations, which in modern language says that
this can be equivalently described as a parabolic geometry.

For all other choices of $k$ and $m$ (i.e.~if either $k\geq 3$ or
$k=2$ and $m\geq 2$), it turns out that $\gr(\frak g)$ is the full
prolongation of $(\frak m,\gr_0(\frak g))$. This is shown in
\cite{DKM}, a short, direct proof based on the cohomological
interpretation from Proposition \ref{prop2.5} can be found in
\cite{CDT}.

\begin{remark}\label{rem2.6}
  Suppose that we have given $\frak m$ and a Lie group $G_0$ together
  with an infinitesimally injective homomorphism
  $G_0\to\Aut_{\gr}(\frak m)$, so there is the concept of filtered
  $G_0$--structures on filtered manifolds which are regular of type
  $\frak m$. The basic philosophy of this article is that an
  infinitesimal homogeneous model for such structures is known in
  advance. If this is not the case, and one has to start from $\frak
  m$ and $G_0$ only, there is a construction principle for such a
  candidate as follows. By assumption, we can view the Lie algebra
  $\frak g_0$ of $G_0$ as a Lie subalgebra of $\frak{der}_{\gr}(\frak
  m)$. Using this, we can make $\frak m\oplus\frak g_0$ into a graded
  Lie algebra. Explicitly, we define the bracket on $\frak
  m\oplus\frak g_0$ by
$$ 
[(X,A),(Y,B)]:=([X,Y]_{\frak m}+A(Y)-B(X),[A,B]_{\frak g_0}).
$$ 
Moreover, the given action on $\frak m$ and the adjoint action on
$\frak g_0$ define an action of $G_0$ on $\frak m\oplus\frak g_0$ by
Lie algebra automorphisms. Following Tanaka, one can now inductively
add components $\frak g_i$ for $i\geq 0$ which make $\frak
m\oplus\frak g_0\oplus \oplus_{i>0}\frak g_i$ into a graded Lie
algebra $\pr(\frak m,\frak g_0)$ which is maximal in a certain sense,
see \cite{Zelenko} for details. This is called the \textit{Tanaka
  prolongation} of $(\frak m,\frak g_0)$. Basically, for each $i$, one
defines $\frak g_i$ as those elements in the space of linear maps from
$\frak m$ to $\frak m\oplus\oplus_{0\leq j<i}\frak g_j$ that are
homogeneous of degree $i$ and satisfy a derivation property.

Now $(\frak m,\frak g_0)$ is said to be of \textit{finite type}, if
this process stops after finitely many steps and thus $\pr(\frak
m,\frak g_0)$ is a finite dimensional graded Lie algebra. Let us
denote by $\nu>0$ the maximal index for which $\frak
g_\nu\neq\{0\}$. We can then put $\frak g:=\pr(\frak m,\frak g_0)$ and
endow it with the filtration induced by the grading, so that $\frak
g\cong\gr(\frak g)$. By construction, $\gr(\frak g)$ then is the full
prolongation of $(\frak m,\frak g_0)$. It also follows readily that
conditions (A) and (B) from Definition \ref{def2.2} (which do not
depend on the group $P$) are automatically satisfied.

To obtain an admissible pair, it thus remains to find a Lie group $P$
with Lie algebra $\oplus_{i\geq 0}\frak g_i$ and an action of $P$ on
$\frak g$ that satisfies property (iii) from Definition \ref{def2.2}.
The basic idea here is to first use the construction of the
prolongation to lift the obvious action of $\frak g_0$ on $\frak g$ to
a group action of $G_0$. Since this action preserves the grading, we
can restrict it to $\frak p_+:=\oplus_{i=1}^\nu\frak g_i$, which
clearly is a nilpotent Lie subalgebra of $\frak g$. Now define $P_+$
to be the simply connected group with Lie algebra $\frak p_+$ and try
to lift the $G_0$--action to an action on $P_+$ by group
automorphisms. If this works, one defines $P$ as the semi--direct
product of $G_0$ and $P_+$, and then one can try to construct a
$P$--action on $\frak g$ from the action of $G_0$ and the given action
of $\frak p_+$. Since this is not the approach we have chosen, we do
not study the precise conditions under which this is possible.
\end{remark}

\section{On normalization conditions}\label{3}

Looking at the construction of the filtered $G_0$--structure
underlying a Cartan geometry in the proof of Theorem \ref{thm2.4}, it
is evident that this underlying structure can never determine the
Cartan geometry uniquely. In fact, one can add any form of positive
homogeneity to a given Cartan connection without changing the induced
underlying structure, see Proposition \ref{prop4.3} for details. To
remove this freedom, one has to impose a normalization condition on
the curvature of the Cartan connection. As we shall see, this is a
purely algebraic problem, which we discuss in this section.

\subsection{The concept of a normalization condition}\label{3.1}
Let us start with an admissible pair $(\frak g,P)$ in the sense of
Definition \ref{def2.2} and let $\{\frak g^i\}_{i=-\mu}^\nu$ be the
corresponding filtration of $\frak g$, so $\frak p=\frak g^0$. Now for
each $k\geq 0$, we can consider the space $L(\La^k(\frak g/\frak
p),\frak g)$ of alternating $k$--linear maps $(\frak g/\frak
p)^k\to\frak g$. Observe that these can be viewed equivalently as
alternating $k$--linear maps $(\frak g)^k\to\frak g$ which vanish
whenever one of their entries lies in the subspace $\frak
p\subset\frak g$.

Now for such maps, there is an obvious notion of homogeneity (in the
sense of filtrations). We say that $\alpha$ is homogeneous of degree
$\geq\ell$ if and only if for any $X_j\in\frak g^{i_j}$ with
$j=1,\dots,k$ and $i_j<0$ for all $j$, we have 
$$
\al(X_1+\frak p,\dots,X_k+\frak p)\in\frak g^{i_1+\dots+i_k+\ell}.
$$
Of course, the maps with this property form a linear subspace
$L(\La^k(\frak g/\frak p),\frak g)^\ell\subset L(\La^k(\frak g/\frak
p),\frak g)$ and by construction, these spaces form a filtration of
the (finite dimensional) vector space $L(\La^k(\frak g/\frak p),\frak
g)$. Now we can nicely describe the associated graded to this filtered
vector space.

\begin{lemma}\label{lem3.1} 
  For $(\frak g,P)$ as above consider the associated graded $\gr(\frak
  g)$ and as before define $\frak m:=\oplus_{i=-\mu}^{-1}\gr_i(\frak
  g)$. Then the quotient $L(\La^k(\frak g/\frak p),\frak
  g)^\ell/L(\La^k(\frak g/\frak p),\frak g)^{\ell+1}$ can be naturally
  identified with the space $C^k(\frak m,\gr(\frak
  g))_\ell=L(\La^k\frak m,\gr(\frak g))_\ell$ of $k$--cochains which
  are homogeneous of degree $\ell$.
\end{lemma}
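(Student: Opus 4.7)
The plan is to construct an explicit ``symbol map''
\[
\si_\ell: L(\La^k(\frak g/\frak p),\frak g)^\ell \to C^k(\frak m,\gr(\frak g))_\ell,
\]
show that its kernel equals $L(\La^k(\frak g/\frak p),\frak g)^{\ell+1}$, and verify surjectivity by means of a splitting of the filtration on $\frak g$.

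First I would define $\si_\ell$. Observe that the filtration of $\frak g$ induces a filtration on $\frak g/\frak p$ by $(\frak g/\frak p)^i := \frak g^i/\frak p$ for $i<0$, and the associated graded is naturally $\frak m$, with $\gr_i(\frak g/\frak p)=\gr_i(\frak g)=\frak m_i$. Given $\al\in L(\La^k(\frak g/\frak p),\frak g)^\ell$ and homogeneous elements $Y_j\in\frak m_{i_j}$ with $i_j<0$, choose representatives $X_j\in\frak g^{i_j}$ with $X_j+\frak g^{i_j+1}=Y_j$ and set
\[
\si_\ell(\al)(Y_1,\dots,Y_k) := \al(X_1+\frak p,\dots,X_k+\frak p) + \frak g^{i_1+\dots+i_k+\ell+1} \in \gr_{i_1+\dots+i_k+\ell}(\frak g).
\]
Well-definedness of the value is the first verification: if $X_j$ is replaced by $X_j+Z$ with $Z\in\frak g^{i_j+1}$, then by the homogeneity hypothesis on $\al$, the difference lies in $\frak g^{i_1+\dots+i_k+\ell+1}$ and dies in the quotient. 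Multilinearity and alternation are immediate from those of $\al$, so $\si_\ell(\al)\in C^k(\frak m,\gr(\frak g))_\ell$.

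Next I would identify the kernel. By the very definition, $\si_\ell(\al)=0$ means that for all choices of $X_j\in\frak g^{i_j}$ one has $\al(X_1+\frak p,\dots,X_k+\frak p)\in\frak g^{i_1+\dots+i_k+\ell+1}$, which is exactly the statement that $\al$ is homogeneous of degree $\geq \ell+1$. Hence $\ker(\si_\ell)=L(\La^k(\frak g/\frak p),\frak g)^{\ell+1}$, and $\si_\ell$ descends to an injection on the quotient.

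The main remaining point is surjectivity, and here I would use a splitting. Choose a graded linear splitting $s:\gr(\frak g)\to\frak g$ of the projection, so that $s(\gr_i(\frak g))$ is a complement to $\frak g^{i+1}$ inside $\frak g^i$; this is purely a question of filtered vector spaces, with no compatibility requirements beyond linearity. Composition with the projection $\pi:\frak g\to\frak g/\frak p$ gives a filtered linear isomorphism $\ta := \pi\o s|_{\frak m}:\frak m\to\frak g/\frak p$. Given $\ph\in C^k(\frak m,\gr(\frak g))_\ell$, define
\[
\al := s\o\ph\o(\ta^{-1})^{\otimes k} \in L(\La^k(\frak g/\frak p),\frak g).
\]
The verification that $\al\in L(\La^k(\frak g/\frak p),\frak g)^\ell$ reduces to noting that $\ta^{-1}$ maps $(\frak g/\frak p)^{i}$ into $\oplus_{j\geq i}\frak m_j$, that $\ph$ is homogeneous of degree $\ell$, and that $s$ maps $\gr_m(\frak g)$ into $\frak g^m$; combining these shows $\al(X_1+\frak p,\dots,X_k+\frak p)\in\frak g^{i_1+\dots+i_k+\ell}$ whenever $X_j\in\frak g^{i_j}$. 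Finally, for homogeneous $Y_j\in\frak m_{i_j}$ one may take $X_j:=s(Y_j)\in\frak g^{i_j}$ as representatives, and then $\si_\ell(\al)(Y_1,\dots,Y_k)$ equals $s(\ph(Y_1,\dots,Y_k))+\frak g^{i_1+\dots+i_k+\ell+1}=\ph(Y_1,\dots,Y_k)$, so $\si_\ell(\al)=\ph$.

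I expect the main obstacle to be purely bookkeeping: keeping straight the three filtered objects ($\frak g$, $\frak g/\frak p$, and the $L(\La^k\cdot,\cdot)$ space) and checking that the degree shifts assemble correctly. Once the symbol map is set up carefully, injectivity on the quotient is tautological and surjectivity is produced by the splitting as above.
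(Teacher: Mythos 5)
Your proposal is correct and follows essentially the same route as the paper: the symbol map defined via representatives, the kernel identification by unwinding the definition of homogeneity, and surjectivity via a choice of linear complements $W_i$ to $\frak g^{i+1}$ in $\frak g^i$ (your splitting $s$ is exactly the inverse of the paper's isomorphism $\ph^W$, and your $\ta$ is the inverse of $\underline{\ph}^W$). No substantive differences.
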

\begin{proof}
  This is a direct verification, compare with Section 3.1.1 of
  \cite{book}. Given a map $\al\in L(\La^r(\frak g/\frak p),\frak
  g)^\ell$ and elements $\tau_j\in\frak m_{i_j}$, put
  $s:=i_1+\dots+i_k+\ell$. Choosing a representative $X_j\in\frak
  g^{i_j}$ of $\tau_j$ for each $j$, we have
  $\al(X_1,\dots,X_k)\in\frak g^s$ by definition, so we can consider
  its class in $\gr_s(\frak g)$. Any other representative $\tilde X_j$
  for $\tau_j$ is of the form $X_j+Y_j$ with $Y_j\in\frak
  g^{i_j+1}$. Homogeneity of $\al$ implies that
  $\al(X_1,\dots,Y_j,\dots,X_k)\in\frak g^{s+1}$ so the class of
  $\al(X_1,\dots,X_k)$ in $\gr_s(\frak g)$ is independent of the
  choice of representatives. Otherwise put, we have associated to
  $\al$ a well defined linear map $\frak m_{i_1}\x\dots\x\frak
  m_{i_k}\to\gr_s{\frak g}$. Taking these maps for all possible
  choices of the $i_j$ together, we obtain a well defined map $\frak
  m^k\to\gr(\frak g)$ induced by $\al$, which by construction is
  alternating and homogeneous of degree $\ell$.

  The construction readily implies that this construction actually
  defines a linear map $L(\La^k(\frak g/\frak p),\frak g)^\ell\to
  L(\La^k\frak m,\gr(\frak g))_\ell$. Moreover, $\al$ lies in the
  kernel of this map if and only if for $X_j\in\frak g^{i_j}$ as
  above, one always has $\al(X_1,\dots,X_k)\in\frak
  g^{i_1+\dots+i_k+\ell+1}$ and thus if and only if $\al$ is
  homogeneous of degree $\geq\ell+1$. So it remains to prove that our
  map is surjective. To see this, we put $W_\nu=\frak g^\nu$ and for
  each $i=-\mu,\dots,\nu-1$, we choose a linear subspace
  $W_i\subset\frak g^i$, which is complementary to $\frak
  g^{i+1}$. Then clearly $\frak g=\oplus_{i=-\mu}^\nu W_i$ as a vector
  space. On the other hand, the canonical projection restricts to a
  linear isomorphism $W_i:\to\gr_i(\frak g)$. Making these choices, we
  have thus constructed a linear isomorphism $\ph^W:\frak
  g\to\gr(\frak g)$, which restricts to a linear isomorphism between
  $\frak g^i$ and $\oplus_{j\geq i}\gr_j(\frak g)$ for each $i$. (The
  inverse of such an isomorphism is commonly called a
  \textit{splitting of the filtration}.) In particular, we get an
  induced isomorphism $\underline{\ph}^W:\frak g/\frak p\to\frak m$
  which also is compatible with the grading.

  Now suppose we have given a $k$--linear alternating map $\be:\frak
  m^k\to\gr(\frak g)$, which is homogeneous of degree $\ell$. Then
  defining $\al:(\frak g/\frak p)^k\to\frak g$ as
  $(\ph^W)^{-1}\o\be\o(\underline{\ph}^W)^k$, it is easy to verify
  that $\al$ is homogeneous of degree $\geq\ell$ and maps to $\be$.
\end{proof}

\begin{definition}\label{def3.1}
  We denote by $\gr_{\ell}:L(\La^k(\frak g/\frak p),\frak g)^\ell\to
  C^k(\frak m,\gr(\frak g))_\ell$ the linear map described in Lemma
  \ref{lem3.1}.
\end{definition}

Initially, we will mainly need this in the case that $k=2$ and
$\ell>0$. So in this case, we associate to a map $\al:\La^2(\frak
g/\frak p)\to\frak g$ which is homogeneous of degree $\geq\ell$ in the
filtration sense the map $\gr_{\ell}(\al):\La^2\frak m\to\gr(\frak
g)$, which is homogeneous of degree $\ell$. Observe that for
$X\in\frak g^i$ and $Y\in\frak g^j$ with $i,j<0$ we have
\begin{equation}\label{grell-def}
  \gr_{\ell}(\al)(\gr_i(X),\gr_j(Y))=\al(X+\frak
  p,Y+\frak p)+\frak g^{i+j+\ell+1}\in\gr_{i+j+\ell}(\frak g).
\end{equation}

Observe also that a filtration on a vector space $V$ induces a
filtration on any linear subspace $W\subset V$, by simply defining
$W^i:=W\cap V^i$. Consequently, we can form the associated graded
vector space to $W$ with respect to this filtration and for each $i$
naturally view $\gr_i(W)$ as a linear subspace of $\gr_i(V)$. Armed
with this observation, we can now formulate the following crucial
definition.

\begin{definition}\label{def-norm}
  Let $(\frak g,P)$ be an admissible pair as in Definition
  \ref{def2.2}, let $\gr(\frak g)$ be the associated graded to $\frak
  g$ and put $\frak m:=\oplus_{i=-\mu}^{-1}\gr_i(\frak g)$. Then a
  \textit{normalization condition} for $(\frak g,P)$ is a
  $P$--invariant linear subspace $\Cal N\subset L(\La^2(\frak g/\frak
  p),\frak g)$ such that for each $\ell>0$ the subspace
  $\gr_{\ell}(\Cal N)\subset C^2(\frak m,\gr(\frak g))_\ell$ is
  complementary to the image of the linear map $\partial:C^1(\frak
  m,\gr(\frak g))_\ell\to C^2(\frak m,\gr(\frak g))_\ell$ defined in
  equation \eqref{partial-def}.
\end{definition}

\begin{remark}\label{rem3.1}
  (1) Observe that the defining properties of a normalization
  conditions take place on two different levels. The condition on
  $P$--invariance concerns the subspace $\Cal N$ in the filtered
  vector space $L(\La^2(\frak g/\frak p),\frak g)$, on which there is
  no well defined Lie algebra cohomology differential. The
  complementarity condition, on the other hand, refers to the image of
  the filtration components of $\Cal N$ in the associated graded, on
  which a substantial part of the $P$--action is lost.

  (2) There is no reason to expect that normalization conditions exist
  for all admissible pairs $(\frak g,P)$. Even though only natural
  ingredients are used in Definition \ref{def-norm}, one has to keep
  in mind that invariant subspaces do not admit invariant complements
  in general, in particular, if $P$ contains a large solvable
  part. However, while existence of normalization conditions is known
  in many cases of interest (see in particular the examples in Section
  \ref{3.4} below), I am not aware of proofs of non--existence of a
  normalization condition in the literature. Let us also remark here,
  that the construction of a canonical absolute parallelism in
  \cite{Zelenko} works without assumptions on invariance of
  normalization conditions, so this can always be applied.
\end{remark}

\subsection{Negligible submodules}\label{3.2}
In what follows, a normalization conditions will describe the allowed
values for the curvature function of a normal Cartan connection. From
examples like parabolic geometries it is known that for some
structures one may pass from the full Cartan curvature to a simpler
geometric object, which still defines a complete obstruction against
local flatness of the geometry. We next introduce the algebraic
background for results of this type.

\begin{definition}\label{def-neg}
Let $(\frak g,P)$ be an admissible pair and let $\Cal N\subset
L(\La^2(\frak g/\frak p),\frak g)$ be a normalization condition for
$(\frak g,P)$. Then a \textit{negligible submodule} in $\Cal N$ is a
$P$--invariant subspace $\tilde{\Cal N}\subset\Cal N$ such that for
each $\ell>0$ the image $\gr_\ell(\tilde{\Cal N})\subset C^2(\frak
m,\gr(\frak g))_\ell$ has trivial intersection with
$\ker(\partial)$. 

We call $\tilde{\Cal N}$ a \textit{maximal} negligible submodule iff
$\gr_\ell(\tilde{\Cal N})$ is complementary to $\ker(\partial)$ for
all $\ell$.
\end{definition}

As discussed in \ref{3.1}, the filtration on $L(\La^2(\frak g/\frak
p),\frak g)$ can be restricted to any linear subspace. In particular,
for a negligible submodule $\tcn\subset\Cal N$, we get
$\tcn^\ell\subset\Cal N^\ell$ for each $\ell>0$. Of course, these
filtrations are preserved by the $P$--action on both modules. In
particular, we get an induced filtration on the quotient modules $\Cal
N/\tcn$, which again is $P$--invariant. In particular, we can form
$\gr_{\ell}(\Cal N/\tcn)$ for each $\ell$.

\begin{prop} 
Let $(\frak g,P)$ be an admissible pair and let $\Cal N$ be a
normalization condition for $(\frak g,P)$. 

(1) For each $\ell>0$, the subspace $\ker(\partial)\cap\gr_{\ell}(\Cal
  N)$ of $C^2(\frak m,\gr(\frak g))$ is linearly isomorphic to the
  degree--$\ell$ component $H^2(\frak m,\gr(\frak g))_\ell$ in the
  second cohomology space. 

  (2) If $\tcn\subset\Cal N$ is a maximal negligible submodule then
  for each $\ell>0$, projection to the associated graded together with
  the map from (1) induces a linear isomorphism $\gr_\ell(\Cal
  N/\tcn)\to H^2(\frak m,\gr(\frak g))_\ell$.
\end{prop}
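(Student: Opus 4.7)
The plan for part (1) is to exploit the direct sum decomposition built into the definition of a normalization condition. For each $\ell>0$, the condition from Definition \ref{def-norm} gives
\[
C^2(\frak m,\gr(\frak g))_\ell = \gr_\ell(\Cal N)\oplus\im(\partial|_{C^1_\ell}).
\]
The natural candidate for the isomorphism is the map sending $\al \in \ker(\partial)\cap\gr_\ell(\Cal N)$ to its cohomology class in $H^2(\frak m,\gr(\frak g))_\ell$. To see surjectivity, I would take a cocycle $\ph\in\ker(\partial|_{C^2_\ell})$ and decompose it uniquely as $\ph=\al+\partial\ps$ with $\al\in\gr_\ell(\Cal N)$ and $\ps\in C^1_\ell$; since $\partial\ph=0$ and $\partial^2\ps=0$, automatically $\partial\al=0$, so $\al$ is an element of $\ker(\partial)\cap\gr_\ell(\Cal N)$ representing $[\ph]$. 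Injectivity is immediate: an element of $\ker(\partial)\cap\gr_\ell(\Cal N)$ that lies in $\im(\partial)$ belongs to $\gr_\ell(\Cal N)\cap\im(\partial)=\{0\}$ by the complementarity condition.

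For part (2) the first step is to identify $\gr_\ell(\Cal N/\tcn)$ as a subquotient of $C^2_\ell$. By construction of the induced filtration one has $\tcn^\ell=\tcn\cap\Cal N^\ell$, so the short exact sequence $0\to\tcn\to\Cal N\to\Cal N/\tcn\to 0$ is strict in the filtered sense. A direct diagram chase (or the standard behavior of $\gr$ on strict filtered exact sequences) then gives a canonical isomorphism
\[
\gr_\ell(\Cal N/\tcn)\cong\gr_\ell(\Cal N)/\gr_\ell(\tcn),
\]
induced by applying $\gr_\ell$ to the projection $\Cal N\to\Cal N/\tcn$.

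The maximality assumption on $\tcn$ means precisely that for every $\ell>0$ one has the direct sum decomposition $\gr_\ell(\Cal N) = \gr_\ell(\tcn)\oplus\bigl(\ker(\partial)\cap\gr_\ell(\Cal N)\bigr)$. Consequently the composition of the inclusion $\ker(\partial)\cap\gr_\ell(\Cal N)\hookrightarrow\gr_\ell(\Cal N)$ with the quotient projection to $\gr_\ell(\Cal N)/\gr_\ell(\tcn)\cong\gr_\ell(\Cal N/\tcn)$ is a linear isomorphism. Composing its inverse with the isomorphism from (1) produces the desired linear isomorphism $\gr_\ell(\Cal N/\tcn)\to H^2(\frak m,\gr(\frak g))_\ell$; explicitly, a class in $\gr_\ell(\Cal N/\tcn)$ is represented by some $\al\in\Cal N^\ell$, whose image $\gr_\ell(\al)$ is well-defined modulo $\gr_\ell(\tcn)$, and the assignment picks out the $\ker(\partial)$--component in the decomposition above and takes its cohomology class.

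There is no substantial obstacle here, as the argument is pure linear algebra using the two complementarity assumptions on $\Cal N$ and $\tcn$. The only point needing a bit of care is the passage from the exactness of $0\to\tcn\to\Cal N\to\Cal N/\tcn\to 0$ to the exactness of the associated graded sequence, which relies on the fact that the filtration on $\tcn$ is the one induced from $\Cal N$ (so $\tcn^\ell=\tcn\cap\Cal N^\ell$), and hence there are no jumps in filtration under the quotient.
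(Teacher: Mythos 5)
Your proof is correct and follows essentially the same route as the paper: part (1) is the identical decomposition argument using $C^2(\frak m,\gr(\frak g))_\ell=\gr_\ell(\Cal N)\oplus\im(\partial)$, and part (2) uses the same complementarity $\gr_\ell(\tcn)\oplus\ker(\partial)$, merely repackaged via the canonical identification $\gr_\ell(\Cal N/\tcn)\cong\gr_\ell(\Cal N)/\gr_\ell(\tcn)$ instead of the paper's explicit element-wise well-definedness check with representatives $\al-\be$. Both arguments rest on exactly the same two complementarity hypotheses, so there is nothing to add.
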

\begin{proof}
  (1) For $\ph\in C^2(\frak m,\gr(\frak g))_\ell$ with
  $\partial\ph=0$, let us denote by $[\ph]$ the cohomology class of
  $\ph$ in $H^2(\frak m,\gr(\frak g))$. By definition of a
  normalization condition, we can write $\ph$ as $\ph_1+\ph_2$ with
  $\ph_1\in\gr_\ell(\Cal N)$ and $\ph_2\in\im(\partial)$. Since
  $\partial\ph_2=0$, we get $\partial\ph_1=0$, and of course
  $[\ph_1]=[\ph]$, so surjectivity is proved. But if
  $\ph\in\gr_\ell(\Cal N)$ satisfies $\partial\ph=0$ and $[\ph]=0$,
  then $\ph\in\im(\partial)$ and hence $\ph=0$ by definition of a
  normalization condition.

  (2) For $\al\in\Cal N^\ell$ consider $\gr_\ell(\al)\in C^2(\frak
  m,\gr(\frak g))_\ell$. By definition of a maximal negligible
  submodule, we can write $\gr_\ell(\al)$ as the sum of an element of
  $\gr_\ell(\tcn)$ and an element of $\ker(\partial)$. Otherwise put,
  there is an element $\be$ in $\tcn^\ell$ such that
  $\gr_{\ell}(\al-\be)\in\ker(\partial)$, so we can form
  $[\gr_{\ell}(\al-\be)]\in H^2(\frak m,\gr(\frak g))_\ell$. If
  $\tilde\be\in\tcn^\ell$ is another element such that
  $\gr_{\ell}(\al-\tilde\be)\in\ker(\partial)$ then
  $\be-\tilde\be\in\tcn^\ell$ and
  $\gr_\ell(\be-\tilde\be)\in\ker(\partial)$, so
  $\gr_{\ell}(\be-\tilde\be)=0$. Of course, the cohomology class also
  remains unchanged if we add an element of $\Cal N^{\ell+1}$ to
  $\al$.

Hence the element $\gr_{\ell}(\al-\be)$ depends only on the class of
$\al$ in $\gr_\ell(\Cal N/\tcn)$ and we have defined a map
$\gr_\ell(\Cal N/\tcn)\to H^2(\frak m,\gr(\frak g))_\ell$ which is
surjective by part (1). On the other hand, starting from $\al\in\Cal
N^\ell$ the result of our map is zero if and only if there is an
element $\be\in\tcn^\ell$ such that
$\gr_{\ell}(\al-\be)\in\im(\partial)$. By definition of a
normalization condition, this is equivalent to
$\gr_{\ell}(\al-\be)=0$, which exactly means that the class of $\al$
in $\gr_\ell(\Cal N/\tcn)$ is trivial.
\end{proof}

\subsection{Codifferentials}\label{3.3}
A method to obtain a normalization condition in many applications is
via a so--called codifferential. To introduce this concept, we need
some preliminary considerations. Suppose that $(V,\{V^i\})$ and
$(W,\{W^i\})$ are filtered vector spaces and that $\Ph:V\to W$ is a
linear map which is compatible with the filtrations, i.e.~such that
$\Ph(V^i)\subset W^i$ for all $i$.  Then $\Ph$ induces a linear map on
the associated graded vector space, which preserves homogeneities,
i.e.~itself is homogeneous of degree $0$. We denote this map by
$\gr_0(\Ph):\gr(V)\to\gr(W)$ and observe that for each $v\in V^i$ we
get $\gr_0(\Ph)(\gr_i(v))=\gr_i(\Ph(v))\in\gr_i(W)$.

Now $\ker(\Ph)\subset V$ and $\im(\Ph)\subset W$ inherit filtrations,
so we have $\ker(\Ph)^i=\ker(\Ph)\cap V^i$ and
$\gr_i(\ker(\Ph))\subset\gr_i(V)$ and likewise for $\im(\Ph)$. To have
these spaces nicely related to $\gr_0(\Ph)$ an additional technical
condition is needed. 

\begin{definition}\label{def3.3}
  Let $(V,\{V^i\})$ and $(W,\{W^i\})$ be filtered vector spaces and
  let $\Ph:V\to W$ be a linear map which is compatible with the
  filtrations. Then we say that $\Ph$ is \textit{image--homogeneous}
  if and only if for each $i$, and any element $w\in\im(\Ph)\cap W^i$
  there is an element $v\in V^i$ such that $w=\Ph(v)$ or,
  equivalently, iff $\im(\Ph)^i=\Ph(V^i)$ for all $i$.
\end{definition}

\begin{lemma}\label{lem3.3}
  Let $(V,\{V^i\})$ and $(W,\{W^i\})$ be filtered vector spaces and
  let $\Ph:V\to W$ be a linear map which is compatible with the
  filtrations and image--homogeneous. Then for each $i$, the subspaces
  $\gr_i(\ker(\Ph))\subset\gr_i(V)$ and
  $\gr_i(\im(\Ph))\subset\gr_i(W)$ coincide with the kernel and the
  image of $\gr_0(\Ph):\gr_i(V)\to\gr_i(W)$.
\end{lemma}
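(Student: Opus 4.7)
The plan is to unwind what the inclusions $\gr_i(\ker(\Ph))\subset\gr_i(V)$ and $\gr_i(\im(\Ph))\subset\gr_i(W)$ look like as explicit subspaces, and then verify each of the two claimed equalities by mutual inclusion. Throughout, the two filtration--preservation inclusions $\Ph(V^i)\subset W^i$ and $\Ph(V^{i+1})\subset W^{i+1}$ are automatic and produce the maps involved; image--homogeneity will do the real work.

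First I would dispose of the image statement. By definition, $\gr_i(\im(\Ph))=(\im(\Ph)\cap W^i)/(\im(\Ph)\cap W^{i+1})$, and the natural inclusion into $\gr_i(W)=W^i/W^{i+1}$ identifies this with the subspace $(\im(\Ph)\cap W^i+W^{i+1})/W^{i+1}$. On the other hand, the image of $\gr_0(\Ph)|_{\gr_i(V)}$ is tautologically $(\Ph(V^i)+W^{i+1})/W^{i+1}$. The inclusion $\Ph(V^i)\subset\im(\Ph)\cap W^i$ is automatic from filtration preservation, and image--homogeneity (applied at level $i$) gives the reverse inclusion on the nose, namely $\im(\Ph)\cap W^i=\Ph(V^i)$. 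So the two subspaces of $\gr_i(W)$ coincide.

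Next I would handle the kernel statement. Analogously, $\gr_i(\ker(\Ph))$ sits inside $\gr_i(V)$ as $(\ker(\Ph)\cap V^i+V^{i+1})/V^{i+1}$, while $\ker\bigl(\gr_0(\Ph)|_{\gr_i(V)}\bigr)$ unwinds to the set of classes $v+V^{i+1}$ with $v\in V^i$ and $\Ph(v)\in W^{i+1}$. One inclusion is immediate: any $v\in\ker(\Ph)\cap V^i$ satisfies $\Ph(v)=0\in W^{i+1}$, and adding an element of $V^{i+1}$ does not change the class or violate the condition.

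The main obstacle --- and the only place where image--homogeneity actually enters the kernel argument --- is the reverse inclusion. Given $v\in V^i$ with $\Ph(v)\in W^{i+1}$, the element $\Ph(v)$ lies in $\im(\Ph)\cap W^{i+1}$; image--homogeneity applied at level $i+1$ then produces some $v'\in V^{i+1}$ with $\Ph(v')=\Ph(v)$. Setting $v_0:=v-v'$, we have $v_0\in\ker(\Ph)\cap V^i$ and $v\equiv v_0\pmod{V^{i+1}}$, so the class $v+V^{i+1}$ lies in $\gr_i(\ker(\Ph))$ as required. Without image--homogeneity this step would fail: one could have $\Ph(v)\in W^{i+1}$ without being able to cancel it using a lift from $V^{i+1}$, and the kernel of $\gr_0(\Ph)|_{\gr_i(V)}$ would strictly contain $\gr_i(\ker(\Ph))$.
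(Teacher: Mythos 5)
Your proof is correct and follows essentially the same route as the paper's: the easy inclusions come from filtration preservation, and the substantive step in both arguments is to use image--homogeneity at level $i+1$ to replace a representative $v\in V^i$ with $\Ph(v)\in W^{i+1}$ by $v-v'\in\ker(\Ph)$ for some $v'\in V^{i+1}$ with $\Ph(v')=\Ph(v)$. Your version is just slightly more explicit about identifying $\gr_i(\ker(\Ph))$ and $\gr_i(\im(\Ph))$ as concrete subspaces of the quotients, which is harmless.
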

\begin{proof}
  By definition $v\in\ker(\Ph)^i$ satisfies $v\in V^i$ and $\Ph(v)=0$,
  so $\gr_i(v)\in\gr_i(V)$ satisfies $\gr_0(\Ph)(\gr_i(v))=0$, so
  $\gr_i(\ker(\Ph))\subset \ker(\gr_0(\Ph))$. Conversely, a class in
  $\gr_i(V)$ lies in the kernel of $\gr_0(\Ph)$ if and only if it is
  represented by an element $v\in V^i$ such that $\Ph(v)\in
  W^{i+1}$. By image homogeneity, there exists an element $\tilde v\in
  V^{i+1}$ such that $\Ph(\tilde v)=\Ph(v)$, so $v-\tilde v\in V^i$
  lies in $\ker(\Ph)$ and represents the same class in
  $\gr_i(V)$. This completes the proof for the kernel.

  For the image, the argument is similar. Since for $v\in V^i$ we have
  $\gr_0(\Ph)(\gr_i(v))=\gr_i(\Ph(v))$, we see that the image of
  $\gr_0(\Ph)$ in $\gr_i(W)$ is contained in $\gr_i(\im(\Ph))$. The
  other inclusion follows directly from the definition of an
  image--homogeneous map.
\end{proof}

We will mainly apply this to a map $\Ph$ between spaces of the form
$L(\La^k(\frak g/\frak p),\frak g)$. For such maps, being filtration
preserving just means being compatible with homogeneities of
multilinear maps, so if $\al$ is homogeneous of degree $\geq\ell$,
also $\Ph(\al)$ is homogeneous of degree $\geq\ell$. Moreover, in view
of Lemma \ref{3.1}, the map $\gr_0(\Ph)$ in such a case maps between
the corresponding spaces of the form $C^k(\frak m,\gr(\frak g))$.

\begin{definition}\label{def-codiff}
Let $(\frak g,P)$ be a regular pair. Then a \textit{codifferential}
for $(\frak g,P)$ consists of maps $\partial^*:L(\La^k(\frak
g/\frak p),\frak g)\to L(\La^{k-1}(\frak g/\frak p),\frak g)$ for
$k=2,3$ such that 
\begin{itemize}
\item Both maps are $P$--equivariant, compatible with homogeneities
  and image--homogeneous, and they satisfy $\partial^*\o\partial^*=0$.
\item The induced linear maps $\gr_0(\partial^*):C^k(\frak m,\gr(\frak
  g))\to C^{k-1}(\frak m,\gr(\frak g))$ are \textit{disjoint} to
  $\partial$, in the sense that in $C^k(\frak m,\gr(\frak g))$ we have
  $\ker(\gr_0(\partial^*))\cap \im(\partial)=\{0\}$ for $k=2,3$ and
  $\im(\gr_0(\partial^*))\cap\ker(\partial)=\{0\}$ for $k=1,2$.
\end{itemize}
\end{definition}

\begin{prop}\label{prop3.3}
  Let $(\frak g,P)$ be an admissible pair and suppose that
  $\partial^*$ is a co\-dif\-feren\-tial for $(\frak g,P)$. Then $\Cal
  N:=\ker(\partial^*)\subset L(\La^2(\frak g/\frak p),\frak g)$ is a
  normalization condition for $(\frak g,P)$ and
  $\tcn:=\im(\partial^*)\subset \Cal N$ is a maximal negligible
  submodule.
\end{prop}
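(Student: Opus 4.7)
The plan is to reduce everything to an application of Lemma \ref{lem3.3} together with a short Hodge-theoretic argument that uses the disjointness conditions as a substitute for self-adjointness. First I would dispatch the purely formal parts: $\Cal N = \ker(\partial^*)$ and $\tcn = \im(\partial^*)$ are $P$-invariant because $\partial^*$ is $P$-equivariant, and $\tcn \subset \Cal N$ because $\partial^* \circ \partial^* = 0$. Next, since $\partial^*$ is filtration-compatible and image-homogeneous, Lemma \ref{lem3.3} applied to $\partial^*:L(\La^2(\frak g/\frak p),\frak g)\to L(\La^1(\frak g/\frak p),\frak g)$ identifies $\gr_\ell(\Cal N)$ with $\ker(\gr_0(\partial^*))$ inside $C^2(\frak m,\gr(\frak g))_\ell$. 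Applying the same lemma to $\partial^*:L(\La^3,\frak g)\to L(\La^2,\frak g)$ identifies $\gr_\ell(\tcn)$ with $\im(\gr_0(\partial^*))$ as a subspace of $C^2(\frak m,\gr(\frak g))_\ell$. Moreover, since $\gr_0$ is functorial, $\partial^*\circ\partial^*=0$ passes to $\gr_0(\partial^*)\circ\gr_0(\partial^*)=0$, so $\gr_\ell(\tcn)\subset\gr_\ell(\Cal N)$.

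The remaining content reduces to the following elementary lemma, which I would prove by a direct dimension count: given finite-dimensional $V,W$ and linear maps $A:V\to W$, $B:W\to V$ with $\ker(A)\cap\im(B)=\{0\}$ and $\im(A)\cap\ker(B)=\{0\}$, the restrictions $A|_{\im(B)}$ and $B|_{\im(A)}$ are both injective, forcing $\dim\im(A)=\dim\im(B)$, and therefore $V=\ker(A)\oplus\im(B)$ and $W=\ker(B)\oplus\im(A)$. I would invoke this lemma twice. Applied with $A=\gr_0(\partial^*):C^2_\ell\to C^1_\ell$ and $B=\partial:C^1_\ell\to C^2_\ell$ (using disjointness for $k{=}2$ first and $k{=}1$ second), it yields $C^2_\ell=\gr_\ell(\Cal N)\oplus\im(\partial)$, which is exactly the defining complementarity of a normalization condition. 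Applied with $A=\gr_0(\partial^*):C^3_\ell\to C^2_\ell$ and $B=\partial:C^2_\ell\to C^3_\ell$ (using disjointness for $k{=}3$ first and $k{=}2$ second), it yields the decomposition $C^2_\ell=\ker(\partial)\oplus\gr_\ell(\tcn)$; in particular, the trivial intersection $\gr_\ell(\tcn)\cap\ker(\partial)=\{0\}$ gives negligibility.

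Finally, maximality amounts to $\gr_\ell(\Cal N)=\gr_\ell(\tcn)\oplus\bigl(\gr_\ell(\Cal N)\cap\ker(\partial)\bigr)$. Trivial intersection is a special case of negligibility. For the spanning direction, given $\xi\in\gr_\ell(\Cal N)$, I would use the second Hodge decomposition above to write $\xi=\xi_1+\xi_2$ with $\xi_1\in\ker(\partial)$ and $\xi_2\in\gr_\ell(\tcn)$; since $\gr_\ell(\tcn)\subset\gr_\ell(\Cal N)$, the element $\xi_1=\xi-\xi_2$ automatically lies in $\gr_\ell(\Cal N)\cap\ker(\partial)$, completing the decomposition. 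The hardest part conceptually is isolating the correct pair of disjointness conditions to invoke for each Hodge-style decomposition and tracking which complex ($k=2$ versus $k=3$) they refer to; but once the bookkeeping is set up, the argument is forced by the elementary lemma above and the identifications coming from Lemma \ref{lem3.3}.
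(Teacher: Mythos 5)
Your proposal is correct. The skeleton is the same as the paper's: the formal parts ($P$-invariance from equivariance, $\tcn\subset\Cal N$ from $\partial^*\circ\partial^*=0$), the identification of $\gr_\ell(\Cal N)$ and $\gr_\ell(\tcn)$ with $\ker(\gr_0(\partial^*))$ and $\im(\gr_0(\partial^*))$ via Lemma \ref{lem3.3}, and the reduction of both complementarity statements to the disjointness hypotheses in each fixed homogeneity $C^2(\frak m,\gr(\frak g))_\ell$. Where you differ is the final linear-algebra step. The paper shows that the two disjointness conditions force $\partial\circ\gr_0(\partial^*)$ to be an injective, hence invertible, endomorphism of $\im(\partial)$, and then writes down the splitting of an arbitrary $\tau\in C^2_\ell$ explicitly using the inverse $\Ps$ (and symmetrically for $\gr_0(\partial^*)\circ\partial$ on $\im(\gr_0(\partial^*))$ to get maximality). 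You instead package the same information into an abstract rank–nullity lemma for a pair of maps $A:V\to W$, $B:W\to V$ with $\ker A\cap\im B=0$ and $\im A\cap\ker B=0$, which yields $V=\ker A\oplus\im B$ and $W=\ker B\oplus\im A$ by a dimension count. Both arguments use finite-dimensionality in an essential way (the paper through ``injective endomorphism $\Rightarrow$ isomorphism''); yours is slightly more symmetric and makes the two applications ($k=2$ versus $k=3$ level, with the correct disjointness indices, which you track accurately) visibly instances of one statement, at the cost of not producing the explicit projection operator that the paper's $\Ps$ gives. Your closing paragraph restating maximality as a decomposition of $\gr_\ell(\Cal N)$ is redundant — the second application of your lemma already delivers $C^2_\ell=\ker(\partial)\oplus\gr_\ell(\tcn)$, which is exactly the paper's definition of a maximal negligible submodule — but it is not wrong.
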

\begin{proof}
  Since both maps $\partial^*$ are $P$--equivariant, $\Cal N$ and
  $\tcn$ are $P$--invariant subspaces in $L(\La^2(\frak g/\frak
  p),\frak g)$ and since $\partial^*\o\partial^*=0$, we get
  $\tcn\subset\Cal N$. By Lemma \ref{lem3.3}, we know that, for each
  $\ell>0$, the spaces $\gr_{\ell}(\Cal N)$ and $\gr_\ell(\tcn)$
  coincide with the kernel and the image of $\gr_0(\partial^*)$ in
  $C^2(\frak m,\gr(\frak g))_\ell$, respectively. Now by the
  disjointness assumption, we get $\gr_{\ell}(\Cal
  N)\cap\im(\partial)=\{0\}$ and
  $\gr_{\ell}(\tcn)\cap\ker(\partial)=\{0\}$.

  Disjointness also implies that on $C^2(\frak m,\gr(\frak g))_\ell$,
  we have $\ker(\gr_0(\partial^*)\o\partial)=\ker(\partial)$ and
  $\ker(\partial\o\gr_0(\partial^*))=\ker(\gr_0(\partial^*))$. Together
  these two equations show that, viewed as an endomorphism of
  $\im(\partial)\subset C^2(\frak m,\gr(\frak g))_\ell$, the map
  $\partial\o\gr_0(\partial^*)$ has trivial kernel and thus has to be
  an isomorphism. Hence there is a linear isomorphism $\Ps$ from
  $\im(\partial)$ to itself, which is inverse to
  $\partial\o\gr_0(\partial^*)$ on that subspace. Now writing
$$
\tau=\left(\tau-(\Ps\o \partial\o\gr_0(\partial^*)(\tau)\right)+
\Ps\o \partial\o\gr_0(\partial^*)(\tau),
$$ 
the last summand lies in $\im(\partial)$ and the first part lies in
$\ker(\partial\o\gr_0(\partial^*))=\ker(\gr_0(\partial^*))=\gr_{\ell}(\Cal
N)$. This completes the proof that $\Cal N$ is a normalization
condition.

In the same way, one verifies that $\gr_0(\partial^*)\o\partial$
restricts to a linear isomorphism from
$\im(gr_0(\partial^*))=\gr_\ell(\tcn)$ to itself. Using an inverse,
one shows as above that any element of $C^2(\frak m,\gr(\frak
g))_\ell$ can be written as a sum of an element of $\gr_{\ell}(\tcn)$
and an element of $\ker(\partial)$, which completes the proof.
\end{proof}

\subsection{Examples}\label{3.4} 
\textbf{1. Vanishing prolongation}: In this case, it is often easy to
find direct constructions of normalization conditions or of
codifferentials. What simplifies things is that neither the difference
between filtered and associated graded modules nor the difference
between $P$ and $P/P_+$ plays a role in this situation. We discuss two
different constructions of this type.

Let us first consider the situation related to sub--Riemannian
geometry as discussed in Example 1 of Section \ref{2.6}. So we assume
that $\frak m=\oplus_{i=-\mu}^{-1}\frak m_i$ is a fundamental graded
nilpotent Lie algebra, we fix a positive definite inner product on
$\frak m_{-1}$, and define $\frak p\subset\frak{der}_{\gr}(\frak m)$
to be the Lie algebra of those derivations whose restrictions to
$\frak g_{-1}$ is skew symmetric. Now assume that $P$ is a Lie group
with Lie algebra $\frak p$ which acts on $\frak m$ by automorphisms
preserving the grading and the inner product on $\frak m_{-1}$ in the
obvious sense.

Since $\frak m$ is fundamental, the Lie bracket defines a surjection
$\La^2\frak m_{-1}\to\frak m_{-2}$, which by definition is
$P$--equivariant, and we denote by $\La^2_0\frak m_{-1}$ its
kernel. The $P$--invariant inner product on $\frak m_{-1}$ induces a
$P$--invariant inner product on $\La^2\frak m_{-1}$. The above
surjection can be used to identify $\frak m_{-2}$ as a $P$--module
with the orthocomplement of $\La^2_0\frak m_{-1}$, which induces a
$P$--invariant inner product on $\frak m_{-2}$. Similarly, the bracket
defines a surjection $\frak m_{-1}\otimes\frak m_{-2}\to\frak m_{-3}$
which can be used to define a $P$--invariant inner product on $\frak
m_{-3}$ and so on until we have constructed a $P$--invariant inner
product on all of $\frak m$. Since $\frak p\subset\frak{so}(\frak
m_{-1})$, we can restrict the negative of the Killing form to obtain a
positive definite, $P$--invariant inner product on $\frak p$. Hence we
also obtain an inner product on $\frak g=\frak m\oplus\frak
p\cong\gr(\frak g)$.

Putting these together, we get an induced inner product on each of the
spaces $C^k(\frak m,\gr(\frak g))$, which in this simple situation are
isomorphic to $L(\La^k(\frak g/\frak p),\frak g)$ as $P$--modules. In
this simple case it is also clear that the Lie algebra cohomology
differentials $\partial$ are $P$--equivariant, so we can simply take
their adjoints with respect to the $P$--invariant inner products to
define codifferentials $\partial^*$ in the sense of Definition
\ref{def-codiff}.

Another type of direct construction can be used in the case related to
dual Darboux distributions discussed in Example 1 of Section
\ref{2.6}. Here $\frak m=\frak m_{-2}\oplus\frak m_{-1}$, with $\frak
m_{-1}$ even dimensional and $\frak m_{-2}\subset\La^2\frak m_{-1}$
the kernel of a non--degenerate skew symmetric bilinear form on $\frak
m_{-1}$. Here one defines $P:=\Aut_{\gr}(\frak m)\cong CSp(\frak
m_{-1})$, so this is a reductive group with one--dimensional center
and semisimple part isomorphic to $Sp(\frak m_{-1})$. So again $\frak
g\cong\gr(\frak g)$ and $C^k(\frak m,\gr(\frak g))\cong L(\La^k(\frak
g/\frak p),\frak g)$ as a $P$--module for each $k$. Now $C^2(\frak
m,\gr(\frak g))$ can be analyzed as a representation of the semisimple
part of $P$. The center of $P$ is generated by the grading element,
which acts as $-\id$ on $\frak m_{-1}$, which easily implies that it
acts by a scalar on each subrepresentation of $C^2(\frak m,\gr(\frak
g))$ sitting in fixed homogeneity. This in particular applies to each
irreducible component for the semisimple part. Hence each of these
irreducible components is $P$--invariant.

This readily implies that any $P$--invariant subspace in $C^2(\frak
m,\gr(\frak g))$ admits a $P$--invariant complement. So taking a
$P$--invariant complement to $\im(\partial)$, we obtain a
normalization condition $\Cal N$ for $(\frak g,P)$. Likewise, we can
take an invariant complement $\tcn$ to the $P$--invariant subspace
$\ker(\partial)\cap\Cal N$, to obtain a maximal negligible submodule
in $\Cal N$. It turns out that in the decomposition of $C^2(\frak
m,\gr(\frak g))$ into irreducibles higher multiplicities occur, so
there are several possible choices for $\Cal N$. A concrete example of
a normalization condition is described in the thesis \cite{deZanet}. 

\smallskip

\textbf{2. Parabolics}: For a Lie algebra $\frak g$, there is a
standard complex computing Lie algebra homology with coefficients in a
representation $V$ of $\frak g$. The spaces in this complex are
defined as $C_k(\frak g,V):=\La^k\frak g\otimes V$. The differentials
in the complex, which we denote by $\delta=\delta_V$, lower degree by
one and are explicitly given by
\begin{equation}\label{homology-def} 
\begin{aligned}
\delta(A_1&\wedge\dots\wedge A_k\otimes v):=
\textstyle\sum_{i=1}^k(-1)^iA_1\wedge\dots\wedge\widehat{A_i}\wedge
\dots\wedge A_k\otimes A_i\cdot v\\
&+\textstyle\sum_{i<j}(-1)^{i+j}[A_i,A_j]\wedge
A_1\wedge\dots\wedge\widehat{A_i}\wedge\dots\wedge
\widehat{A_j}\wedge\dots\wedge A_k\otimes v. 
\end{aligned}
\end{equation}
Of course, the spaces $C_k(\frak g,V)$ are naturally $\frak
g$--modules, and from the definition it follows easily that the
differentials $\delta$ are $\frak g$--equivariant. If $\frak
h\subset\frak g$ is a Lie subalgebra, then $V$ is a representation of
$\frak h$ by restriction. By definition, $C_k(\frak h,V)\subset
C_k(\frak g,V)$ and the Lie algebra homology differential for $\frak
h$ coincides with the restrictions of the differential for $\frak g$. 

Now suppose that $\frak g$ is semisimple, $\frak p\leq\frak g$ is a
parabolic subalgebra with nilradical $\frak p_+\subset\frak p$, and
let $P$ be a parabolic subgroup corresponding to $\frak p$ as in
Example 2 of Section \ref{2.6}. There we have noted that $\frak p_+$
is the annihilator of $\frak p$ with respect to the Killing form and
an ideal and a $P$--invariant subspace in $\frak p$. Via the adjoint
action, $\frak g$ is a representation of $\frak p_+$, so as above we
get $C_k(\frak p_+,\frak g)\subset C_k(\frak g,\frak g)$ for each $k$
and the homology differential on this subspace. Via the adjoint
action, these spaces are $P$--submodules, and from the definitions it
follows that the differentials are $P$--equivariant.

Since $\frak p_+$ is the annihilator of $\frak p$, the Killing form
induces a non--degenerate pairing between $\frak g/\frak p$ and $\frak
p_+$ which is compatible with the $P$--actions. Hence for each $k$, we
obtain an isomorphism $L(\La^k(\frak g/\frak p),\frak g)\cong
C_k(\frak p_+,\frak g)$ of $P$--modules. Hence we can view $\delta$ as
a $P$--equivariant map
$$
\partial^*:L(\La^k(\frak g/\frak p),\frak g)\to L(\La^{k-1}(\frak
g/\frak p),\frak g)
$$ 
for each $k$, which in this context usually is called the
\textit{Kostant--codifferential}. It turns out that specializing to
$k=1,2$, we indeed get a codifferential in the sense of Definition
\ref{def-codiff} for any semisimple Lie algebra $\frak g$ and any
parabolic subalgebra $\frak p$, see Section 3.1.11 of \cite{book}. The
resulting normalization conditions are the basis for the proof of
existence of canonical Cartan connections for parabolic geometries in
Section 3.1 of \cite{book}. The fact that the codifferentials for all
parabolic subalgebras are induced by the Lie algebra homology
differential of $\frak g$ is important in the theory of correspondence
spaces and twistor spaces for parabolic geometries, see
\cite{twistor}.

\smallskip

\textbf{3. Algebras related to (systems of) ODEs}: As discussed in
Example 3 of \ref{2.6}, the relevant groups here are $G=(SL(2,\Bbb
R)\times GL(m,\Bbb R))\ltimes V^m_k$ and $P=B\x GL(m,\Bbb R)$. Here
$B\subset SL(2,\Bbb R)$ denotes the Borel subgroup and $V^m_k$ is the
tensor product of the irreducible representation of $SL(2,\Bbb R)$ of
dimension $k+1$ with the standard representation $\Bbb R^m$ of
$GL(m,\Bbb R)$. The normalization condition in this case can also be
expressed by a codifferential. The construction is described in detail
in \cite{CDT} and we just briefly sketch how things work. Let us
denote by $\th$ the standard Cartan involutions $A\mapsto (A^{-1})^*$
on $SL(2,\Bbb R)$, on $GL(m,\Bbb R)$, and on the product of these two
groups. Now on any irreducible representation of either of the
groups, there is a positive definite inner product which is compatible
with the group action in the sense that $\langle A\cdot
v,w\rangle=\langle v,\th(A)\cdot w\rangle$. Doing this for the
representations $V^1_k$ and $\Bbb R^m$, one obtains an induced inner
product on the tensor product $V^m_k$. Together with the inner
products on the Lie algebras of the two groups obtained in the same
way, one gets an inner product on $\frak g$.

This inner product then induces inner products on the spaces
$C^i(\frak g,\frak g)$ of alternating multilinear maps, which naturally
are representations of $G$. Still, these inner products are compatible
with the action of the subgroup $SL(2,\Bbb R)\x GL(m,\Bbb R)$ up to
the action of $\th$. Now for $i=1,2$, consider the Lie algebra
cohomology differential $\partial_{\frak g}:C^i(\frak g,\frak g)\to
C^{i+1}(\frak g,\frak g)$. Since these involve only the Lie bracket on
$\frak g$, they are immediately seen to be $G$--equivariant. Now one
forms the adjoint maps with respect to the inner products defined
above. Simple direct computations show that for $i=1,2$ the adjoint
maps the subspace $L(\La^{i+1}(\frak g/\frak p),\frak g)$ of
horizontal forms to $L(\La^i(\frak g/\frak p),\frak g)$ and that it is
filtration--preserving and $P$--equivariant.

To see that these maps indeed define a codifferential in the sense of
Definition \ref{def-codiff}, one proceeds as follows. The grading of
$\frak g$ defining the filtration is orthogonal with respect to the
inner product constructed above. Hence we can also view this as an
inner product on $\gr(\frak g)$, which can then be restricted to
$\frak m$ and in turn induces inner products on the spaces $C^i(\frak
m,\gr(\frak g))$. Then one easily verifies directly that the maps on
the spaces $C^i(\frak m,\gr(\frak g))$ induced by the adjoints from
above are the adjoints for the Lie algebra cohomology differential
$\partial_{\frak m}$ with respect to the inner product we have just
constructed. This readily implies disjointness while image homogeneity
can be easily verified directly.

\section{Canonical Cartan connections}\label{4} 

In this section we prove the main results of the article, which lead
to existence and uniqueness of canonical Cartan connections. We first
develop the necessary calculus for $\frak g$--valued differential
forms on principal $P$--bundles, and prove basic results on Cartan
connections. Given a normalization condition and a negligible
submodule, we next develop a notion of ``essential curvature''
(generalizing the concept of harmonic curvature) and prove that
essential curvature vanishes if and only if the full curvature
vanishes.

The main technical results are based on the idea of ``normalizing''
Cartan connections and they need only weak assumptions. The first step
works for any admissible pair $(\frak g,P)$, the only requirement is a
normalization condition in the sense of Definition
\ref{def-norm}. Assuming this, we prove that any regular Cartan
connection can be modified to a normal Cartan connection without
losing regularity or changing the underlying filtered
$G_0$--structure. For the second step, we have to assume that $(\frak
g,P)$ is an infinitesimal homogeneous model, i.e.~that $\gr(\frak g)$
is the full prolongation of $(\frak m,\frak g_0)$. Assuming this, we
prove that any principal bundle map between regular normal Cartan
geometries, which induces an isomorphism between the underlying
filtered $G_0$--structures can be modified to an isomorphism of the
Cartan geometries.

To obtain canonical Cartan connections associated to filtered
$G_0$--structures from these results, we need an additional condition
on the algebraic and topological structure of the group
$P$. Basically, this is needed to prove that any principal
$G_0$--bundle can be obtained as a quotient of a principal $P$--bundle
as well as existence of lifts of principal bundle maps.

\subsection{The covariant exterior derivative}\label{4.1} 
Consider an admissible pair $(\frak g,P)$ as in Definition
\ref{def2.2} and a regular Cartan geometry $(p:\Cal G\to M,\om)$ of
type $(\frak g,P)$ as in Section \ref{2.4}. So by definition, $p:\Cal
G\to M$ is a principal $P$--bundle and $\om\in\Om^1(\Cal G,\frak g)$
is a Cartan connection whose curvature has positive homogeneity. As we
have seen in Section \ref{2.4}, this gives rise to a filtration of
$T\Cal G$ by smooth subbundles $T^i\Cal G$ for $i=-\mu,\dots,k$
characterized by $T^i\Cal G=\om^{-1}(\frak g^i)$.

For $k\geq 0$ consider the space $\Om^k(\Cal G,\frak g)$ of $\frak
g$--valued $k$--forms on $\Cal G$. This comes with a natural notion of
homogeneity. We say that $\ph\in\Om^k(\Cal G,\frak g)$ is
\textit{homogeneous of degree} $\geq\ell$ if and only if for tangent
vectors $\xi_j\in T^{i_j}\Cal G$ we always have
$\ph(\xi_1,\dots,\xi_k)\in\frak g^{i_1+\dots+i_k+\ell}$. We will write
$\Om^k(\Cal G,\frak g)^\ell$ for the space of forms which are
homogeneous of degree $\geq\ell$. We will be particularly interested
in forms which are \textit{horizontal} in the sense that they vanish
if one of their entries comes from the vertical subbundle of $p:\Cal
G\to M$, which by definition coincides with $T^0\Cal G$. The space of
horizontal $k$--forms will be denoted by $\Om^k_{\hor}(\Cal G,\frak
g)$ and likewise, we use the notation $\Om^k_{\hor}(\Cal G,\frak
g)^\ell$. Finally, a form $\ph\in\Om^k(\Cal G,\frak g)$ is called
\textit{$P$--equivariant} if for any element $g\in P$ with principal
right action $r^g$ on $\Cal G$, we have
$(r^g)^*\ph=\Ad(g^{-1})\o\ph$. We will denote the space of equivariant
forms by $\Om^k(\Cal G,\frak g)_P$ and combine this in the obvious way
with the other notations we have just introduced.

In this notation, the Cartan connection $\om$ itself is an element of
$\Om^1(\Cal G,\frak g)^0_P$ and its curvature $K$ as introduced in
Section \ref{2.4} lies in $\Om^2_{\hor}(\Cal G,\frak g)_P$. Regularity
of $\om$ by definition is equivalent to the fact that $K$ is
homogeneous of degree $\geq 1$ and thus lies in the subspace
$\Om^2_{\hor}(\Cal G,\frak g)^1_P$. 

\begin{definition}\label{def-covext}
Let $(p:\Cal G\to M,\om)$ be a Cartan geometry of type $(\frak
g,P)$. Then for each $r\geq 0$, we define the \textit{covariant
  exterior derivative} $d^\om:\Om^k(\Cal G,\frak g)\to\Om^{k+1}(\Cal
G,\frak g)$ by defining $d^\om\ph(\xi_0,\dots,\xi_k)$ for vector
fields $\xi_0,\dots,\xi_k\in\frak X(\Cal G)$ as 
\begin{equation}\label{covext}
d\ph(\xi_0,\dots,\xi_k)+\textstyle\sum_{i=0}^k(-1)^i[\om(\xi_i),\ph(\xi_0,\dots,
\widehat{\xi_i},\dots,\xi_k)], 
\end{equation}
where the hat denotes omission and the bracket is in $\frak g$.
\end{definition}

Observe that $d^\om\ph$ evidently is tensorial and since the second
summand in the definition is alternating by construction, it is indeed
an $(k+1)$--form. Let us prove some basic properties of the operation
$d^\om$:

\begin{prop}\label{prop4.1} Let $(p:\Cal G\to M,\om)$ be a regular
  Cartan geometry of type $(\frak g,P)$. Then we have: 

(1) For $\ph\in\Om^k_{\hor}(\Cal G,\frak g)_P$, also $d^\om\ph$ is
  horizontal and $P$--equivariant. In addition, if $\ph$ is
  homogeneous of degree $\geq\ell$, then so is $d^\om\ph$.

  (2) The curvature $K$ of $\om$ satisfies the Bianchi--identity
  $d^\om K=0$.
\end{prop}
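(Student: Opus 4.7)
For part (1), I would verify the three required properties separately. Equivariance follows because $(r^g)^*$ commutes with $d$, while $\Ad(g^{-1})$ is a constant linear Lie algebra automorphism, so each bracket term $[\om(\xi_i),\ph(\dots)]$ in \eqref{covext} is equivariant once $\om$ and $\ph$ are. For horizontality, I would substitute $\xi_0 = \ze_X$ with $X \in \frak p$: horizontality of $\ph$ annihilates every term in \eqref{covext} indexed by $i \geq 1$, leaving $d\ph(\ze_X,\xi_1,\dots,\xi_k) + [X,\ph(\xi_1,\dots,\xi_k)]$. Cartan's magic formula together with $i_{\ze_X}\ph = 0$ rewrites the first summand as $(\Cal L_{\ze_X}\ph)(\xi_1,\dots,\xi_k)$, and differentiating the equivariance relation $(r^{\exp(tX)})^*\ph = \Ad(\exp(-tX))\o\ph$ at $t=0$ identifies this Lie derivative with $-\ad(X)\o\ph$, producing the desired cancellation.

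For homogeneity, given $\xi_j \in T^{i_j}_u\Cal G$ with $i_j < 0$ (the only relevant range for horizontal $\ph$), I would extend each $\xi_j$ to the $\om$-constant vector field $\tilde\xi_j := \om^{-1}(X_j)$ for some $X_j \in \frak g^{i_j}$. Using the identity $K(\tilde\xi_i,\tilde\xi_j) = d\om(\tilde\xi_i,\tilde\xi_j) + [X_i,X_j]$ together with $d\om(\tilde\xi_i,\tilde\xi_j) = -\om([\tilde\xi_i,\tilde\xi_j])$ (since $\om(\tilde\xi_i) = X_i$ is constant), I would obtain $\om([\tilde\xi_i,\tilde\xi_j]) = [X_i,X_j] - K(\tilde\xi_i,\tilde\xi_j) \in \frak g^{i_i+i_j}$ by regularity of $\om$ and the filtered Lie algebra property. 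Writing $I := i_0+\dots+i_k$, the standard formula for the exterior derivative now bounds the terms $\ph([\tilde\xi_i,\tilde\xi_j],\dots)$ by $\frak g^{I+\ell}$; the derivative terms $\tilde\xi_i\cdot\ph(\dots)$ take values in $\frak g^{I-i_i+\ell} \subset \frak g^{I+\ell}$ since $i_i \leq -1$; and the explicit bracket terms $[X_i,\ph(\dots)]$ from \eqref{covext} land in $[\frak g^{i_i},\frak g^{I-i_i+\ell}] \subset \frak g^{I+\ell}$. Summing gives the claim.

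For part (2), I would argue directly from the structure equation $K = d\om + \frac{1}{2}[\om,\om]$. Using $d^2 = 0$ and the graded Leibniz rule gives $dK = [d\om,\om]$; substituting $d\om = K - \frac{1}{2}[\om,\om]$ yields $dK = [K,\om] - \frac{1}{2}[[\om,\om],\om]$, and the triple bracket vanishes because its evaluation on three tangent vectors is the total alternation of a Jacobiator in $\frak g$. Identifying the sum $\sum_i (-1)^i [\om(\xi_i),K(\dots)]$ appearing in \eqref{covext} with the form bracket $[\om,K]$ and using graded commutativity $[\om,K] = -[K,\om]$ yields $d^\om K = dK + [\om,K] = 0$. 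I expect the homogeneity part of (1) to be the main obstacle, since it is the only step which requires both the regularity hypothesis and a delicate filtration-aware accounting of each summand in the expansion of $d^\om\ph$; the remaining assertions reduce to formal manipulations with $d$, $\Ad$, and the Jacobi identity.
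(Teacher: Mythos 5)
Your proof is correct. Part (1) follows essentially the same route as the paper: equivariance via naturality of $d$ and the fact that $\Ad(g^{-1})$ is a Lie algebra automorphism, and horizontality via the infinitesimal equivariance relation $\Cal L_{\ze_X}\ph=-\ad(X)\o\ph$ combined with Cartan's formula — this is exactly the paper's argument. For homogeneity the paper works with arbitrary local sections of the subbundles $T^{i_j}\Cal G$ and invokes the bracket computation \eqref{bracket} from the proof of Theorem \ref{thm2.4} to see that $[\xi_j,\xi_r]\in\Ga(T^{i_j+i_r}\Cal G)$; your choice of the $\om$--constant extensions $\om^{-1}(X_j)$ is a small but pleasant refinement, since it kills the $d\om$ contribution outright and makes the estimate $\om([\tilde\xi_i,\tilde\xi_j])=[X_i,X_j]-K(\tilde\xi_i,\tilde\xi_j)\in\frak g^{i_i+i_j}$ self-contained, at the cost of no generality (every $\xi_j\in T^{i_j}_u\Cal G$ is the value of such a field, and vertical entries are handled by the already-established horizontality). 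Part (2) is where you genuinely diverge: the paper expands $d^\om K$ on three vector fields and cancels terms by hand through cyclic-sum manipulations until only the Jacobiator $\sum_{cycl}[\om(\xi),[\om(\eta),\om(\ze)]]$ survives, whereas you package the same cancellations into the graded calculus of $\frak g$--valued forms via the structure equation $K=d\om+\tfrac12[\om,\om]$, the graded Leibniz rule, the graded Jacobi identity $[[\om,\om],\om]=0$, and the identification of the correction term in \eqref{covext} with $[\om,K]=-[K,\om]$. Your version is shorter and more conceptual; its only cost is that the identity $\sum_i(-1)^i[\om(\xi_i),K(\dots,\widehat{\xi_i},\dots)]=[\om,K](\xi_0,\xi_1,\xi_2)$ depends on the normalization convention for the wedge--bracket of Lie-algebra-valued forms, which you should fix explicitly (with the usual $\tfrac{1}{p!q!}$ convention it reduces to the cyclic sum, and the signs do come out right). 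The paper's elementary computation avoids importing that formalism, which fits its stated aim of keeping everything at the level of explicit $\frak g$--valued forms.
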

\begin{proof}
(1) Equivariancy of $\ph$ reads as $(r^g)^*\ph=\Ad(g^{-1})\o\ph$, and
  as we have noted above, $\om$ is $P$--equivariant, too. Naturality
  of the exterior derivative then shows that
  $(r^g)^*d\ph=d(r^g)^*\ph=d(\Ad(g^{-1})\o\ph)=\Ad(g^{-1})\o
  d\ph$. (In the last step, we have used that we can differentiate
  through the fixed linear map $\Ad(g^{-1})$.) Applying the pullback
  along $r^g$ to the other part in the definition of $d^\om\ph$, we
  get 
$$
\textstyle\sum_i(-1)^i[(r^g)^*\om(\xi_i),(r^g)^*\ph
(\xi_0,\dots,\widehat{\xi_i},\dots,\xi_k)] 
$$
By equivariancy of $\om$ and $\ph$ we can replace $(r^g)^*$ in both
terms by acting with $\Ad(g^{-1})$ on the values. Since $\Ad(g^{-1})$
is a Lie algebra homomorphism, we can move the $\Ad(g^{-1})$ out of
the bracket. Together with the above, we see that equivariancy of
$\ph$ implies equivariancy of $d^\om\ph$. 

Next, we can apply equivariancy of $\ph$ in the case that $g=\exp(tA)$
for some $A\in\frak p$. Then $r^{\exp(tA)}$ is the flow up to time $t$
of the fundamental vector field $\ze_A$ generated by $A$, while
$\Ad(\exp(tA)^{-1})=e^{-t\ad(A)}$ where we use the matrix exponential
in $L(\frak g,\frak g)$. Differentiating at $t=0$, we we obtain $\Cal
L_{\ze_A}\ph=-\ad(A)\o\ph$, where $\Cal L_{\ze_A}$ denotes the Lie
derivative along the fundamental vector field $\ze_A$. Using the
Cartan formula for the Lie derivative and assuming that $\ph$ is
horizontal, we get that $i_{\ze_A}d\ph=-\ad(A)\o\ph$. Since
$\om(\ze_A)=A$, this together with horizontality of $\ph$ implies that
$d^\om\ph(\ze_A,\xi_1,\dots,\xi_k)=0$ for arbitrary vector fields
$\xi_1,\dots,\xi_k$, which exactly says that $d^\om\ph$ is horizontal.

So let us finally assume that $\ph\in\Om^k_{hor}(\Cal G,\frak
g)^\ell_P$. Since we already know that $d^\om\ph$ is horizontal, it
suffices to check homogeneity on sections $\xi_j\in\Ga(T^{i_j}\Cal G)$
with all $i_j<0$. (This uses that any tangent vector in a subbundle
can be extended to a smooth section of the subbundle.) Now we put
$s:=i_0+\dots+i_k+\ell$ and use the global formula for $d\ph$. First,
this gives terms of the form
$\xi_j\cdot\ph(\xi_0,\dots,\widehat{\xi_j},\dots,\xi_k)$. Homogeneity
of $\ph$ shows that the function which is differentiated has values in
$\frak g^{s-i_j}$ and since $i_j<0$, we see that $s-i_j>s$. Thus, also
the derivative along $\xi_j$ has values in $\frak
g^{s-i_j}\subset\frak g^s$.

On the other hand, there are terms in which a bracket of two of the
$\xi_j$ and the remaining vector fields are inserted into $\ph$. In
the proof of Theorem \ref{thm2.4}, we have seen that regularity of
$\om$ implies that $[\xi_j,\xi_r]\in\Ga(T^{i_j+i_r}\Cal G)$ which
readily shows that all these terms have values in $\frak
g^s$. Finally, a term
$[\om(\xi_j),\ph(\xi_0,\dots,\widehat{\xi_j},\dots,\xi_k)]$ clearly
produces values in $[\frak g^{i_j},\frak g^{s-i_j}]\subset\frak g^s$,
and this completes the proof of homogeneity of $d^\om\ph$.

(2) We can compute the value of $d^\om K$ on $\xi,\eta,\ze\in\frak
X(\Cal G)$ as
\begin{equation}\label{Bianchi-comp}
dK(\xi,\eta,\ze)+\textstyle\sum_{cycl}[\om(\xi),K(\eta,\ze)], 
\end{equation}
where $\sum_{cycl}$ denotes the sum over all cyclic permutations of
the arguments. In the first summand, we use $d^2=0$ to replace $K$ by
the two form $(\eta,\ze)\mapsto [\om(\eta),\om(\zeta)]$. Doing this,
the first term can be rewritten as the sum over all cyclic
permutations of 
$$
\xi\cdot[\om(\eta),\om(\ze)]-[\om([\xi,\eta]),\om(\ze)]. 
$$
Using bilinearity of the bracket and applying appropriate cyclic
permutations we can replace this by the sum over all cyclic
permutations of 
$$
[\eta\cdot\om(\ze),\om(\xi)]+[\om(\xi),\ze\cdot\om(\eta)]-
[\om([\eta,\ze]),\om(\xi)]. 
$$
Using skew symmetry in the second summand and inserting the definition
of the exterior derivative, we see that the whole
expression coincides with the cyclic sum over
$[d\om(\eta,\ze),\om(\xi)]$. Using skew symmetry once again and
expanding the definition of $K(\eta,\ze)$ in the second summand of
\eqref{Bianchi-comp} we see that this cancels with the part coming
from $d\om(\eta,\ze)$. Hence we are left with
$\sum_{cycl}[\om(\xi),[\om(\eta),\om(\ze)]]$, which vanishes by the
Jacobi identity for the bracket in $\frak g$. 
\end{proof}

\subsection{The affine structure on the space of Cartan
  connections}\label{4.2} 

Apart from the condition that the values are linear isomorphisms on
each tangent space, Cartan connections form an affine space. Analyzing
the behavior of curvature under affine changes is a major ingredient
in the results of existence and uniqueness on normal Cartan
connections.

\begin{prop}\label{prop4.2}
Let $(\frak g,P)$ be an admissible pair and let $(p:\Cal G\to M,\om)$
be a regular Cartan geometry of type $(\frak g,P)$.

(1) If $\hat\om$ is another Cartan connection on $\Cal G$, then
$\ph:=\hat\om-\om$ lies in $\Om^1_{\hor}(\Cal G,\frak g)_P$. Moreover,
the Cartan connection $\hat\om$ induces the same filtration on $TM$ as
$\om$ iff $\ph\in\Om^1_{\hor}(\Cal G,\frak g)^0_P$ and it induces the
same underlying filtered $G_0$--structure iff $\ph\in\Om^1_{\hor}(\Cal
G,\frak g)^1_P$.

(2) Conversely, for $\ph\in\Om^1_{\hor}(\Cal G,\frak g)_P$, the form
$\hat\om=\om+\ph$ is a Cartan connection on $\Cal G$ iff
$\hat\om(u):T_u\Cal G\to \frak g$ is a linear isomorphism for each
$u\in\Cal G$. This condition is automatically satisfied for
$\ph\in\Om^1_{\hor}(\Cal G,\frak g)^1_P$.

(3) Fix $\ell\geq 1$ and $\ph\in\Om^1_{\hor}(\Cal G,\frak g)^\ell_P$,
put $\hat\om=\om+\ph$ and let $K$ and $\hat K$ be the curvatures of
$\om$ and $\hat\om$, respectively. Then $\hat K-K\in\Om^2_{\hor}(\Cal
G,\frak g)_P^\ell$, so in particular $\hat\om$ is regular, and $\hat
K-K-d^\om\ph\in\Om^2_{\hor}(\Cal G,\frak g)^{\ell+1}_P$. 
\end{prop}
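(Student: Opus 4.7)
\textbf{Proof plan for Proposition \ref{prop4.2}.}

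For part (1), the plan is to observe that $\ph:=\hat\om-\om$ is automatically in $\Om^1(\Cal G,\frak g)_P$ because equivariance is a linear condition satisfied by both $\om$ and $\hat\om$, and it is horizontal because both Cartan connections reproduce fundamental vector fields, i.e.\ $\om(\ze_X)=X=\hat\om(\ze_X)$ for $X\in\frak g^0$, so $\ph$ vanishes on the vertical subbundle $T^0\Cal G$. For the filtration characterizations, I would unpack the construction in the proof of Theorem \ref{thm2.4}: for $i<0$, one has $T^iM=Tp(\om^{-1}(\frak g^i))$, and since $\frak g^0\subset\frak g^i$, whether a tangent vector at $u$ lies in the pullback of $T^iM$ depends only on whether $\om_u$ sends it to $\frak g^i$. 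Hence $\hat\om$ yields the same filtration iff $\om^{-1}(\frak g^i)=\hat\om^{-1}(\frak g^i)$ for each $i<0$, and combining this with horizontality this boils down to $\ph(T^i\Cal G)\subset\frak g^i$, i.e.\ $\ph$ being homogeneous of degree $\geq 0$. For the underlying filtered $G_0$--structure, the $G_0$--frame $\underline{\ph}_u$ attached to $u$ is obtained from $\om_u$ by passing to associated graded quotients, so $\hat\om$ and $\om$ induce the same reduction iff $\ph$ induces the zero map on $\gr_i$, i.e.\ $\ph(T^i\Cal G)\subset\frak g^{i+1}$, which is exactly homogeneity $\geq 1$.

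For part (2), equivariance and the fundamental vector field condition for $\hat\om=\om+\ph$ are immediate, so only the pointwise isomorphism property needs proof. If $\ph$ has degree $\geq 1$, the computation from part (1) shows that $\hat\om$ preserves the filtration $T^i\Cal G$ and induces on each quotient $T^i_u\Cal G/T^{i+1}_u\Cal G\to\frak g^i/\frak g^{i+1}$ the same map as $\om$. Since $\om_u$ is a filtered linear isomorphism whose associated graded is invertible, and $\hat\om_u$ is filtered with the same associated graded, a standard dimension/filtration argument forces $\hat\om_u$ to be an isomorphism.

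For part (3), I would simply expand the curvature formula: writing $\hat\om=\om+\ph$ and using bilinearity of the bracket together with $d\hat\om=d\om+d\ph$, a direct calculation gives
\begin{equation*}
\hat K(\xi,\eta)-K(\xi,\eta)=d^\om\ph(\xi,\eta)+[\ph(\xi),\ph(\eta)],
\end{equation*}
where I have recognized the covariant exterior derivative from Definition \ref{def-covext}. By Proposition \ref{prop4.1}(1), the term $d^\om\ph$ lies in $\Om^2_{\hor}(\Cal G,\frak g)_P^\ell$. For the quadratic term, if $\xi\in T^i\Cal G$ and $\eta\in T^j\Cal G$ with $i,j<0$, then $\ph(\xi)\in\frak g^{i+\ell}$ and $\ph(\eta)\in\frak g^{j+\ell}$, hence the bracket lies in $\frak g^{i+j+2\ell}\subset\frak g^{i+j+\ell+1}$ since $\ell\geq 1$. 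This shows simultaneously that $\hat K-K\in\Om^2_{\hor}(\Cal G,\frak g)^\ell_P$ (whence $\hat K$ has positive homogeneity, i.e.\ $\hat\om$ is regular) and that $\hat K-K-d^\om\ph$ has homogeneity $\geq\ell+1$.

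The main bookkeeping obstacle is getting part (1) right: one must carefully translate between the three filtrations (on $T\Cal G$, on $TM$, and on $\gr(TM)$) and verify that the definition of the underlying $G_0$--reduction in the proof of Theorem \ref{thm2.4} depends only on the class of $\om_u$ modulo forms of positive homogeneity; parts (2) and (3) are then essentially direct verifications.
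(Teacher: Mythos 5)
Your proposal is correct and follows essentially the same route as the paper: equivariance and horizontality of $\ph$ from the defining properties of Cartan connections, the filtration and $G_0$--structure characterizations read off from the construction in Theorem \ref{thm2.4}, and the curvature expansion $\hat K-K=d^\om\ph+[\ph(\cdot),\ph(\cdot)]$ with Proposition \ref{prop4.1} handling the $d^\om\ph$ term and the homogeneity estimate $2\ell\geq\ell+1$ handling the quadratic term. The only cosmetic difference is in part (2), where the paper proves injectivity of $\hat\om_u$ by the explicit iteration $\xi\in T^{-\mu+1}_u\Cal G\Rightarrow\xi\in T^{-\mu+2}_u\Cal G\Rightarrow\dots$, which is just the unpacked form of the ``filtered map with invertible associated graded is invertible'' argument you invoke.
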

\begin{proof}
(1) By definition, both $\om$ and $\hat\om$ are $P$--equivariant, so
  $\ph$ is equivariant. Since they both reproduce the generators of
  fundamental vector fields, it readily follows that $\ph$ is
  horizontal. The condition that $\hat\om$ induces the same filtration
  of $TM$ of $\om$ clearly means that for $\xi\in T_u\Cal G$ we get
  $\hat\om(\xi)\in\frak g^i$ if and only if  $\om(\xi)\in\frak
  g^i$. But this is evidently equivalent to $\ph(\xi)\in\frak g^i$ for
  all $i<0$ and all $\xi\in T^i_u\Cal G$ and hence to $\ph$ being
  homogeneous of degree $\geq 0$. 

  Assuming that this is the case, consider the construction of the
  underlying filtered $G_0$--structure from Theorem \ref{thm2.4}. The
  homomorphism from $\Cal G$ to the frame bundle of $\gr(TM)$ inducing
  this structure comes from the map on the associated graded induced
  by $\om(u)$, viewed as an isomorphism $T^i_u\Cal G\to\frak g^i$ for
  all $i<0$. So the condition that $\hat\om$ induces the same
  underlying structure means that $\hat\om(u)$ induces the same map on
  the associated graded. But this is equivalent to $\ph(\xi)\in\frak
  g^{i+1}$ for all $\xi\in T^i_u\Cal G$, which completes the proof of
  (1).

  (2) It follows readily from the definition that $\hat\om$ is
  $P$--equivariant and reproduces the generators of fundamental vector
  fields, so the first statement is clear. To prove the second
  statement, assume that $\ph$ is homogeneous of degree $\geq 1$ and
  the $u\in\Cal G$ and $\xi\in T_u\Cal G$ are such that
  $\hat\om(u)(\xi)=0$. Then by definition
  $\om(u)(\xi)=-\ph(u)(\xi)$. But by homogeneity of $\ph$, the right
  hand side lies in $\frak g^{-\mu+1}$, so the left hand side shows
  that $\xi\in T^{-\mu+1}_u\Cal G$. But this implies that the right
  hand side lies in $\frak g^{-\mu+2}$ and thus $\xi\in
  T^{-\mu+2}_u\Cal G$. This can be iterated until we get $\xi\in
  T^0_u\Cal G$. But then $\ph(u)(\xi)=0$ and $\om(u)(\xi)=0$ implies
  $\xi=0$.

(3) Inserting into the definition of curvature, we see that $\hat K$
maps $\xi,\eta\in\frak X(\Cal G)$ to
\begin{equation}\label{curv-trans}
\begin{aligned}
  d\om(\xi,\eta)+d\ph(\xi,\eta)+&[\om(\xi)+\ph(\xi),\om(\eta)+\ph(\eta)]\\
  &=K(\xi,\eta)+d^\om\ph(\xi,\eta)+[\ph(\xi),\ph(\eta)].
\end{aligned}
\end{equation}
From Proposition \ref{prop4.1}, we know that
$d^\om\ph\in\Om^2_{\hor}(\Cal G,\frak g)^\ell_P$. Moreover, for
$\xi\in T^i\Cal G$ and $\eta\in T^j\Cal G$, the last term has values
in $\frak g^{i+j+2\ell}$, so this is homogeneous of degree $\geq
2\ell\geq\ell+1$.
\end{proof}

\subsection{Normal Cartan connections and essential
  curvature}\label{4.3} The key towards the concept of normality and
to normalization is the following description of $\frak g$--valued
differential forms.

\begin{thm}\label{thm4.3}
Let $(\frak g,P)$ be an admissible pair and let  $(p:\Cal G\to M,\om)$
be a regular Cartan geometry of type $(\frak g,P)$. 

(1) For each $k\geq 0$, there is a natural isomorphism between the
spaces $\Om^k(\Cal G,\frak g)$ and the space $C^\infty(\Cal
G,L(\La^k\frak g,\frak g))$. Under this isomorphism, a form is
horizontal iff the corresponding function has values in $L(\La^k(\frak
g/\frak p),\frak g)$ and it in addition is homogeneous of degree
$\geq\ell$ iff the values are in $L(\La^k(\frak g/\frak p),\frak
g)^\ell$. Finally, $P$--equivariancy of a form is equivalent to
equivariancy of the corresponding function $f$ in the sense that for
each $g\in P$, we get $f(u\cdot g)=g^{-1}\cdot f(u)$. Here the
principal right action is used in the right hand side, while in the
left hand side we use the natural action of $P$.

(2) Suppose that $\ph\in\Om^k_{\hor}(\Cal G,\frak g)^\ell_P$
corresponds to $f:\Cal G\to L(\La^k(\frak g/\frak p),\frak
g)^\ell$ under the isomorphism from (1). The the function $\tilde f$
associated to $d^\om\ph\in\Om^{k+1}_{\hor}(\Cal G,\frak g)^\ell_P$ has
the property that 
$$
\gr_\ell\o \tilde f=\partial\o\gr_\ell\o f:\Cal G\to C^{k+1}(\frak
m,\gr(\frak g)). 
$$
\end{thm}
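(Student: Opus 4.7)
The plan is to use the Cartan connection $\om$ as a global absolute parallelism on $\Cal G$. Since for each $u \in \Cal G$ the map $\om_u : T_u\Cal G \to \frak g$ is a linear isomorphism, any $k$--form $\ph \in \Om^k(\Cal G, \frak g)$ is uniquely determined by the smooth function $f : \Cal G \to L(\La^k \frak g, \frak g)$ defined by $f(u)(X_1, \dots, X_k) := \ph_u(\om_u^{-1}(X_1), \dots, \om_u^{-1}(X_k))$, and conversely any such function produces a $k$--form. Since vertical vectors are exactly those sent to $\frak p = \frak g^0$ by $\om$, horizontality of $\ph$ is equivalent to $f(u)$ vanishing whenever any entry lies in $\frak p$, i.e.\ to $f$ factoring through $L(\La^k(\frak g/\frak p), \frak g)$. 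Since by construction $\om_u$ restricts to an isomorphism $T^i_u\Cal G \to \frak g^i$ for every $i$, homogeneity of degree $\geq\ell$ of $\ph$ translates directly into $f$ taking values in $L(\La^k(\frak g/\frak p), \frak g)^\ell$. Finally, equivariancy $(r^g)^*\om = \Ad(g^{-1}) \o \om$ combined with equivariancy of $\ph$ gives $f(u \cdot g) = g^{-1} \cdot f(u)$, where the $P$--action on $L(\La^k(\frak g/\frak p), \frak g)$ is the natural tensorial action induced by $\Ad$.

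For part (2), the key idea is to evaluate $d^\om\ph$ on the ``constant'' frame provided by $\om$. For $X \in \frak g$, write $\tilde X \in \frak X(\Cal G)$ for the unique vector field with $\om(\tilde X) \equiv X$, so by construction $\tilde f(u)(X_0, \dots, X_k) = (d^\om\ph)(\tilde X_0, \dots, \tilde X_k)(u)$. Inserting the global formula for $d\ph$ into \eqref{covext} produces three types of terms: (a) derivatives $\tilde X_i \cdot f(\cdot)(X_0, \dots, \widehat{X_i}, \dots, X_k)$; (b) evaluations $\ph([\tilde X_i, \tilde X_j], \tilde X_0, \dots)$ on Lie brackets of the frame fields; and (c) the bracket terms $(-1)^i[X_i, f(u)(X_0, \dots, \widehat{X_i}, \dots, X_k)]$ coming from the definition of $d^\om$. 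Fix $X_j \in \frak g^{i_j}$ with all $i_j < 0$ and set $s := i_0 + \dots + i_k + \ell$. I would show that modulo $\frak g^{s+1}$ only the terms (b) and (c) survive, and that their classes reproduce precisely the two summands in the definition \eqref{partial-def} of $\partial(\gr_\ell \o f)(u)$ applied to $\gr(X_0), \dots, \gr(X_k)$.

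Terms of type (a) are harmless: by homogeneity of $f$, the function being differentiated already has values in $\frak g^{s - i_i}$, and $i_i < 0$ forces $s - i_i \geq s + 1$. Terms of type (c) fall directly into the summand $\sum_i (-1)^i [\gr(X_i), \gr_\ell(f(u))(\gr(X_0), \dots, \widehat{\gr(X_i)}, \dots)]$ after passing to the associated graded of $\frak g^s$. The heart of the argument, and the one place where regularity of $\om$ enters essentially, is type (b). Since $\om(\tilde X_j) \equiv X_j$ is constant, the same computation as in \eqref{bracket} yields $\om([\tilde X_i, \tilde X_j]) = [X_i, X_j] - K(\tilde X_i, \tilde X_j)$, and regularity gives $K_u(\tilde X_i, \tilde X_j) \in \frak g^{i_i + i_j + 1}$. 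Hence $[\tilde X_i, \tilde X_j](u) \equiv \widetilde{[X_i, X_j]}(u) \pmod{T^{i_i + i_j + 1}_u\Cal G}$. Combining with homogeneity of $\ph$ of degree $\geq \ell$ one obtains $\ph([\tilde X_i, \tilde X_j], \tilde X_0, \dots)(u) \equiv f(u)([X_i, X_j], X_0, \dots) \pmod{\frak g^{s+1}}$, which by \eqref{grell-def} is exactly $\gr_\ell(f(u))([\gr(X_i), \gr(X_j)], \gr(X_0), \dots)$.

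The main obstacle is thus the bracket identity for $[\tilde X_i, \tilde X_j]$: this is where the regularity hypothesis on $\om$ is essential, as otherwise a curvature error of insufficient homogeneity would spoil the identification with $\partial \o \gr_\ell \o f$. Everything else reduces to careful bookkeeping of filtration degrees.
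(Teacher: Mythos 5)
Your proposal is correct and follows essentially the same route as the paper's proof: the same construction $f(u)=\ph(u)\circ(\om_u^{-1})^k$ for part (1), and for part (2) the same decomposition of $d^\om\ph$ evaluated on the $\om$--constant frame into derivative terms (killed by homogeneity since $s-i_j>s$), algebraic bracket terms (giving the first sum in \eqref{partial-def}), and Lie--bracket terms of the frame fields, where regularity via \eqref{bracket} is used exactly as you indicate to replace $[\tilde X_i,\tilde X_j]$ by $\widetilde{[X_i,X_j]}$ modulo $T^{i_i+i_j+1}\Cal G$. No substantive differences to report.
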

\begin{proof}
  (1) The relation between a form $\ph$ and the corresponding function
  $f$ is given by
\begin{equation}\label{relation}
f(u)(A_1,\dots,A_k):=\ph(u)(\om_u^{-1}(A_1),\dots,\om_u^{-1}(A_k))\in\frak
g 
\end{equation}
for $u\in\Cal G$ and $A_1,\dots,A_k\in\frak g$. For any $A\in\frak g$,
$u\mapsto\om_u^{-1}(A)$ is a smooth vector field on $\Cal G$. Hence given
$\ph\in\Om^k(\Cal G,\frak g)$, the function $f$ defined by
\eqref{relation} has the property that for any choice of elements
$A_i\in\frak g$, the map $u\mapsto f(u)(A_1,\dots,A_k)$ is smooth. But
this means that $f:\Cal G\to L(\La^k\frak g,\frak g)$ is a smooth
map. Conversely, given a smooth function $f$, we define $\ph:\frak
X(\Cal G)^k\to\frak g$ by 
$$
\ph(\xi_1,\dots,\xi_k):=f(\om(\xi_1),\dots,\om(\xi_k))
$$   
for $\xi_i\in\frak X(\Cal G)$. This is obviously alternating and
$k$--linear over $C^\infty(\Cal G,\Bbb R)$ and hence defines an
element of $\Om^k(\Cal G,\frak g)$. Since the two constructions are
evidently inverse to each other we have established the claimed
bijection.

Now a vector field on $\Cal G$ is vertical in $u\in\Cal G$ if and only
if it is mapped by $\om$ to $\frak p\subset\frak g$ in that
point. This shows that a form $\ph$ is horizontal if and only if the
values of the corresponding function $f$ vanish upon insertion of a
single element of $\frak p$. This exactly means that the values lie in
the subspace $L(\La^k(\frak g/\frak p),\frak g)$, which proves the
claimed characterization of horizontality. The interpretation of
homogeneity $\geq\ell$ then follows readily from the definitions.

Equivariancy of $\om$ reads as $\om_{u\cdot
  g}(T_ur^g\cdot\xi)=\Ad(g^{-1})(\om_u(\xi))$ for each $u\in\Cal G$,
$\xi\in T_u\Cal G$ and $g\in P$. This shows that $\om_{u\cdot
  g}^{-1}(A)=T_ur^g\cdot \om_u^{-1}(\Ad(g)(A))$. Inserting this in
\eqref{relation} we conclude that 
$$
f(u\cdot g)(A_1,\dots,A_k)=\ph(u\cdot
g)(T_ur^g\cdot\om_u^{-1}(\Ad(g)(A_1)),\dots,T_ur^g\cdot\om_u^{-1}(\Ad(g)(A_k))), 
$$ 
and the right hand side equals
$(r^g)^*\ph(\om_u^{-1}(\Ad(g)(A_1)),\dots,\om_u^{-1}(\Ad(g)(A_k)))$. Thus
we see that equivariancy of $\ph$ is equivalent to
$$
f(u\cdot
g)(A_1,\dots,A_k)=\Ad(g^{-1})(f(u)(\Ad(g)(A_1),\dots,\Ad(g)(A_k))), 
$$
which exactly means that $f$ is equivariant in the sense claimed in
the theorem. This completes the proof of (1).

(2) Since $\ph\in\Om^k_{\hor}(\Cal G)^\ell_P$, we see from (1) that
the corresponding function $f$ has values in $L(\La^k(\frak g/\frak
p),\frak g)^\ell$, so we can form $\gr_\ell\o f:\Cal G\to C^k(\frak
m,\gr(g))_\ell$. We have also verified in Proposition \ref{prop4.1}
that $d^\om\ph\in\Om^{k+1}_{\hor}(\Cal G,\frak g)^\ell_P$, and as
above we can take the corresponding function $\tilde f$ and form
$\gr_\ell\o \tilde f:\Cal G\to C^{k+1}(\frak m,\gr(\frak g))_\ell$. To
compute this, let us take elements $A_j\in\frak g^{i_j}$ with $i_j<0$
for $j=0,\dots,k$, put $s:=i_0+\dots+i_k+\ell$, and form
$$
\tilde
f(u)(A_0,\dots,A_k)=(d^\om\ph)(u)(\om^{-1}(A_0),\dots,\om^{-1}(A_k)).
$$
Expanding the right hand side according to the definition of $d^\om$,
we get
\begin{equation}\label{dom1}
(d\ph)(u)(\om^{-1}(A_0),\dots,\om^{-1}(A_k))+\textstyle\sum_{j=0}^k
[A_j,f(u)(A_0,\dots,\widehat{A_j},\dots,A_k)]. 
\end{equation}
Now in the last term, we have $A_j\in\frak g^{i_j}$ whereas the value
of $f$ lies in $\frak g^{s-i_j}$. Hence the bracket lies in $\frak
g^s$ and its projection to the associated graded coincides with the
bracket in $\gr(\frak g)$ of $\gr_{i_j}(A_j)$ and
$\gr_{s-i_j}(f(u)(A_0,\dots,\widehat{A_j},\dots,A_k))$. In view of
\eqref{grell-def} we see that $\gr_\ell$ maps the whole last sum in
\eqref{dom1} to
\begin{equation}\label{dom3}
\textstyle\sum_{j=0}^k(-1)^j[\gr_{i_j}(A_j),\gr_{\ell}(f(u))(\gr_{i_0}(A_0)\dots
  ,\widehat{\gr_{i_j}(A_j)},\dots ,\gr_{i_k}(A_k))],  
\end{equation}
with the bracket being taken in $\gr(\frak g)$. 

For the first term in \eqref{dom1}, we have partly analyzed the
exterior derivative in the proof of Proposition \ref{prop4.1}
already. In particular, we have seen there that all the terms in which
values of $\ph$ are differentiated in direction of one of the vector
fields take values in $\frak g^{s+i_j}$ for some $j$ and thus vanish
under projection to the associated graded. Hence we have to compute
the projection of the associated graded of
\begin{equation}\label{dom2}
\textstyle\sum_{j<r}(-1)^{j+r}\ph(u)([\om^{-1}(A_j),\om^{-1}(A_r)],\om^{-1}(A_0),\dots,\widehat{j} ,\dots,\widehat{r},\dots,\om^{-1}(A_k)). 
\end{equation}
In the proof of Theorem \ref{thm2.4} (see in particular formula
\eqref{bracket}), we have seen that regularity of
$\om$ implies that $\om([\om^{-1}(A_j),\om^{-1}(A_r)])$ is congruent
to $[A_j,A_r]$ modulo $\frak g^{i_j+i_r+1}$. Hence up to terms in
$\frak g^{s+1}$, we can compute \eqref{dom2} as 
$$
\textstyle\sum_{j<r}(-1)^{j+r}f(u)([A_j,A_r],A_0,\dots,\widehat{A_j},\dots,\widehat{A_r},\dots,A_k). 
$$ 
Now the degree of the elements inserted into $f$ here add up to
$s-\ell$, so using \eqref{grell-def} once more, we see that the
projection of this into $\gr_s(\frak g)$ is given by 
$$
\textstyle\sum_{j<r}(-1)^{j+r}\gr_\ell(f(u))(\gr_{i_j+i_r}([A_j,A_r]),
\gr_{i_0}(A_0),\dots,\hat j,\dots,\hat r,\dots,\gr_{i_k}(A_k)).  
$$
Now observing that $\gr_{i_j+i_r}([A_j,A_r])$ coincides with the
bracket of $\gr_{i_j}(A_j)$ and $\gr_{i_r}(A_r)$ in $\gr(\frak g)$, we
conclude that this adds up with the contribution from \eqref{dom3} to
$\partial\gr_\ell(f(u))(\gr_{i_0}(A_0),\dots,\gr_{i_k}(A_k))$,
which completes the proof. 
\end{proof}

Having this result at hand, the definition of normality becomes rather
straightforward. Suppose that $(\frak g,P)$ is an admissible pair and
that $(p:\Cal G\to M,\om)$ is a regular Cartan geometry of type
$(\frak g,P)$ with curvature $K\in\Om^2(\Cal G,\frak g)$. Then from
\ref{4.1} we know that $K\in\Om^2_{\hor}(\Cal G,\frak g)^1_P$, so by
Theorem \ref{thm4.3}, it corresponds to a $P$--equivariant smooth
function $\ka:\Cal G\to L(\La^2(\frak g/\frak p),\frak g)^1$. 

\begin{definition}\label{def4.3}
Let $(\frak g,P)$ be an admissible pair and let $\Cal N\subset
L(\La^2(\frak g/\frak p),\frak g)$ be a normalization condition for
$(\frak g,P)$. Let $(p:\Cal G\to M,\om)$ be a regular Cartan geometry
of type $(\frak g,P)$. 

(1) The function $\ka:\Cal G\to L(\La^2(\frak g/\frak p),\frak g)^1$
corresponding to the curvature $K$ of $\om$ is called the
\textit{curvature function} of the geometry. 

(2) The geometry $(p:\Cal G\to M,\om)$ is called \textit{normal} (of
type $\Cal N$) if and only if its curvature function has values in the
subspace $\Cal N\subset L(\La^2(\frak g/\frak p),\frak g)^1$. 

(3) Suppose that the geometry $(p:\Cal G\to M,\om)$ is normal and that
$\tcn\subset\Cal N$ is a negligible submodule. Then the
\textit{essential curvature function} (with respect to
$\tcn\subset\Cal N$) of the geometry is the function $\ka_e:\Cal G\to
\Cal N/\tcn$ induced by $\ka$.
\end{definition}

This definition has several immediate consequences. First, normality
can be checked locally, since it only depends on the curvature, which
is a local quantity. Second, $P$--invariance of the subspace $\Cal N$
shows that if for some $u\in\Cal G$ we have $\ka(u)\in\Cal N$, then
this holds in all points which lie in the same fiber as $u$, since
$\ka(u\cdot g)=g^{-1}\cdot\ka(u)$. This will be crucial for
normalizing Cartan connections. Similarly, the essential curvature
function is a local invariant of a normal geometry. We can easily
prove that the essential curvature still is a complete obstruction
against local flatness.

\begin{prop}\label{prop4.3}
  Let $(\frak g,P)$ be an admissible pair, $\Cal N$ a normalization
  condition for $(\frak g,P)$ and $\tcn\subset\Cal N$ a negligible
  submodule. Then for a regular normal Cartan geometry $(p:\Cal G\to
  M,\om)$, the essential curvature function vanishes identically if and
  only if the curvature $K$ of $\om$ vanishes identically.
\end{prop}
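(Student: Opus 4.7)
One direction is immediate: if $K$ vanishes identically, then $\kappa\equiv 0$ and hence $\kappa_e\equiv 0$. The interesting direction is the converse, and I would establish it by an induction on the degree of homogeneity, the engine being the Bianchi identity combined with Theorem \ref{thm4.3}(2) and the defining property of a negligible submodule.

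Assume $\kappa_e\equiv 0$, so that $\kappa$ takes values in $\tcn$. By regularity we already know $\kappa$ takes values in $\tcn\cap L(\La^2(\frak g/\frak p),\frak g)^1=\tcn^1$. I claim inductively that $\kappa$ takes values in $\tcn^\ell$ for every $\ell\geq 1$. Assuming this for some $\ell$, form the associated graded function $\gr_\ell\o\kappa:\Cal G\to \gr_\ell(\tcn)\subset C^2(\frak m,\gr(\frak g))_\ell$. The Bianchi identity from Proposition \ref{prop4.1}(2) gives $d^\om K=0$, so the function associated to $d^\om K$ vanishes; by Theorem \ref{thm4.3}(2) applied to $\ph=K$ at homogeneity $\ell$, we obtain
\[
\partial\o\gr_\ell\o\kappa\;=\;0.
\]
Thus $\gr_\ell\o\kappa$ takes values in $\gr_\ell(\tcn)\cap\ker(\partial)$, which is $\{0\}$ by the defining property of a negligible submodule (Definition \ref{def-neg}). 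Hence $\gr_\ell\o\kappa\equiv 0$, which is precisely the statement that $\kappa$ takes values in $\tcn^{\ell+1}$, completing the inductive step.

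To close the argument I invoke finite-dimensionality: for $\al\in L(\La^2(\frak g/\frak p),\frak g)^\ell$ and $X\in\frak g^i$, $Y\in\frak g^j$ with $i,j<0$, we have $\al(X+\frak p,Y+\frak p)\in\frak g^{i+j+\ell}$, and this space is zero once $i+j+\ell>\nu$. Since $i,j\geq -\mu$, it follows that $L(\La^2(\frak g/\frak p),\frak g)^\ell=\{0\}$ as soon as $\ell>2\mu+\nu$. Therefore $\tcn^\ell=\{0\}$ for large $\ell$, and the induction forces $\kappa\equiv 0$, i.e.\ $K\equiv 0$.

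The only nontrivial ingredient is the compatibility of $d^\om$ with the Lie algebra cohomology differential $\partial$ at the level of associated graded, but this is exactly what Theorem \ref{thm4.3}(2) supplies; everything else is a bookkeeping argument about filtrations. There is no real obstacle, provided one is careful to phrase the induction in terms of the \emph{filtration} on $\tcn$ rather than the grading, since $\kappa$ itself is a function into the filtered space $L(\La^2(\frak g/\frak p),\frak g)$ and is only controlled through its associated graded via Theorem \ref{thm4.3}.
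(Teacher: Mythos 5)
Your argument is correct and follows essentially the same route as the paper's own proof: an induction on the homogeneity degree of the curvature function, driven by the Bianchi identity, the compatibility of $d^\om$ with $\partial$ from Theorem \ref{thm4.3}(2), and the condition $\gr_\ell(\tcn)\cap\ker(\partial)=\{0\}$, terminating because all maps of sufficiently high homogeneity vanish. The paper presents the same induction slightly more informally (starting at $\ell=1$ and saying ``iteratively''), so there is nothing to add.
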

\begin{proof}
  This is a simple consequence of the Bianchi identity. If the
  essential curvature function vanishes identically, then the
  curvature function $\ka$ of the geometry has values in $\tcn\cap
  L(\La^2(\frak g/\frak p),\frak g)^1$, with homogeneity of degree
  $\geq 1$ following from regularity. In particular, for each
  $u\in\Cal G$, we see that $\gr_1(\ka(u))\in\gr_1(\tcn)$. But by the
  Bianchi identity, we have $0=d^\om K$, which using Theorem
  \ref{thm4.3} shows that $0=\partial(\gr_1(\ka(u)))$ for each
  $u\in\Cal G$. By definition of a negligible submodule,
  $\gr_1(\tcn)\cap\ker(\partial)=\{0\}$ so we see that
  $\gr_1(\ka(u))=0$ for each $u\in\Cal G$.

  Hence we conclude that $\ka$ has values in $\tcn\cap L(\La^2(\frak
  g/\frak p),\frak g)^2$, so we can consider $\gr_2(\ka(u))$ for
  $u\in\Cal G$ and show as above that this vanishes. Iteratively, we
  conclude that $\ka(u)$ is homogeneous of degree $2\mu+\nu+1$ for
  each $u\in\Cal G$, which shows that $\ka$ vanishes identically.
\end{proof}

Of course, this result is most interesting in the case of a maximal
negligible submodule $\tcn\subset\Cal N$. Another question of
particular interest is whether $\tcn\subset\Cal N$ can be chosen in
such a way that $P_+\cdot\Cal N\subset\tcn$. This always happens, for
example, for parabolic geometries with $\Cal N$ and $\tcn$ defined via
the Kostant codifferential. If $P_+\cdot\Cal N\subset\tcn$, then $P_+$
acts trivially on $\Cal N/\tcn$, which thus becomes a representation
of $P/P_+=G_0$. The essential curvature function $\ka_e:\Cal G\to \Cal
N/\tcn$ then descends to $\Cal G/P_+$, which is exactly the principal
bundle $\Cal G_0\to M$ describing the underlying filtered
$G_0$--structure. The equivariant function $\ka_e$ then corresponds to
a section of the associated bundle $\Cal G_0\x_{G_0}(\Cal N/\tcn)$, so
this admits a direct interpretation in terms of the underlying
filtered $G_0$--structure. Hence the essential curvature in such a
situation is a much simpler geometric object than the full Cartan
curvature.

\subsection{Normalizing Cartan connections}\label{4.4}
We are now ready to prove the first main result towards the existence
of canonical normal Cartan connections. Namely, we show that a
filtered $G_0$--structure which is induced by some regular Cartan
connection is also induced by a normal regular Cartan connection. This
only requires the nice algebraic properties of a normalization
condition and no additional assumptions on the admissible pair $(\frak
g,P)$. We start with a technical lemma, which should be of independent
interest.

\begin{lemma}\label{lem4.4}
  Let $(\frak g,P)$ be an admissible pair and let $(p:\Cal G\to
  M,\om)$ be a regular Cartan geometry of type $(\frak g,P)$. 

  (1) Suppose that, for some $k$, we have two $P$--invariant subspaces
  $E_1,E_2\subset L(\La^k(\frak g/\frak p),\frak g)$. Then for a
  smooth, $P$--equivariant function $f:\Cal G\to L(\La^k(\frak g/\frak
  p),\frak g)$, which has values in $E_1+E_2$, there are smooth,
  $P$--equivariant functions $f_j:\Cal G\to E_j$ for $j=1,2$ such that
  $f=f_2+f_2$.

  (2) Suppose that, for some $k$ and $\ell$, we have a smooth,
  $P$--equivariant function $f:\Cal G\to \im(\partial)\cap C^k(\frak
  m,\gr(\frak g))_\ell$. Then there is a smooth, $P$--equivariant
  function $h:\Cal G\to L(\La^{k-1}(\frak g,\frak p),\frak g)^\ell$
  such that $f=\partial\o\gr_\ell\o h$.
\end{lemma}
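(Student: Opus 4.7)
The plan for both parts is to reinterpret them as lifting problems for sections of associated vector bundles over $M$, and then invoke the standard splitting lemma for surjective morphisms of smooth vector bundles over a paracompact base. Recall that smooth $P$--equivariant functions $\Cal G\to V$ for a finite dimensional $P$--module $V$ correspond bijectively to smooth sections of the associated vector bundle $\Cal G\x_P V\to M$, and any $P$--equivariant linear map $V\to W$ induces a vector bundle morphism between the associated bundles. If such a bundle morphism has fiberwise constant rank, both its kernel and image are smooth subbundles and the resulting short exact sequence of vector bundles splits smoothly (for instance, by choosing a fiberwise inner product via a partition of unity).

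For part (1), I would apply this principle to the $P$--equivariant linear surjection
$$
\si:E_1\oplus E_2\to E_1+E_2,\qquad (e_1,e_2)\mapsto e_1+e_2,
$$
whose kernel $\{(e,-e):e\in E_1\cap E_2\}$ has the constant dimension $\dim(E_1\cap E_2)$. Passing to associated bundles yields a surjective vector bundle morphism over $M$ that admits a smooth splitting. The section of $\Cal G\x_P(E_1+E_2)$ corresponding to $f$ lifts to a section of $\Cal G\x_P(E_1\oplus E_2)$, whose two components correspond to the desired $P$--equivariant smooth functions $f_1$ and $f_2$.

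For part (2), I would apply the same principle to the composition
$$
\Ph:=\partial\o\gr_\ell:L(\La^{k-1}(\frak g/\frak p),\frak g)^\ell\to C^k(\frak m,\gr(\frak g))_\ell.
$$
Two things need to be checked. First, $\Ph$ is $P$--equivariant: the filtration on $L(\La^{k-1}(\frak g/\frak p),\frak g)$ is $P$--invariant, since $\Ad:P\to\Aut_f(\frak g)$ by definition of an admissible pair, so $\gr_\ell$ is $P$--equivariant with target carrying the natural $P$--action (factoring through $G_0=P/P_+$). The differential $\partial$ from formula \eqref{partial-def} uses only the bracket on $\frak m$ and the $\frak m$--action on $\gr(\frak g)$, both preserved by the $P$--action on $\gr(\frak g)$ through $\Adgr$ by Proposition \ref{prop2.3}, hence $\partial$ is $P$--equivariant. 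Second, $\Ph$ surjects onto the codomain of $f$: by Lemma \ref{lem3.1} the map $\gr_\ell$ is surjective, and $\partial$ preserves homogeneity, so $\im(\Ph)=\partial(C^{k-1}(\frak m,\gr(\frak g))_\ell)=\im(\partial)\cap C^k(\frak m,\gr(\frak g))_\ell$. Since the source and target are finite dimensional and $\Ph$ has constant rank, the bundle splitting argument delivers a smooth $P$--equivariant lift $h:\Cal G\to L(\La^{k-1}(\frak g/\frak p),\frak g)^\ell$ of $f$, as required.

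The principal technical obstacle is the bookkeeping: verifying $P$--equivariance of the constituent maps and that the relevant kernels have constant dimension, so one truly has a short exact sequence of finite rank vector bundles. Once these points are settled, the classical splitting lemma for surjective morphisms of smooth vector bundles over a paracompact base finishes the argument cleanly, with no need for explicit cocycle computations.
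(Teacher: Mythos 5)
Your proof is correct, and the algebraic core coincides with the paper's: for (1) you use the $P$--equivariant sum map $E_1\oplus E_2\to E_1+E_2$, and for (2) the surjection $\partial\o\gr_\ell$ of $L(\La^{k-1}(\frak g/\frak p),\frak g)^\ell$ onto $\im(\partial)\cap C^k(\frak m,\gr(\frak g))_\ell$ (whose $P$--equivariance you rightly flag and verify; the paper uses it implicitly when it extends its local lifts equivariantly along the fibers). Where you differ is in the smoothing mechanism: the paper works directly on the principal bundle, choosing a non--equivariant linear complement $W\subset E_1$ (resp.\ a linear right inverse $\ps$ of $\partial\o\gr_\ell$), decomposing or lifting $f$ along local sections $\si_i$, extending by equivariance over each $p^{-1}(U_i)$, and gluing with a partition of unity; you instead pass to associated vector bundles and invoke the splitting lemma for surjective constant--rank bundle morphisms, with sections of associated bundles corresponding to equivariant functions. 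These are two packagings of the same phenomenon --- the paper's local construction is essentially the proof of the splitting lemma specialized to this setting --- but your version is shorter and delegates the partition--of--unity work to a standard fact, at the cost of having to check the equivariance of the inducing linear maps up front (which is automatic and correctly handled; note also that the constant--rank hypothesis is free here, since the bundle morphisms are induced by a single fixed equivariant linear map). The paper's version keeps everything explicit on $\Cal G$, which fits the article's stated policy of working with $\frak g$--valued forms on the principal bundle rather than with associated bundles.
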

\begin{proof}
  For both parts, we first solve the problem locally and then glue to
  a global solution using a partition of unity. Hence we choose an
  open covering $\{U_i:i\in I\}$ of $M$ such that $\Cal G$ is trivial
  over each $U_i$. We fix a location section $\si_i$ of $\Cal G$ over
  each $U_i$ and choose a partition of unity subordinate to the
  covering $\{U_i\}$, which we denote by $\{\alpha_i:i\in I\}$. 

  (1) Choose a linear subspace $W\subset E_1$ which is complementary
  to $E_2$ in the finite dimensional vector space $E_1+E_2$. For each
  $i$, $f\o\si_i$ is a smooth function $U_i\to E_1+E_2$, and thus can
  be uniquely written as $h_1+h_2$, where $h_1$ has values in
  $W\subset E_1$, $h_2$ has values in $E_2$, and both summands are
  smooth. Since $\Cal G|_{U_i}\cong U_i\x P$, there is a unique
  $P$--equivariant smooth function $f^i_1:p^{-1}(U_i)\to E_1$ such
  that $h_1=f^i_1\o\si_i$. (One simply puts $f^i_1(\si_i(x)\cdot
  g)=g^{-1}\cdot h_1(x)$, which has values in the $P$--invariant
  subspace $E_1$.) 

  In the same way, we find a $P$--equivariant smooth function
  $f_2^i:p^{-1}(U_i)\to E_2$ such that $h_2=f^i_2\o\si_i$. Hence by
  construction we have $f\o\si_i=(f_1^i+f_2^i)\o\si_i$ which by
  equivariancy implies that $f|_{p^{-1}(U_i)}=f_1^i+f_2^i$. Now for
  each $i$ and $j=1,2$, $(\al_i\o p)f^i_j$ is a smooth,
  $P$--equivariant function on $p^{-1}(U_i)$ which can be extended by
  zero to all of $\Cal G$. Defining $f_j:=\sum_{i\in I}(\al_i\o
  p)f^i_j$, we obtain $P$--equivariant smooth functions $\Cal G\to
  E_j$ for $j=1,2$, which clearly satisfy $f=f_1+f_2$.

  (2) The composition $\partial\o \gr_\ell$ defines a surjection from
  $L(\La^{k-1}(\frak g/\frak p),\frak g)^\ell$ onto the subspace
  $\im(\partial)\subset C^k(\frak m,\gr(\frak g))_\ell$. Since these
  are finite dimensional vector spaces, we can choose a linear right
  inverse $\psi:\im(\partial)\to L(\La^{k-1}(\frak g/\frak p),\frak
  g)^\ell$ to this map. Since $f$ has values in $\im(\partial)$ we
  can, for each $i\in I$, consider the smooth map $\ps\o f\o
  \si_i:U_i\to L(\La^{k-1}(\frak g/\frak p),\frak g)^\ell$. As in the
  proof of part (1), this can be uniquely written as $h^i\o\si_i$ for
  a smooth, $P$--equivariant function $h^i:p^{-1}(U_i)\to
  L(\La^{k-1}(\frak g/\frak p),\frak g)^\ell$. By construction, we
  have $\partial\o \gr_\ell\o h^i=f$ along the image of $\si_i$ and
  hence on all of $p^{-1}(U_i)$ by equivariancy. As in part (1),
  $h=\sum_{i\in I}(\al_i\o p)h^i$ does the job.
\end{proof}

Having this at hand, we can prove the main result on normalizing
regular Cartan connections.

\begin{thm}\label{thm4.4}
  Let $(\frak g,P)$ and admissible pair and let $\Cal N$ be a
  normalization condition for $(\frak g,P)$. Let $(p:\Cal G\to M,\om)$
  be a regular Cartan geometry of type $(\frak g,P)$. Then there is a
  regular normal Cartan connection $\tilde\om$ on $\Cal G$, which
  induces the same underlying filtered $G_0$--structure (in the sense
  of Theorem \ref{thm2.4}) as $\om$.
\end{thm}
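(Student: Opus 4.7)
\textbf{Proof proposal for Theorem \ref{thm4.4}.}
The plan is an inductive normalization on homogeneity degree, following the well-tried pattern: at each step, use the normalization condition to split the current curvature modulo higher homogeneity, then absorb the $\partial$-exact part by modifying $\om$ with a suitable $1$-form, while the remainder is absorbed into a running $\Cal N$-valued approximation of the curvature function. I would inductively construct a sequence of regular Cartan connections $\om=\om^{(1)},\om^{(2)},\dots$ on $\Cal G$, each inducing the same underlying filtered $G_0$--structure as $\om$, together with $P$--equivariant smooth functions $n^{(\ell-1)}:\Cal G\to\Cal N$, such that the curvature function $\ka^{(\ell)}$ of $\om^{(\ell)}$ satisfies $\ka^{(\ell)}-n^{(\ell-1)}\in L(\La^2(\frak g/\frak p),\frak g)^\ell$. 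Start with $\om^{(1)}=\om$ and $n^{(0)}=0$, using regularity of $\om$ for the base case.

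For the inductive step, consider the $P$--equivariant smooth function $\Cal G\to C^2(\frak m,\gr(\frak g))_\ell$ obtained by applying $\gr_\ell$ to $\ka^{(\ell)}-n^{(\ell-1)}$. By Definition \ref{def-norm} the target space decomposes as $\gr_\ell(\Cal N)\oplus\im(\partial)$, so Lemma \ref{lem4.4}(1) gives a $P$--equivariant splitting into summands $a$ and $b$ with values in these two subspaces. A local-to-global argument analogous to Lemma \ref{lem4.4}(1) (using a complement of $\Cal N^{\ell+1}$ in $\Cal N^\ell$ and a partition of unity) provides a $P$--equivariant lift $\tilde a:\Cal G\to\Cal N^\ell$ with $\gr_\ell\tilde a=a$, while Lemma \ref{lem4.4}(2) applied to $b$ yields a $P$--equivariant $h:\Cal G\to L(\La^1(\frak g/\frak p),\frak g)^\ell$ with $\partial\gr_\ell h=b$. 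Let $\ph\in\Om^1_{\hor}(\Cal G,\frak g)^\ell_P$ be the $1$--form corresponding (via Theorem \ref{thm4.3}) to $-h$, set $\om^{(\ell+1)}:=\om^{(\ell)}+\ph$ and $n^{(\ell)}:=n^{(\ell-1)}+\tilde a$.

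By Proposition \ref{prop4.2}(2) (since $\ell\geq 1$), $\om^{(\ell+1)}$ is again a Cartan connection, and by Proposition \ref{prop4.2}(1) it induces the same underlying filtered $G_0$--structure as $\om$. Proposition \ref{prop4.2}(3) and Theorem \ref{thm4.3}(2) give
\begin{equation*}
\gr_\ell\bigl(\ka^{(\ell+1)}-\ka^{(\ell)}\bigr)=\gr_\ell\bigl(\text{function of }d^\om\ph\bigr)=\partial\gr_\ell(-h)=-b,
\end{equation*}
and the residual term lies in $L^{\ell+1}$; regularity of $\om^{(\ell+1)}$ also follows from Proposition \ref{prop4.2}(3). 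Consequently
\begin{equation*}
\gr_\ell\bigl(\ka^{(\ell+1)}-n^{(\ell)}\bigr)=\gr_\ell\bigl(\ka^{(\ell+1)}-\ka^{(\ell)}\bigr)+\gr_\ell\bigl(\ka^{(\ell)}-n^{(\ell-1)}\bigr)-\gr_\ell\tilde a=-b+(a+b)-a=0,
\end{equation*}
so $\ka^{(\ell+1)}-n^{(\ell)}\in L^{\ell+1}$, as required.

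The filtration of the finite-dimensional space $L(\La^2(\frak g/\frak p),\frak g)^1$ has finite length (the component $L^{\ell}$ becomes $\{0\}$ once $\ell$ exceeds $\nu+2\mu$), so the iteration terminates after finitely many steps at some $\om^{(N)}$ with $\ka^{(N)}=n^{(N-1)}$ taking values in $\Cal N$; this is the desired normal Cartan connection $\tilde\om$. The main technical point is the joint use of Lemma \ref{lem4.4} (to realize both the ``normal part'' and the $\partial$--exact part as $P$--equivariant functions) together with the fact, encoded in Theorem \ref{thm4.3}(2), that on the associated graded the covariant exterior derivative $d^\om$ becomes the Lie algebra cohomology differential $\partial$; once these tools are in place the inductive step is essentially forced by the complementarity requirement in Definition \ref{def-norm}.
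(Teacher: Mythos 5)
Your proposal is correct and follows essentially the same route as the paper's proof: an induction on homogeneity degree in which Lemma \ref{lem4.4} supplies the equivariant splitting and potential and Theorem \ref{thm4.3}(2) converts $d^\om\ph$ into $\partial$ on the associated graded; the paper merely phrases the inductive hypothesis as ``$\ka$ has values in $\Cal N+L(\La^2(\frak g/\frak p),\frak g)^\ell$'' and splits afresh via Lemma \ref{lem4.4}(1) with $E_1=\Cal N$ and $E_2=L(\La^2(\frak g/\frak p),\frak g)^\ell$, which spares it your extra lifting step producing $\tilde a$. The one point you pass over is that $\ka^{(\ell+1)}$ is formed using the frame $(\om^{(\ell+1)})^{-1}$ rather than $(\om^{(\ell)})^{-1}$, so one still has to check (as the paper does at the end of its proof, using $\hat\om^{-1}(A)=\om^{-1}(A)-\hat\om^{-1}(\ph(\om^{-1}(A)))$) that this change of frame alters the curvature function only by terms in $L(\La^2(\frak g/\frak p),\frak g)^{\ell+1}$ --- a routine verification, but it is not covered by Proposition \ref{prop4.2}(3) alone.
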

\begin{proof}
  We prove this via the following iterative construction. Suppose that
  $\om$ has the property that its curvature function $\ka$ has values
  in $\Cal N+L(\La^2(\frak g/\frak p),\frak g)^\ell$ for some
  $\ell\geq 1$. For $\ph\in\Om^1_{\hor}(\Cal G,\frak g)^\ell_P$ we
  then know from Proposition \ref{prop4.2} that $\hat\om=\om+\ph$ is a
  regular Cartan connection inducing the same underlying filtered
  $G_0$--structure as $\om$, and we construct $\ph$ in such a way that
  the curvature function $\hat\ka$ of $\hat\om$ has values in $\Cal
  N+L(\La^2(\frak g/\frak p),\frak g)^{\ell+1}$. Since the initial
  assumption is trivially satisfied for the initial Cartan connection
  $\om$ and $\ell=1$, we can iteratively apply this construction until
  we get a curvature function with values in $\Cal N+L(\La^2(\frak
  g/\frak p),\frak g)^{2\mu+\nu+1}=\Cal N$.

  So assume that the curvature function $\ka:\Cal G\to L(\La^2(\frak
  g/\frak p),\frak g)^1$ of $\om$ has values in $\Cal N+L(\La^2(\frak
  g/\frak p),\frak g)^\ell$. By part (1) of Lemma \ref{lem4.4}, we can
  write $\ka$ as a sum $\ka=\ka_1+\ka_2$ of two smooth,
  $P$--equivariant functions such that $\ka_1$ has values in $\Cal N$
  and $\ka_2$ has values in $L(\La^2(\frak g/\frak p),\frak
  g)^\ell$. Now consider the composition $\gr_\ell\o\ka_2:\Cal G\to
  C^2(\frak m,\gr(\frak g))_\ell$. By definition of a normalization
  condition, the target space splits into the direct sum of
  $\gr_\ell(\Cal N)$ and (its intersection with)
  $\im(\partial)$. Applying part (2) of Lemma \ref{lem4.4} to the
  negative of the $\im(\partial)$--component of $\gr_\ell\o\ka_2$, we
  obtain a $P$--equivariant smooth function $h:\Cal G\to L(\frak
  g/\frak p,\frak g)^\ell$. By construction this has the property that
  $\gr_\ell\o\ka_2+\partial\o\gr_\ell\o h$ has values in
  $\gr_{\ell}(\Cal N)\subset C^2(\frak m,\gr(\frak g))_\ell$.

  Now take the form $\ph\in\Om^1_{\hor}(\Cal G,\frak g)^\ell_P$
  corresponding to this function $h$ and put $\hat\om=\om+\ph$. By
  Proposition \ref{prop4.2}, the curvature $\hat K$ of $\hat\om$
  coincides with $K+d^\om\ph$ up to terms of homogeneity
  $\geq\ell+1$. Equivalently, the function which maps $A,B\in\frak g$
  to $\hat K(\om^{-1}(A),\om^{-1}(B))$ can be written, up to terms
  which are homogeneous of degree $\geq\ell+1$, as
$$
\ka_1(A,B)+\ka_2(A,B)+d^\om\ph(\om^{-1}(A),\om^{-1}(B)).
$$ 
Now the first summand has values in $\Cal N$, while the rest is
homogeneous of degree $\geq\ell$. Applying $\gr_\ell$ to that part, we
see from Theorem \ref{thm4.3} that we get
$\gr_\ell\o\ka_2+\partial\o\gr_\ell\o h$ which by construction lies in
$\gr_\ell(\Cal N)$. But this exactly means that the second part has
values in $\Cal N+L(\La^2(\frak g/\frak p),\frak g)^{\ell+1}$. Thus
also $(A,B)\mapsto \hat K(\om^{-1}(A),\om^{-1}(B))$ has values in that
subspace. But now $\hat\om(\om^{-1}(A))=\ph(A)$, so
$\hat\om^{-1}(A)=\om^{-1}(A)-\hat\om^{-1}(\ph(A))$ and hence the curvature
function $\hat\ka$ maps $(A,B)$ to
\begin{align*}
\hat K(\om^{-1}(A),&\om^{-1}(B))-\hat
K(\hat\om^{-1}(\ph(A)),\om^{-1}(B))\\&-K(\om^{-1}(A),\hat\om^{-1}(\ph(B)))+\hat
K(\hat\om^{-1}(\ph(A)),\hat\om^{-1}(\ph(B))).
\end{align*}
But for $A\in\frak g^i$ and $B\in\frak g^j$, the second and third term
have values in $\frak g^{i+\ell+j+1}$ and the last term even has
values in $\frak g^{i+j+2\ell+1}$. Hence we conclude that $\hat\ka$
has values in $\Cal N+L(\La^2(\frak g/\frak p),\frak g)^{\ell+1}$,
which completes the proof.
\end{proof}

\subsection{Uniqueness of normal Cartan connections}\label{4.5}
To prepare for the proof of uniqueness, consider an admissible pair
$(\frak g,P)$ and a regular Cartan geometry $(p:\Cal G\to M,\om)$ of
type $(\frak g,P)$. Suppose that $\Ph:\Cal G\to\Cal G$ is a smooth
homomorphism of principal bundles which covers the identity on
$M$. Then for each $u\in\Cal G$, the point $\Ph(u)$ lies in the same
fiber of $\Cal G$ as $u$, so there is an element $g(u)\in P$ such that
$\Ph(u)=u\cdot g(u)$, and smoothness of $\Ph$ implies smoothness of
$g:\Cal G\to P$. Moreover, since $\Ph(u\cdot h)=\Ph(u)\cdot h$ for all
$h\in P$, we must have $g(u\cdot h)=h^{-1}g(u)h$ for all $u\in\Cal G$
and $h\in P$. Conversely, if we assume that $g:\Cal G\to P$ is a
smooth map such that $g(u\cdot h)=h^{-1}g(u)h$, then $\Ph(u)=u\cdot
g(u)$ defines an automorphism of $\Cal G$ which covers the identity on
$M$.

A simple way how to construct such functions is via the Lie
algebra. Suppose that $Z:\Cal G\to \frak p=\frak g^0$ is a smooth
function such that for each $u\in\Cal G$ and $h\in P$ we get $Z(u\cdot
h)=\Ad(h^{-1})(Z(u))$. Then $g(u):=\exp(Z(u))$ defines a smooth
function $\Cal G\to P$ such that $g(u\cdot h)=h^{-1}g(u)h$ for all
$u\in\Cal G$ and $h\in P$. Apart from the linear structure, this also
has the advantage that we can require $Z$ to have values in one of the
subspaces $\frak g^i$ with $i>0$.

Next, observe that an automorphism $\Ph:\Cal G\to\Cal G$ of the
principal bundle $\Cal G$ is a diffeomorphism and satisfies $\Ph\o
r^g=r^g\o\Ph$ for all $g\in P$. This easily implies that for any such
automorphism, the pullback $\Ph^*\om$ of $\om$ is again a Cartan
connection of type $(\frak g,P)$. Now for automorphisms of the special
form constructed above, we can describe the relation between $\om$ and
$\Ph^*\om$.

\begin{lemma}\label{lemma4.5}
  Let $(\frak g,P)$ be an admissible pair and let $(p:\Cal G\to
  M,\om)$ be a Cartan geometry of type $(\frak g,P)$. For some
  $\ell>0$ let $Z:\Cal G\to \frak g^{\ell}$ be a smooth map, consider
  the principal bundle automorphism $\Ph:\Cal G\to\Cal G$ defined by
  $\Ph(u):=u\cdot\exp(Z(u))$ and the pullback $\Ph^*\om$ of $\om$. 

  Then for each $u\in \Cal G$, $i<0$, and $\xi\in T_u^i\Cal
  G$ we get $\Ph^*\om(u)(\xi)-\om(u)(\xi)\in\frak g^{i+\ell}$ and the
  class of this element in $\gr_{i+\ell}(\frak g)$ coincides with
  $-[\gr_\ell(Z(u)),\gr_i(\om(u)(\xi))]$.
\end{lemma}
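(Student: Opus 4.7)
The plan is to decompose the tangent map $T_u\Ph$ into a horizontal piece coming from right translation by $g(u):=\exp(Z(u))$ and a vertical piece recording how $g(u)$ varies with $u$, apply $\om$ to each piece using its defining properties, and then expand everything as a power series in $Z(u)$ to isolate the contributions of degree $i+\ell$.

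Explicitly, for a curve $c(t)$ in $\Cal G$ with $c(0)=u$ and $c'(0)=\xi$, write $\Ph(c(t))=c(t)\cdot g(u)\cdot h(t)$, where $h(t):=g(u)^{-1}g(c(t))$ satisfies $h(0)=e$. Differentiating at $t=0$ yields
$$T_u\Ph\cdot\xi = T_u r^{g(u)}\cdot\xi + \ze_{h'(0)}(\Ph(u)),$$
with $h'(0)$ equal to the pullback of the left Maurer--Cartan form of $P$ along $\exp\o Z$ evaluated on $\xi$. Using $P$--equivariancy of $\om$ on the first summand and the reproducing property on the second, this yields
$$\Ph^*\om(u)(\xi)=\Ad(\exp(-Z(u)))(\om(u)(\xi))+\frac{1-e^{-\ad Z(u)}}{\ad Z(u)}(\xi\cdot Z),$$
where $\xi\cdot Z$ denotes the derivative of $Z$ along $\xi$; this lies in $\frak g^\ell$ because $Z$ takes values in the linear subspace $\frak g^\ell$. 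Here I use the standard formula $T_X\exp=T_e L_{\exp X}\o \frac{1-e^{-\ad X}}{\ad X}$.

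The remaining work is a filtration estimate using the admissible pair axiom $[\frak g^j,\frak g^k]\subset\frak g^{j+k}$. Expanding $\Ad(\exp(-Z(u)))=e^{-\ad Z(u)}$ as a power series and inserting $Z(u)\in\frak g^\ell$, $\om(u)(\xi)\in\frak g^i$ gives
$$\Ad(\exp(-Z(u)))(\om(u)(\xi))\equiv \om(u)(\xi)-[Z(u),\om(u)(\xi)]\pmod{\frak g^{i+2\ell}},$$
and $\frak g^{i+2\ell}\subset\frak g^{i+\ell+1}$ since $\ell\ge 1$. For the Maurer--Cartan term, every summand of $\frac{1-e^{-\ad Z(u)}}{\ad Z(u)}(\xi\cdot Z)$ lies in $\frak g^\ell$, and since $i\le -1$ we have $\frak g^\ell\subset\frak g^{i+\ell+1}$, so this entire summand is absorbed. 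Combining the two gives
$$\Ph^*\om(u)(\xi)-\om(u)(\xi)\equiv -[Z(u),\om(u)(\xi)]\pmod{\frak g^{i+\ell+1}},$$
which lies in $\frak g^{i+\ell}$, and passing to the associated graded identifies its class in $\gr_{i+\ell}(\frak g)$ with $-[\gr_\ell(Z(u)),\gr_i(\om(u)(\xi))]$.

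The one genuinely non-trivial step is the derivation of the pullback formula: decomposing $T_u\Ph$ cleanly into the right translation and the fundamental vector field pieces, and recognizing the latter as governed by the pullback of the Maurer--Cartan form via $\exp\o Z$. Once this formula is in place, the filtration bookkeeping is essentially automatic from the filtered Lie algebra axioms, and the lemma follows.
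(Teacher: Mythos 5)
Your proof is correct and follows essentially the same route as the paper: the same decomposition of $T_u\Ph\cdot\xi$ into a right-translation part and a fundamental-vector-field part, the same observation that the vertical contribution lies in $\frak g^\ell\subset\frak g^{i+\ell+1}$, and the same first-order truncation of $e^{-\ad Z(u)}$ using $\ad(Z(u))^2(\frak g^i)\subset\frak g^{i+2\ell}$. The only difference is presentational: you compute the vertical term explicitly via the derivative of $\exp$ and the Maurer--Cartan form, where the paper is content with the rougher observation that it is $\ze_W(\Ph(u))$ for some $W\in\frak g^\ell$.
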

\begin{proof}
  As before, put $g(u)=\exp(Z(u))$ for all $u\in\Cal G$. Let $r:\Cal
  G\x P\to\Cal G$ be the principal right action and for $u\in\Cal G$
  and $h\in P$ consider the corresponding partial maps $r_u:P\to\Cal
  G$ and $r^h:\Cal G\to\Cal G$ defined by $r_u(h)=r^h(u)=u\cdot h$. By
  definition $\Ph=r\o(\id,g)$, so for $u\in\Cal G$ and $\xi\in T_u\Cal
  G$, we obtain
$$
T_u\Ph\cdot\xi=T_{(u,g(u))}r\cdot
(\xi,T_ug\cdot\xi)=T_ur^{g(u)}\cdot\xi+ T_{g(u)}r_u\cdot T_ug\cdot\xi,
$$ 
where in the last equality we have used that
$(\xi,T_ug\cdot\xi)=(\xi,0)+(0,T_ug\cdot\xi)$. The second summand in
this expression can be computed explicitly, compare with the proof of
Proposition 3.1.14 in \cite{book}, but for our purposes, a rough
description is sufficient. Since for $\ell>0$, the filtration
component $\frak g^\ell$ is a Lie subalgebra in $\frak g^0=\frak p$,
it generates a connected virtual Lie subgroup of $P$. By construction,
$g$ has values in this subgroup, which implies that for any $\xi$ the
tangent vector $T_ug\cdot\xi\in T_{g(u)}P$ can be realized
$\tfrac{d}{dt}|_{t=0}g(u)\cdot\exp(tW)$ for some $W\in\frak
g^\ell$. Acting by $T_{g(u)}r_u$ on that tangent vector, we get
$$
\tfrac{d}{dt}|_{t=0}u\cdot (g(u)\exp(tW))=\ze_W(u\cdot g(u))\in
T^\ell_{\Ph(u)}\Cal G. 
$$ 
This is mapped by $\om$ to $\frak g^\ell$, which for each $i<0$ is
contained in $\frak g^{i+\ell+1}$. Hence for $\xi\in T^i\Cal G$ with
$i<0$ we can compute $(\Ph^*\om)(u)(\xi)=\om(\Ph(u))(T_u\Ph\cdot\xi)$
up to terms in $\frak g^{i+\ell+1}$ as
$$
\om(u\cdot g(u))(T_ur^{g(u)}\cdot\xi)=\Ad(g(u)^{-1})(\om(u)(\xi)). 
$$ Since $g(u)=\exp(Z(u))$ we get $\Ad(g(u)^{-1})=e^{-\ad(Z(u))}$. By
assumption, we have $\om(u)(\xi)\in\frak g^i$, so since $Z\in\frak
g^\ell$ we get $\ad(Z(u))^2(\frak g^i)\subset\frak
g^{i+2\ell}\subset\frak g^{i+\ell+1}$. Hence ignoring terms in $\frak
g^{i+\ell+1}$, we can replace $e^{-\ad(Z(u))}$ by $(\id-\ad(Z(u)))$
which readily implies all claims of the lemma.
\end{proof}

Using this, we can now prove the basic result on uniqueness. 

\begin{thm}\label{thm4.5}
  Let $(\frak g,P)$ be an infinitesimal homogeneous model for filtered
  $G_0$--structures and let $\Cal N$ be a normalization condition for
  $(\frak g,P)$. Let $(p:\Cal G\to M,\om)$ be a regular normal Cartan
  geometry of type $(\frak g,P)$ and let $\hat\om$ be another normal
  Cartan connection on $\Cal G$, which induces the same underlying
  filtered $G_0$--structure as $\om$. Then there is an automorphism
  $\Ph$ of the principal $P$--bundle $\Cal G$ which induces the
  identity on the underlying $G_0$--bundle $\Cal G/P_+$ such that
  $\Ph^*\hat\om=\om$.
\end{thm}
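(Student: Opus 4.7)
The plan is to construct $\Phi$ iteratively as a composition $\Phi=\Phi_1\o\cdots\o\Phi_N$ of principal bundle automorphisms of the form $\Phi_\ell(u):=u\cdot\exp(Z_\ell(u))$, with each $Z_\ell\colon\Cal G\to\frak g^\ell$ smooth and $P$-equivariant in the sense $Z_\ell(u\cdot h)=\Ad(h^{-1})(Z_\ell(u))$. Equivariance makes $\Phi_\ell$ a principal bundle map (cf.\ the discussion preceding Lemma \ref{lemma4.5}), and since $\frak g^\ell\subset\frak g^1=\mathrm{Lie}(P_+)$ we have $\exp(Z_\ell(u))\in P_+$, so each $\Phi_\ell$ covers the identity on $\Cal G/P_+$. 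Normality and regularity are preserved under such pullbacks: the curvature function of $\Phi_\ell^*\om'$ is $\ka'\o\Phi_\ell$, which by $P$-equivariance of $\ka'$ and $P$-invariance of $\Cal N$ again takes values in $\Cal N$. By Proposition \ref{prop4.2}(1), the hypothesis that $\om$ and $\hat\om$ induce the same filtered $G_0$-structure yields $\hat\om-\om\in\Om^1_{\hor}(\Cal G,\frak g)^1_P$, initiating the induction at $\ell=1$.

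For the inductive step, assume $\om':=\Phi_{\ell-1}^*\o\cdots\o\Phi_1^*\hat\om$ is regular and normal with $\varphi:=\om'-\om\in\Om^1_{\hor}(\Cal G,\frak g)^\ell_P$, and let $f\colon\Cal G\to L(\frak g/\frak p,\frak g)^\ell$ be its corresponding function under Theorem \ref{thm4.3}(1). Proposition \ref{prop4.2}(3) gives $K'-K\equiv d^\om\varphi$ modulo homogeneity $\geq\ell+1$, and by Theorem \ref{thm4.3}(2) the function of $d^\om\varphi$ has leading part $\partial\o\gr_\ell f$. On the other hand, normality of $\om$ and $\om'$ forces the function of $K'-K$ (computed via $\om^{-1}$) to be $\Cal N$-valued modulo homogeneity $\geq\ell+1$, by the same calculation as at the end of the proof of Theorem \ref{thm4.4}; hence $\gr_\ell(K'-K)\in\gr_\ell(\Cal N)$. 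The complementarity $\gr_\ell(\Cal N)\cap\im(\partial)=\{0\}$ from Definition \ref{def-norm} then forces $\partial\o\gr_\ell f=0$, so $\gr_\ell f$ is pointwise a cocycle in $C^1(\frak m,\gr(\frak g))_\ell$. Since $(\frak g,P)$ is an infinitesimal homogeneous model, Proposition \ref{prop2.5} gives $H^1(\frak m,\gr(\frak g))_\ell=0$ for $\ell>0$, so $\gr_\ell f$ actually takes values in $\im(\partial)$, and Lemma \ref{lem4.4}(2) (applied with $k=1$) produces a smooth $P$-equivariant $Z_\ell\colon\Cal G\to\frak g^\ell$ satisfying $\partial\o\gr_\ell Z_\ell=-\gr_\ell f$.

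Applying Lemma \ref{lemma4.5} to $\om'$ (and noting $\gr_i\om'(u)(\xi)=\gr_i\om(u)(\xi)$ for $\xi\in T^i_u\Cal G$, since $\om'-\om$ has degree $\geq 1$), one finds that the function of $\Phi_\ell^*\om'-\om'$ has leading part $\partial\o\gr_\ell Z_\ell$ in degree $\ell$. By construction this cancels the leading part $\gr_\ell f$ of $\om'-\om$, so $\Phi_\ell^*\om'-\om$ is homogeneous of degree $\geq\ell+1$. The induction terminates once $\ell>\mu+\nu$, as then $\frak g^{i+\ell}=\{0\}$ for every $i\geq-\mu$ forces the difference to vanish identically; the resulting composition $\Phi$ satisfies $\Phi^*\hat\om=\om$ and covers the identity on $\Cal G/P_+$.

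The heart of the argument is the inductive step, where three distinct ingredients must align: normality (via the complementarity condition in Definition \ref{def-norm}) forces $\gr_\ell f$ to be a cocycle; the full-prolongation hypothesis ($H^1=0$) exactly upgrades this cocycle to a coboundary and thus guarantees the pointwise existence of $Z_\ell$; and Lemma \ref{lem4.4}(2) is required to promote this pointwise existence to a globally smooth $P$-equivariant function via a partition-of-unity argument built on a fixed linear right-inverse of $\partial$.
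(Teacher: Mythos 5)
Your proof is correct and follows essentially the same route as the paper's: the same inductive gauge transformation $u\mapsto u\cdot\exp(Z_\ell(u))$, with normality plus the complementarity $\gr_\ell(\Cal N)\cap\im(\partial)=\{0\}$ forcing $\gr_\ell f$ into $\ker(\partial)$, the full-prolongation hypothesis upgrading it to $\im(\partial)$, Lemma \ref{lem4.4}(2) supplying the equivariant $Z_\ell$, and Lemma \ref{lemma4.5} computing the effect of the automorphism. The only (harmless) cosmetic differences are that you make explicit the preservation of normality under the pullbacks and absorb the paper's separate case $\ell>\nu$ into the general step.
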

\begin{proof}
  From Proposition \ref{prop4.2}, we know that $\hat\om$ induces the
  same underlying filtered G--structure as $\om$ iff the difference
  $\hat\om-\om$ lies in $\Om^1_{\hor}(\Cal G,\frak g)^1_P$. We prove
  the theorem by a recursive construction. Assuming that
  $\hat\om-\om\in \Om^1_{\hor}(\Cal G,\frak g)^\ell_P$ for some
  $\ell\geq 1$, we construct an automorphism $\Ph$ of $\Cal G$
  inducing the identity on $\Cal G/P_+$ such that $\Ph^*\hat\om-\om\in
  \Om^1_{\hor}(\Cal G,\frak g)^{\ell+1}_P$. Since $\Ph$ induces the
  identity on $\Cal G/P_+$, the Cartan connection $\Ph^*\hat\om$
  induces the same underlying filtered $G_0$--structure as $\hat\om$
  and hence as $\om$. Thus we can iterate the argument, until we
  arrive at $\Ph^*\hat\om-\om\in \Om^1_{\hor}(\Cal G,\frak
  g)^{\mu+\nu+1}_P$, which is the zero space by homogeneity.

So let us assume that $\ph:=\hat\om-\om\in \Om^1_{\hor}(\Cal G,\frak
g)^\ell_P$ for some $\ell\geq 1$ and denote by $f:\Cal G\to L(\frak
g/\frak p,\frak g)^\ell$ the corresponding $P$--equivariant
function. By Proposition \ref{prop4.3}, the curvatures $K$ and $\hat
K$ have the properties that $\hat K-K-d^{\om}\ph$ is homogeneous of
degree $\geq\ell+1$. We have also seen in the Proof of Theorem
\ref{thm4.4} that the curvature function $\hat\ka$ differs from the
function $(A,B)\mapsto\hat K(\om^{-1}(A),\om^{-1}(B))$ by a function
which is homogeneous of degree $\geq\ell+1$. Using this and Theorem
\ref{thm4.3}, we conclude that $\hat\ka-\ka$ has values in
$L(\La^2(\frak g/\frak p),\frak g)^\ell$ and that
$\gr_\ell\o(\hat\ka-\ka)=\partial\o\gr_\ell\o f$. 

Now since both $\hat\om$ and $\om$ are normal, we see that
$\hat\ka-\ka$ has values in $\Cal N$. But by definition of a
normalization condition, $\gr_\ell(\Cal N)\cap\im(\partial)=\{0\}$, so
we conclude that $\gr_\ell\o f$ has values in $\ker(\partial)\subset
L(\frak m,\gr(\frak g))_\ell$. Since $\gr(\frak g)$ is the full
prolongation of $(\frak m,\gr_0(\frak g))$, this space coincides with
$\partial(\gr_\ell(\frak g))$. If $\ell>\nu$, then we directly get
$\gr_\ell\o f=0$, so $\ph$ actually is homogeneous of degree $\ell+1$
and iterating the argument, we conclude that $\hat\om=\om$ in this
case. 

If $\ell<\nu$, then applying part (2) of Lemma \ref{lem4.4} to
$-\gr_\ell\o f$, we obtain a smooth, $P$--equivariant function $Z:\Cal
G\to\frak g^\ell$ such that $\gr_\ell\o f=-\partial\o\gr_\ell\o Z$.
Now we define an automorphism $\Ph$ of $\Cal G$ as
$\Ph(u):=u\cdot\exp(Z(u))$ and form the pullback $\Ph^*\hat\om$. By
Lemma \ref{lemma4.5}, the difference $\Ph^*\hat\om-\hat\om$ is
homogeneous of degree $\geq\ell$ so the same holds for
$\Ph^*\hat\om-\om$. Denoting by $\tilde f$ the function corresponding
to $\Ph^*\hat\om-\hat\om$, Lemma \ref{lemma4.5} shows that
$\gr_\ell\o\tilde f=\partial\o\gr_\ell\o Z=-\gr_\ell\o f$. But this
exactly says that the composition of $\gr_\ell$ with the function
corresponding to $\Ph^*\hat\om-\om$ vanishes identically, so
$\Ph^*\hat\om-\om$ is homogeneous of degree $\geq\ell+1$, and this
completes the proof.
\end{proof}

\subsection{Canonical Cartan connections}\label{4.6} 
As our final result, we show that under an additional condition on
the group $P$, we get an equivalence of categories between filtered
$G_0$--structures and regular normal Cartan geometries. This
condition is satisfied in most of the examples that I am aware of.

\begin{definition}\label{def4.6}
  Let $(\frak g,P)$ be an admissible pair, let $P_+\subset P$ be the
  subgroup introduced in Section \ref{2.3} and put $G_0:=P/P_+$. Then
  we say that \textit{$P$ is of split exponential type} if there is a
  smooth homomorphism $\iota:G_0\to P$ such that the map $G_0\x \frak
  g^1\to P$  defined by $(g_0,Z)\mapsto \iota(g_0)\exp(Z)$ is a global
  diffeomorphism. 
\end{definition}

Observe that this in particular implies that $g_0=\iota(g_0)P_+$ for
each $g_0\in G_0$, so $\iota$ splits the quotient projection $P\to
G_0$. The splitting of this quotient projection is the main
requirement imposed by the condition. This follows since $\frak g^1$
is nilpotent by definition, so the exponential mapping always is a
diffeomorphism from $\frak g^1$ onto the universal covering of the
connected component of the identity of $P_+$.

Notice also that $P$ is of split exponential type for the examples
discussed in Section \ref{2.6}. In the case of vanishing prolongation,
we have $P=G_0$ and for parabolics this is a well known fact, see
Theorem 3.1.3 of \cite{book}, which also handles the case related to
(systems of) ODEs. Finally, also the general construction discussed in
Remark \ref{rem2.6} always produces a group $P$ which is of split
exponential type.

\begin{thm}\label{thm4.6}
  Let $(\frak g,P)$ be an infinitesimal homogeneous model for filtered
  $G_0$--structures such that $P$ is of split exponential type. Let
  $\Cal N$ be a normalization condition for $(\frak g,P)$. Then the
  category of regular normal Cartan geometries of type $(\frak g,P)$
  is equivalent to the category of filtered $G_0$--structures. More
  explicitly, we have

(1) Any filtered $G_0$--structure can be realized as the underlying
  structure of a regular normal Cartan geometry of type $(\frak g,P)$,
  which is uniquely determined up to isomorphism.
 
(2) For two regular normal Cartan geometries of type $(\frak g,P)$,
  any morphism between the underlying filtered $G_0$--structures lifts
  to a morphism of Cartan geometries. 
\end{thm}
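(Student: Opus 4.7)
The plan is to prove (1) and (2); the equivalence of categories then follows formally. The split exponential type assumption on $P$ enters at two places: extending principal $G_0$-bundles to principal $P$-bundles, and lifting principal bundle morphisms.

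For (1), given a filtered $G_0$-structure $\Cal G_0\to M$, I would first form the associated $P$-bundle $\Cal G:=\Cal G_0\times_{G_0}P$, where $G_0$ acts on $P$ from the left via the splitting $\iota$ of Definition \ref{def4.6}; the global diffeomorphism $G_0\times\frak g^1\to P$ identifies $\Cal G/P_+$ with $\Cal G_0$. Next I would produce a regular Cartan connection on $\Cal G$ inducing this $G_0$-structure. The filtered $G_0$-structure gives, on the subbundles $T^i\Cal G_0$ for $i<0$, partial $\frak m_i$-valued forms (as in Remark \ref{rem2.1}(3)); combining these with the natural $\frak p$-valued forms coming from the principal action, choosing splittings of the filtration of $\frak g$, and using local trivializations of $\Cal G$, one obtains genuine Cartan connections over each chart of a suitable open cover of $M$. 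Any two local Cartan connections inducing the given $G_0$-structure differ by a form in $\Om^1_{\hor}(\Cal G,\frak g)^1_P$ by Proposition \ref{prop4.2}(1), and adding such a form to a Cartan connection yields a Cartan connection by Proposition \ref{prop4.2}(2); hence a partition of unity argument on $M$ glues the local choices into a global Cartan connection $\om$ that induces the prescribed $G_0$-structure. Regularity of $\om$ is then automatic: the induced $G_0$-structure being of type $\frak m$ forces the associated graded of the curvature to vanish, as one sees from the computation appearing in the proof of Theorem \ref{thm2.4}. Theorem \ref{thm4.4} then replaces $\om$ by a regular normal $\tilde\om$ without changing the underlying structure, and Theorem \ref{thm4.5} supplies uniqueness up to isomorphism.

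For (2), given a morphism $F_0:\Cal G_0\to\tcg_0$ of filtered $G_0$-structures with base map $f$, I would define the lift $F:\Cal G\to\tcg$ using the above models by $F([u,p]):=[F_0(u),p]$. This is well-defined by the $G_0$-equivariance of $F_0$, is a $P$-equivariant principal bundle morphism covering $f$, and projects to $F_0$ under the quotients by $P_+$. Then $F^*\tilde\om$ is a regular normal Cartan connection on $\Cal G$, and since the passage to the underlying $G_0$-structure is functorial and $F$ projects to $F_0$, it induces on $M$ the same filtered $G_0$-structure as $\om$. Theorem \ref{thm4.5} supplies an automorphism $\Phi$ of $\Cal G$ covering the identity on $\Cal G_0$ with $\Phi^*F^*\tilde\om=\om$, and $F\circ\Phi:\Cal G\to\tcg$ is the desired morphism of Cartan geometries lifting $F_0$. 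Functoriality of the lift (hence the equivalence of categories) follows by applying Theorem \ref{thm4.5} to compositions of lifts and to identity lifts.

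The main obstacle will be the construction of the global regular Cartan connection on $\Cal G$ in part (1). The extension of the bundle is essentially formal given split exponential type, and normalization and uniqueness are handled by Theorems \ref{thm4.4} and \ref{thm4.5}. The delicate point is that the soldering data of the filtered $G_0$-structure lives only on subbundles of $T\Cal G_0$, so turning it into an honest $\frak g$-valued $1$-form on $\Cal G$ that is pointwise an isomorphism requires choices of complements to the filtration components of $\frak g$ and of a means to fill in the directions along the $P_+$-fibers; these choices must be patched globally in a $P$-equivariant way, and one must verify that any two local choices which both induce the given $G_0$-structure differ by a form in $\Om^1_{\hor}(\Cal G,\frak g)^1_P$, so that the partition of unity argument actually produces a Cartan connection rather than merely a $\frak g$-valued $1$-form.
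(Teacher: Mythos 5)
Part (1) of your proposal follows the paper's route essentially verbatim: the associated bundle $\Cal G_0\times_{G_0}P$ via $\iota$, local Cartan connections built from the soldering data, a splitting of the filtration and a choice of form in the fiber directions, gluing via Proposition \ref{prop4.2}, and then Theorems \ref{thm4.4} and \ref{thm4.5}. Your observation that regularity comes for free once the connection induces the given structure (because the frame map lands in $\Cal PM$, whose points are graded Lie algebra isomorphisms, so formula \eqref{bracket} kills the homogeneity--zero part of the curvature) is correct and is exactly how the paper concludes regularity. The delicate point you flag at the end is indeed the one the paper spends most of its effort on in (1).

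Part (2), however, has a genuine gap. The formula $F([u,p]):=[F_0(u),p]$ only makes sense when both Cartan bundles are presented as the associated bundles $\Cal G_0\times_{G_0}P$, but the statement concerns two \emph{arbitrary} regular normal Cartan geometries, whose total spaces are arbitrary principal $P$--bundles. To reduce to your model you would first have to show that any principal $P$--bundle $\Cal G$ with a given identification $\Cal G/P_+\cong\Cal G_0$ is isomorphic, over $\id_{\Cal G_0}$, to $\Cal G_0\times_{G_0}P$ --- equivalently, that any morphism of the underlying principal $G_0$--bundles lifts to a morphism of principal $P$--bundles. This is not ``essentially formal'': it is precisely the remaining content of the paper's proof of (2), where (after reducing to base map $\id_M$ via the pullback $f^*\tcg$) local lifts are built from local sections and $\iota$, and then glued using bump functions together with the fact that the $P_+$--valued discrepancy between two local lifts can be written as $\exp(Z)$ for a $P$--equivariant function $Z$ with values in $\frak g^1$ --- the second, and more essential, place where split exponential type enters. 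Theorem \ref{thm4.5} cannot substitute for this step, since it compares two connections on the \emph{same} bundle; without the lifting argument you cannot even assert that an arbitrary regular normal Cartan geometry is isomorphic to one built on your model bundle. So you should add this lifting lemma (local existence plus an $\exp$--based gluing) to complete part (2); with it, the rest of your argument (pulling back $\tilde\om$, checking it is regular normal with the same underlying structure, and correcting by the automorphism from Theorem \ref{thm4.5}) matches the paper.
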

\begin{proof}
  (1) Let $(M,\{T^iM\})$ be a filtered manifold which is regular of
  type $\frak m$ and let $p_0:\Cal G_0\to M$ define a filtered
  $G_0$--structure. This means that $\Cal G_0$ is a principal bundle
  with structure group $G_0$ and comes with a homomorphism to the
  frame bundle of $\gr(TM)$ which covers the identity on $M$. Hence to
  each point $u_0\in\Cal G$ we can associate a family of linear
  isomorphisms $\phi_i(u):\gr_i(T_{p_0(u)}M)\to\gr_i(\frak g)$ for
  $i=-\mu,\dots,-1$, which depend smoothly on $u$ in a way compatible
  with the $G_0$--actions.

Next, let $\iota:G_0\to P$ be a smooth homomorphism as in Definition
\ref{def4.6}. Via $\iota$, $G_0$ acts on $P$ by left multiplication
and we take the associated bundle $p:\Cal G:=\Cal G_0\x_{G_0}P\to
M$. This is well known to be a principal $P$--bundle and the first
projection induces a well defined smooth map $q:\Cal G\to\Cal G_0$
which descends to an isomorphism $\Cal G/P_+\to\Cal G_0$. Thus we have
realized $\Cal G_0$ globally as a quotient of a principal
$P$--bundle. For later use, we choose a principal connection $\ga$ on
$p:\Cal G\to M$ and for $u\in\Cal G$, we denote by $H_u$ the
horizontal subspace $\ker(\ga(u))$. We also fix a splitting
$j:\gr(\frak g)\to\frak g$ of the filtration of $\frak g$ as in the
proof of Lemma \ref{lem3.1}.

Suppose that $U\subset M$ is an open subset over which the bundle
$\Cal G_0$ is trivial. Then also $\Cal G$ is trivial over $U$ and we
claim that there is a regular Cartan connection $\om_U$ on
$p^{-1}(U)\subset\Cal G$ which induces the filtered $G_0$--structure
$(p_0)^{-1}(U)\to U$.

To see this, take a smooth section $\si:U\to\Cal G$ and the induced
section $\si_0:=q\o\si:U\to \Cal G_0$. For $x\in U$, the point
$\si_0(x)\in\Cal G_0$ determines linear isomorphisms $\gr_i(T_xM)\to
\gr_i(\frak g)$ for each $i=-\mu,\dots,-1$. Fixing bases of the spaces
$\gr_i(\frak g)$, this gives rise to smooth local frames of the
bundles $\gr_i(TM)$ over $U$ all $i=-\mu,\dots,-1$. Now for each $i$,
we can choose sections of $T^iM$ lifting the elements of that
frame. From this construction, it follows readily that the resulting
sections for all $i$ together are linearly independent in each point
and thus form a frame for $TM$ defined over $U$.

For $x\in U$, we define a map $\ps_x:T_xM\to \frak g$ by requiring
that for an element $\xi$ in the frame for $TM$ corresponding to a
basis element $X\in\gr_i(\frak g)$ the tangent vector $\xi(x)$ is
mapped to $j(X)\in\frak g^i$. By construction, composing the
projection $\frak g\to\frak g/\frak p$ with $\ps_x$, one obtains a
linear isomorphism, so in particular $\ps_x$ is injective. It also
follows readily that $\ps_x(T^i_xM)\subset\frak g^i$ for each
$i=-\mu,\dots,-1$ and that for a smooth vector field $\xi$ on $U$, the
map $U\to\frak g$ defined by $x\mapsto \ps_x(\xi)$ is smooth, too.
Now for each $x\in U$, we define $\tilde\ps_x:T_{\si(x)}\Cal G\to
\frak g$ by
$$
\tilde\ps_x(\xi):=\ps_x(T_{\si(x)}p\cdot\xi)+\ga(\xi)(\si(x)).
$$ 
We first observe that $\tilde\ps_x(\xi)\in\frak p$ if and only if
$T_{\si(x)}p\cdot\xi=0$ and hence $\xi$ is vertical. But on
vertical vector fields, $\ga$ is injective, so $\tilde\ps_x$ is
injective and thus a linear isomorphism. It also follows readily from
the definition that $\tilde\ps_x(\ze_A(\si(x)))=A$ for all $A\in\frak
p$. Finally, it is clear by construction that for a smooth vector
field $\xi$ on $p^{-1}(U)$, the map $U\to\frak g$ defined by $x\mapsto
\ps_x(\xi(\si(x)))$ is smooth. Now observe that
$(x,g)\mapsto\si(x)\cdot g$ defines a global diffeomorphism $U\x P\to
p^{-1}(U)\subset\Cal G$. This easily implies that
$$
\om(\si(y)\cdot
g)(\xi):=\Ad(g^{-1})(\tilde\ps_y(T_{\si(y)\cdot g}r^{g^{-1}}\cdot\xi)),
$$ 
defines a form $\om\in\Om^1(p^{-1}(U),\frak g)_P$ which is uniquely
characterized by equivariancy and the fact that
$\om(\si(x))=\tilde\ps_x$ for all $x\in U$. It readily follows that
the values of $\om$ all are linear isomorphisms and equivariancy of
fundamental vector fields implies that $\om(\ze_A)=A$ for all
$A\in\frak p$, so $\om$ is a Cartan connection on $p^{-1}(U)$. 

Along $\si(U)$, $\om$ by construction has the property that
$\om(\xi)\in\frak g^i$ for some $i<0$ if and only if $Tp\cdot\xi\in
T^iM$. By equivariancy of $\om$, this remains true on all of
$p^{-1}(U)$, so $\om$ induces the given filtration $\{T^iM\}$ of
$TM$. Moreover, if
$\om(\si(x))(\xi)=\tilde\ps_x(T_{\si(x)}p\cdot\xi)\in\frak g^i$ then
the class of this element in $\gr_i(\frak g)$ is obtained by taking
the class of $T_{\si(x)}p\cdot\xi$ in $\gr_i(T_xM)$ and mapping it to
$\gr_i(\frak g)$ via the linear isomorphism corresponding to
$\si_0(x)=q(\si(x))\in\Cal G_0$. But since
$T_{\si(x)}p=T_{\si_0(x)}p_0\o T_{\si(x)}q$ we conclude that
$\om(\si(x))$ induces the isomorphisms determined by $\si_0(x)$ via
the construction from Theorem \ref{thm2.4}. Since $q\o r^g=r^{gP_+}\o
q$ for all $g\in P$, equivariancy implies that $\om(u)$ induces the
isomorphisms corresponding to $q(u)\in \Cal G_0$ for all $u\in
p^{-1}(U)$, so $\om$ is regular and induces the given filtered
$G_0$--structure. This completes the proof of the claim.

By topological dimension theory (see Section 1.2 in \cite{GHV}), the
bundle $\Cal G_0$ admits a finite atlas, so we can find (possibly
disconnected) open subsets $U_i\subset M$ for $i=1,\dots,N$ such that
$M=U_1\cup\dots\cup U_N$ and such that $\Cal G_0$ is trivial over each
$U_i$. Since $M$ is a normal topological space, we further find open
subsets $V_i\subset M$ such that $\overline{V_i}\subset U_i$ for each
$i$ and such that $M=V_1\cup\dots\cup V_N$. By the last step, we can
find a regular Cartan connection $\om_i\in\Om^1(p^{-1}(U_i),\frak g)$
inducing the given filtered $G_0$--structure on $M$ for each $i$. Now
over $U_{12}:=U_1\cap U_2$ we have the Cartan connections $\om_1$ and
$\om_2$ which induce the same underlying filtered G--structure, so
$\phi:=\om_2|_{p^{-1}(U_{12})}-\om_1|_{p^{-1}(U_{12})}\in\Om^1_{\hor}(p^{-1}(U_{12}))^1_P$
by Proposition \ref{prop4.2}. Now choose a bump function $f:M\to
[0,1]$ with support contained in $U_2$, which is identically one on
$V_2$. Then $u\mapsto f(p(u))\ph(u)$ extends by zero to an element of
$\Om^1_{\hor}(p^{-1}(U_1))^1_P$, so $\om_1(u)+f(p(u))\ph(u)$ defines a
regular Cartan connection on $p^{-1}(U_1)$ which induces the given
filtered $G_0$--structure on $U_1$. But on $p^{-1}(U_1\cap V_2)$, this
Cartan connection by construction coincides with $\om_2$, so together
these two forms define a Cartan connection on $p^{-1}(U_1\cup V_2)$
which induces the given filtered $G_0$--structure.  Similarly, one
next extends the Cartan connection to $U_1\cup V_2\cup V_3$ and so on,
and in finitely many steps, one obtains a regular Cartan connection on
$\Cal G$ inducing the given given filtered $G_0$--structure. Now part
(1) follows from Theorems \ref{thm4.4} and \ref{thm4.5}.

\smallskip

(2) Let $(p:\Cal G\to M,\om)$ and $(\tilde p:\tcg\to\tilde
M,\tilde\om)$ be regular normal Cartan geometries of type $(\frak
g,P)$ and suppose that $F:\Cal G/P_+\to \tcg/P_+$ is a morphism of
filtered $G_0$--structures with base map $f:M\to\tilde M$. In
particular, this means that $f$ is a local diffeomorphism, see Section
\ref{2.1}. Now we can consider the pullback $f^*p:f^*\tcg\to M$, which
is a principal $P$--bundle. By construction, this comes with a
morphism $p^*f:f^*\tcg\to\tcg$ of principal bundles with base map
$f:M\to\tilde M$.

We can also form the pullback bundle $f^*(\tcg/P_+)\to M$, which is a
principal bundle with structure group $G_0$. By the universal property
of this pullback, we get a map $f^*\tcg\to f^*(\tcg/P_+)$ covering the
identity on $M$, which induces an isomorphism $(f^*\tcg)/P_+\cong
f^*(\tcg/P_+)$. Also by this universal property, the bundle map
$F:\Cal G/P_+\to\tcg/P_+$ induces a homomorphism $\Cal G/P_+\to
f^*(\tcg/P_+)$ which covers the identity on $M$ and thus is an
isomorphism of principal $G_0$--bundles. Now we claim that it suffices
to lift this to a homomorphism $\Cal G\to f^*\tcg$ to complete the
proof.

Having done that, we compose with $p^*f$ to obtain a homomorphism
$\hat F:\Cal G\to \tcg$ of principal bundles with base map $f$, which
by construction lifts $F:\Cal G/P_+\to\tcg/P_+$. Since $\hat F$ is
$P$--equivariant and $f$ is a local diffeomorphism, we readily
conclude that $\hat\om:=\hat F^*\tilde\om$ is a Cartan connection on
$\Cal G$. Naturality of the exterior derivative implies that for the
curvature of $\hat\om$ we get $\hat K=\hat F^*\tilde K$, where $\tilde
K$ is the curvature of $\tilde\om$. This in turn means that the
curvature functions are related by $\hat\ka=\tilde\ka\o\hat F$,
compare with Section 1.5.2 of \cite{book}.

This readily implies that all values of $\hat\ka$ are homogeneous of
degree $\geq 1$ and lie in $\Cal N$, so $\hat\om$ is regular and
normal. Finally, the fact that $\hat F$ lifts $F$, which by assumption
is a morphism of filtered $G_0$--structures exactly says that $\hat
F^*\tilde\om$ and $\om$ induce the same underlying filtered
$G_0$--structure on $M$. Hence we can apply Theorem \ref{thm4.5} to
obtain an automorphism $\Ph:\Cal G\to\Cal G$ of the principal
$P$--bundle $\Cal G$ which induces the identity on $\Cal G/P_+$ such
that $\Ph^*\hat\om=\om$. But this exactly says that $\hat F\o\Ph:\Cal
G\to\tcg$ is a morphism of Cartan geometries lifting $F:\Cal
G/P_+\to\tcg/P_+$.

\smallskip

So it remains to prove the following claim (for which we change the
notation slightly): Suppose that $\Cal G\to M$ and $\tcg\to M$ are
principal $P$--bundles. Then for any homomorphism $\ps:\Cal
G/P_+\to\tcg/P_+$ of principal $G_0$--bundles with base map $\id_M$
there is a lift to a homomorphism $\Ps:\Cal G\to\tcg$ of principal
$P$--bundles.

Observe first that this is true locally on any subset $U\subset M$
over which both $\Cal G$ and $\tcg$ are trivial. Indeed, let
$\si:U\to\Cal G$ and $\tilde\si:U\to\tcg$ be smooth sections and
consider the induced sections $\si_0:U\to\Cal G/P_+$ and
$\tilde\si_0:U\to\tcg/P_+$. Since $\ps$ has base map $\id_M$, there is
a smooth function $g_0:U\to G_0$ such that
$\ps(\si_0(u))=\hat\si_0(u)\cdot g_0(u)$. Now let $\iota:G_0\to P$ be
a smooth homomorphism as in Definition \ref{def4.6}, and define
$\Ps:\Cal G|_U\to\tcg|_U$ by $\Ps(\si(u)\cdot g):=\tilde\si(u)\cdot
(\iota(g_0(u))g)$ for all $u\in U$ and $g\in P$. By construction, this
is $P$--equivariant, has base map $\id_M$ and induces $\ps$ on the
underlying $G_0$--bundles.

Having this result at hand, we can complete the proof as for part (1).
There is a finite open covering $U_1,\dots,U_N$ of $M$ such that both
$\Cal G$ and $\tcg$ are trivial over each $U_i$, so in particular, we
have a section $\si_i$ of $\Cal G$ over each $U_i$. We also choose
open subsets $V_i$ such that $\bar V_i\subset U_i$ and
$M=V_1\cup\dots\cup V_N$. By the last step, we find a lift $\Ps_i$ of
$\ps$ over each $U_i$. Now over $U_{12}:=U_1\cap U_2$, there is a
smooth function $g:U_{12}\to P_+$ such that $\Ps_2(u)=\Ps_1(u)\cdot
g(u)$. Equivariancy of $\Ps_2$ and $\Ps_1$ shows that $g(u\cdot
h)=h^{-1}g(u)h$ for any $h\in P$. Since $P$ is of split exponential
type, we get $g(u)=\exp(Z(u))$ for a smooth function $Z:U_{12}\to\frak
g^1$ such that $Z(u\cdot h)=\Ad(h^{-1})(Z(u))$ for all $h\in P$. Now
let $f$ be a bump function with support contained in $U_2$, which is
identically one on $V_2$. Then we can extend $u\mapsto f(p(u))Z(u)$ by
zero to a smooth function $M\to\frak g^1$, so in particular $u\mapsto
\Ps_1(u)\exp(f(p(u))Z(u))$ is a principal bundle homomorphism over
$U_1$ lifting $\ps$. But over $U_1\cap V_2$, this coincides with
$\Ps_2$, so we can piece them together to obtain a lift of $\ps$
defined on $U_1\cup V_2$. Iterating this finitely many times, we reach
a global lift of $\ps$.
\end{proof}

\begin{bibdiv}
\begin{biblist}

\bib{Alekseevsky-David}{article}{
   author={Alekseevsky, Dmitri V.},
   author={David, Liana},
   title={Tanaka structures (non holonomic $G$-structures) and Cartan
   connections},
   journal={J. Geom. Phys.},
   volume={91},
   date={2015},
   pages={88--100},
   issn={0393-0440},
   review={\MR{3327050}},
   doi={10.1016/j.geomphys.2015.01.018},
}

\bib{Beastwood}{book}{
   author={Baston, Robert J.},
   author={Eastwood, Michael G.},
   title={The Penrose transform},
   series={Oxford Mathematical Monographs},
   note={Its interaction with representation theory;
   Oxford Science Publications},
   publisher={The Clarendon Press, Oxford University Press, New York},
   date={1989},
   pages={xvi+213},
   isbn={0-19-853565-1},
   review={\MR{1038279}},
}

\bib{Bryant}{article}{
   author={Bryant, Robert},
   title={Conformal geometry and $3$--plane fields on $6$--manifolds}, 
   booktitle={Developments of Cartan Geometry and Related Mathematical
     Problems, RIMS Symposium Proceedings, vol.\ 1502}, 
   date={July, 2006},
   pages={1--15}, 
   eprint={ arXiv:math/0511110},
}

\bib{twistor}{article}{
   author={{\v{C}}ap, Andreas},
   title={Correspondence spaces and twistor spaces for parabolic geometries},
   journal={J. Reine Angew. Math.},
   volume={582},
   date={2005},
   pages={143--172},
   issn={0075-4102},
   review={\MR{2139714 (2006h:32017)}},
   doi={10.1515/crll.2005.2005.582.143},
}

\bib{CDT}{article}{
   author={{\v{C}}ap, Andreas},
   author={Doubrov, Boris},
   author={The, Dennis},
   title={On C--class equations}, 
   eprint={arXiv:1709.01130}, 
}

\bib{Cap-Schichl}{article}{
   author={\v Cap, Andreas},
   author={Schichl, Hermann},
   title={Parabolic geometries and canonical Cartan connections},
   journal={Hokkaido Math. J.},
   volume={29},
   date={2000},
   number={3},
   pages={453--505},
   issn={0385-4035},
   review={\MR{1795487}},
   doi={10.14492/hokmj/1350912986},
}

\bib{book}{book}{ 
author={{\v{C}}ap, Andreas}, 
author={Slov{\'a}k, Jan}, 
title={Parabolic geometries. I}, 
series={Mathematical Surveys and Monographs}, 
volume={154}, 
note={Background and general theory}, 
publisher={American Mathematical Society},
place={Providence, RI}, 
date={2009}, 
pages={x+628},
isbn={978-0-8218-2681-2}, 
review={\MR{2532439 (2010j:53037)}}, 
}

\bib{Cartan:five}{article}{
author={Cartan, \'Elie},
title={Les syst\`emes de Pfaff a cinq variables
et les \'equations aux deriv\'ees partielles du second ordre},
journal={Ann. Ec. Normale},
volume={27},
date={1910},
pages={109--192}, 
}

\bib{Cartan:CR}{article}{
   author={Cartan, Elie},
   title={Sur la g\'eom\'etrie pseudo-conforme des hypersurfaces de l'espace de
   deux variables complexes},
   language={French},
   journal={Ann. Mat. Pura Appl.},
   volume={11},
   date={1933},
   number={1},
   pages={17--90},
   review={\MR{1553196}},
   doi={10.1007/BF02417822},
}

\bib{Chern}{article}{
   author={Chern, Shiing-shen},
   title={The geometry of the differential equation $y'''=F(x,y,y',y'')$},
   journal={Sci. Rep. Nat. Tsing Hua Univ. (A)},
   volume={4},
   date={1940},
   pages={97--111},
   review={\MR{0004538}},
}

\bib{Chern-Moser}{article}{
   author={Chern, S. S.},
   author={Moser, J. K.},
   title={Real hypersurfaces in complex manifolds},
   journal={Acta Math.},
   volume={133},
   date={1974},
   pages={219--271},
   issn={0001-5962},
   review={\MR{0425155}},
   doi={10.1007/BF02392146},
}

\bib{deZanet}{thesis}{
author={de Zanet, Chiara},
title={Generic one-step bracket generating distributions of rank
  four},
type={PhD thesis}, 
organization={University of Vienna},
year={2016},
eprint={http://othes.univie.ac.at/41905/},
}

\bib{DKM}{article}{
   author={Doubrov, B.},
   author={Komrakov, B.},
   author={Morimoto, T.},
   title={Equivalence of holonomic differential equations},
   note={Towards 100 years after Sophus Lie (Kazan, 1998)},
   journal={Lobachevskii J. Math.},
   volume={3},
   date={1999},
   pages={39--71},
   issn={1818-9962},
   review={\MR{1743131}},
}

\bib{FG-amb}{article}{
   author={Fefferman, Charles},
   author={Graham, C. Robin},
   title={Conformal invariants},
   note={The mathematical heritage of \'Elie Cartan (Lyon, 1984)},
   journal={Ast\'erisque},
   date={1985},
   number={Num\'ero Hors S\'erie},
   pages={95--116},
   issn={0303-1179},
   review={\MR{837196}},
}

\bib{GHV}{book}{
   author={Greub, Werner},
   author={Halperin, Stephen},
   author={Vanstone, Ray},
   title={Connections, curvature, and cohomology. Vol. I: De Rham cohomology
   of manifolds and vector bundles},
   note={Pure and Applied Mathematics, Vol. 47},
   publisher={Academic Press, New York-London},
   date={1972},
   pages={xix+443},
   review={\MR{0336650}},
}

\bib{Kostant}{article}{
   author={Kostant, Bertram},
   title={Lie algebra cohomology and the generalized Borel-Weil theorem},
   journal={Ann. of Math. (2)},
   volume={74},
   date={1961},
   pages={329--387},
   issn={0003-486X},
   review={\MR{0142696}},
}

\bib{Morimoto:subRiem}{article}{
   author={Morimoto, Tohru},
   title={Cartan connection associated with a subriemannian structure},
   journal={Differential Geom. Appl.},
   volume={26},
   date={2008},
   number={1},
   pages={75--78},
   issn={0926-2245},
   review={\MR{2393974}},
   doi={10.1016/j.difgeo.2007.12.002},
}

\bib{Morimoto:Cartan}{article}{
   author={Morimoto, Tohru},
   title={Geometric structures on filtered manifolds},
   journal={Hokkaido Math. J.},
   volume={22},
   date={1993},
   number={3},
   pages={263--347},
   issn={0385-4035},
   review={\MR{1245130}},
   doi={10.14492/hokmj/1381413178},
}

\bib{Sharpe}{book}{
   author={Sharpe, R. W.},
   title={Differential geometry},
   series={Graduate Texts in Mathematics},
   volume={166},
   note={Cartan's generalization of Klein's Erlangen program;
   With a foreword by S. S. Chern},
   publisher={Springer-Verlag, New York},
   date={1997},
   pages={xx+421},
   isbn={0-387-94732-9},
   review={\MR{1453120}},
}

\bib{Tanaka:CR}{article}{
   author={Tanaka, Noboru},
   title={On the pseudo-conformal geometry of hypersurfaces of the space of
   $n$\ complex variables},
   journal={J. Math. Soc. Japan},
   volume={14},
   date={1962},
   pages={397--429},
   issn={0025-5645},
   review={\MR{0145555}},
   doi={10.2969/jmsj/01440397},
}

\bib{Tanaka:prolon}{article}{
   author={Tanaka, Noboru},
   title={On differential systems, graded Lie algebras and pseudogroups},
   journal={J. Math. Kyoto Univ.},
   volume={10},
   date={1970},
   pages={1--82},
   issn={0023-608X},
   review={\MR{0266258}},
   doi={10.1215/kjm/1250523814},
}

\bib{Tanaka:simple}{article}{
   author={Tanaka, Noboru},
   title={On the equivalence problems associated with simple graded Lie
   algebras},
   journal={Hokkaido Math. J.},
   volume={8},
   date={1979},
   number={1},
   pages={23--84},   issn={0385-4035},
   review={\MR{533089}},
   doi={10.14492/hokmj/1381758416},
}

\bib{Yamaguchi}{article}{
   author={Yamaguchi, Keizo},
   title={Differential systems associated with simple graded Lie algebras},
   conference={
      title={Progress in differential geometry},
   },
   book={
      series={Adv. Stud. Pure Math.},
      volume={22},
      publisher={Math. Soc. Japan, Tokyo},
   },
   date={1993},
   pages={413--494},
   review={\MR{1274961}},
}

\bib{Zelenko}{article}{
   author={Zelenko, Igor},
   title={On Tanaka's prolongation procedure for filtered structures of
   constant type},
   journal={SIGMA Symmetry Integrability Geom. Methods Appl.},
   volume={5},
   date={2009},
   pages={Paper 094, 21},
   issn={1815-0659},
   review={\MR{2559667}},
   doi={10.3842/SIGMA.2009.094},
}

\end{biblist}
\end{bibdiv}

\end{document}